 \def\Dj{\hbox{D\kern-.73em\raise.30ex\hbox{-} \raise-.30ex\hbox{}}}
 \def\dj{\hbox{d\kern-.33em\raise.80ex\hbox{-} \raise-.80ex\hbox{\kern-.40em}}}
\def\<{\langle}                     
\def\>{\rangle}                     
\newtheorem{thm}{Theorem}[section]
\newtheorem{lem}[thm]{Lemma}
\newtheorem{cor}[thm]{Corollary}
\newcommand{\ben}{\begin{enumerate}}
\newcommand{\een}{\end{enumerate}}
\theoremstyle{plain}
\theoremstyle{definition}
\newtheorem{rem}{Remark}[section]
\numberwithin{equation}{section}
\begin{document}
	\title[Maximum Induced Matching Numbers of Grids]{On Maximum Induced Matching Numbers of Special Grids}
	
	\author[T.C Adefokun]{Tayo Charles Adefokun$^1$ }
	\address{$^1$Department of Computer and Mathematical Sciences,
		\newline \indent Crawford University,
		\newline \indent Nigeria}
	\email{tayoadefokun@crawforduniversity.edu.ng}
	
	\author[D.O. Ajayi]{Deborah Olayide Ajayi$^2$}
	\address{$^2$Department of Mathematics,
		\newline \indent University of Ibadan,
		\newline \indent Ibadan,
		\newline \indent Nigeria}
	\email{olayide.ajayi@mail.ui.edu.ng; adelaideajayi@yahoo.com}

	\keywords{Induced Matching, Grids, Maximum Induced Matching Number, Strong Matching Number\\
		\indent 2010 {\it Mathematics Subject Classification}. Primary: 05C70, 05C15 }
	
	\begin{abstract} A subset $M$ of the edge set of a graph $G$ is an induced matching of $G$  if given any two $e_1,e_2 \in M$, none of the vertices on $e_1$ is adjacent to any of the vertices on $e_2$. Suppose that $MIM_G$, a positive integer, is the largest possible size of $M$ in $G$, then, $M$ is the maximum induced matching, $MIM$, of $G$ and $MIM_G$ is the maximum induced matching number of $G$. We obtain some upper bounds for the maximum induced matching numbers of some specific grids.
		
	\end{abstract}
		
\maketitle		
	
	\section{Introduction}
	
	For a graph $G$, let $V(G),E(G)$ be vertex and edge sets respectively and let $e \in E(G)$, we define $e=uv$, where $u,v \in V(G)$. Also, the respective order and size of $V(G)$ and $E(G)$ are $|V(G)|$ and $|E(G)|$. For some $M \subseteq E(G)$, $M$ is an induced matching of $G$ if for all $e_1=u_iu_j$ and $e_2=v_iv_j$ in $M$, $u_kv_l \notin M$, where $k$ and $l$ are from $\left\{i,j\right\}$. Induced matching, a variant of the matching problem, was introduced in 1982 by Stockmeyer and Vazirani \cite{SV1} and has also been studied under the names strong matchings, "risk free" marriage problem.  It has found theoretical and practical applications in a lot of areas including network problems and cryptology \cite{CST1}. For more on induced matching and its applications, see \cite{C1},\cite{CST1}, \cite{DD1}, \cite{E1} \cite{Z1}.
	
	The size of an induced matching is the number of edges in the induced matching and induced matching $M$ of $G$ with the largest possible size is known as the maximum induced matching which is denoted by $MIM$, its size, $MIM_G$, is called the maximum induced matching number (or the strong matching number) of $G$. Obtaining $MIM_G$  is $NP-$hard, even for regular bipartite graphs \cite{DD1}. However, $MIM_G$ of some graphs have been found in polynomial time (\cite{CST1}, \cite{GL1}).
	
 	A grid $G_{n,m}$ results from the Cartesian product of two paths $P_n$ and $P_m$, resulting in $n$-rows and $m$-colums. Marinescu-Ghemaci in \cite{RMG1}, obtained the $MIM_G$ for $G_{n,m}$, grid where both $n, m$ are even; either of $n$ and $m$ is even and a mumber of grids $G_{n,m}$ for which $nm$ is odd, which is called the odd grid \cite{AA1}. Marinescu-Ghemaci also gave useful lower and upper bounds and conjectured that the $MIM$ numbers of grids can be found in polynomial time. Furthermore, by combining the $MIM$ numbers of certain partitions of odd grids, it was shown that for any odd grid $G \equiv G_{n,m}$, $MIM_G \leq \left\lfloor \frac{nm+1}{4} \right\rfloor$. This bound was improved on in \cite{AA1} for the case where $n\geq 9$ and $m \equiv 1 \mod 4$.
	In this paper, the Marinescu-Ghemaci bound for the case where $n\geq 9$ and $m \equiv 3 \mod 4$ is considered and more compact values are obtained. The results in this work, combined with some of the results in \cite{RMG1}, confirm the $MIM$ numbers of certain graphs, whose $MIM$ numbers' lower bounds were established in \cite{RMG1}.

Section 2, of this work, is a review of definitions and preliminary results needed in this work, while in section 3, we obtain the maximum induced matching number of odd grids.

	\section{Definitions and Preliminary Results}
	Grid, $G_{n,m}$, as defined in this work, is the Cartesian product of paths $P_n$ and $P_m$ with $V(P_n)=\left\{u_1,u_2,\cdots, u_n\right\}$ and $V(P_m)=\left\{v_1,v_2,\cdots, v_m\right\} $. We adapt the following notations from \cite{AA1}: $V_i=\left\{u_1v_i, u_2v_i, \cdots, u_nv_i\right\} \subset V(G_{n,m}), i \in [1,m]$ and $U_i=\left\{u_iv_1, u_iv_2, \cdots, u_iv_m\right\}\subset V(G_{n,m}), i\in [1,n]$. For edge set $E(G_{n,m})$ of $G_{n,m}$, if $u_iv_j u_kv_j \in E(G_{n,m})$ and $u_iv_j u_iv_k \in E(G_{n,m})$, we write $u_{(^{i}_{k})}v_j \in E(G_{n,m})$ and $u_iv_{(^{j}_{k})} \in \in E(G_{n,m})$ respectively.
	
	A saturated vertex $v$ is any vertex on an edge in $M$, otherwise, $v$ is unsaturated. We define $v$ as saturable if it can be saturated relative to the nearest saturated vertex. Any vertex that is at least distant-$2$ from any saturated vertex is saturable.  The set of all saturated and saturable vertices on a graph $G$ is denoted by $V_{st}{G}$ and $V_{sb}(G)$ respectively. Clearly, $|V_{st}(G)|$ is even and $V_{st}(G) \subseteq V_{sb}(G)$. Free saturable vertices ($FSV$) are saturable vertices that can not be on any member of $M$, $FSV=V_{sb} \backslash V_{st}$. Let $G$ be a $G_{n,m}$ grid, we define $G^{|k|}$ as a $G_{n,k}$ subgrid of $G$ induced by $V_{i+1},V_{i+2},\cdots,V_{i+k}$.

The following results from \cite{RMG1} on $G$, a $G_{n,m}$ grid, are useful in this work:
\begin{lem}\label{lem2.1} Let $m,n \geq 2$ be two positive integers and let $G$ be a $G_{n,m}$ grid. Then,
\begin{enumerate}
\item
If $m \equiv 2 \mod 4$ and $n$ odd then $|V_{sb}(G)|=\frac{mn+2}{2}$; and $|V_{sb}(G)|=\frac{mn}{2}$  otherwise;
\item for $m\geq 3$, $m$ odd, $|V_{sb}(G)|=\frac{nm+1}{2}$, for $n\in \left\{3,5\right\}$.
\end{enumerate}
\end{lem}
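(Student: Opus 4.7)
The bounds have the shape $mn/2$ with small boundary corrections, which strongly suggests a tiling argument. My approach is to partition $V(G_{n,m})$ into small blocks and bound $|V_{sb}(G)|$ locally. The natural first choice is $2\times 2$ blocks: within such a block, any two vertices are at distance at most $2$, so at most one edge of an induced matching has both endpoints in the block, and by combining this with the distance-$2$ constraint separating saturated vertices from free saturable ones, each block is shown to contain at most two elements of $V_{sb}(G)$. Summing over all blocks, whenever the grid tiles perfectly (in particular when $4\mid mn$), this gives $|V_{sb}(G)|\le mn/2$, which handles the generic ``otherwise'' case of (a).

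For $m\equiv 2\pmod 4$ with $n$ odd, a $2\times 2$ tiling of $G_{n,m}$ leaves exactly one uncovered row of length $m$ along one side. I would analyze this leftover strip separately: because each of its vertices must sit at distance $\ge 2$ from the saturated vertices in the interior tiles, the strip admits at most two extra elements of $V_{sb}(G)$, which accounts exactly for the $+2$ correction and yields $\tfrac{mn+2}{2}$. For part (b), the dimensions $n\in\{3,5\}$ are small enough that I would slice the grid into column strips of width $2$ (so tiles of shape $n\times 2$), plus one leftover column of length $n\le 5$. In each $n\times 2$ tile a direct check shows at most $n$ vertices of $V_{sb}(G)$, and the leftover odd column, constrained by the neighbouring tile, is verified by hand to contribute exactly one additional vertex, giving $\tfrac{nm+1}{2}$.

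Matching upper bound with a construction: to show each value is achieved, I would exhibit an explicit induced matching using the standard shifted-brick pattern, placing edges on row pairs $(1,2),(5,6),(9,10),\ldots$ with their columns staggered by $2$ between consecutive pairs, then enumerate the additional free saturable vertices this configuration admits near the boundary rows and columns. The main obstacle I anticipate is not the construction side but the upper bound: adjacent $2\times 2$ tiles are not independent, because a free saturable vertex in one tile can lie at distance exactly $2$ from a saturated vertex in the next. A naive per-tile count therefore overcounts, and I expect the clean argument either to use wider $2\times 4$ tiles (where the inter-tile interaction localizes cleanly) or to introduce a weighting scheme that charges each vertex of $V_{sb}(G)$ fractionally to its surrounding blocks; pinning down the boundary corrections to exactly $+2$ in (a) and $+1$ in (b), rather than off by a small additive constant, is where the bulk of the casework will live.
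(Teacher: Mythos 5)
There is no proof of this lemma in the paper for your argument to be measured against: the statement is quoted verbatim as a known result of Marinescu-Ghemeci \cite{RMG1} (``The following results from \cite{RMG1} \dots are useful in this work''), so any proposal here has to stand entirely on its own as a proof of the equalities. Yours does not yet do so, and the gap is not only the deferred casework you flag at the end.

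The central step --- that every $2\times 2$ block contains at most two vertices of $V_{sb}(G)$ --- is not a local fact and fails as stated. By the paper's definition, $V_{sb}(G)$ contains every vertex at distance at least $2$ from all saturated vertices; for a sparse induced matching a whole $2\times 2$ block (indeed a whole region) far from every matched edge consists of saturable vertices, and for the empty matching $|V_{sb}(G)|=mn$. So the bound $|V_{sb}(G)|\le mn/2$ cannot be obtained by summing an unconditional per-tile bound: it must use, globally, that the matching in question is (or can be extended to) a maximum one, i.e.\ that large pockets of mutually distance-$\ge 2$ saturable vertices would force additional matched edges. That is precisely the content you postpone to ``$2\times 4$ tiles or a weighting scheme,'' and it is where the actual proof lives, together with the exact boundary corrections $+2$ in (a) and $+1$ in (b). Finally, the lemma asserts equalities, so the achieving configurations (your ``shifted-brick'' construction and the count of the extra free saturable vertices it leaves) also need to be carried out, not just indicated. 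As written, the proposal is a plausible plan whose key inequality is false in the generality in which you invoke it, so it does not yet constitute a proof of the lemma.
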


\begin{thm}\label{thm2.2} Let $G$ be a $G_{n,m}$ grid with $2\leq n \leq m$. Then,
\begin{enumerate}
\item if $n$ even and $m$ even or odd, then $MIM_G=\left\lceil \frac{mn}{4}\right\rceil$;
\item if $n \in \left\{3,5\right\}$ then for
\begin{enumerate}
\item $ m \equiv 1 \mod 4$, $MIM_G=\frac{n(m-1)}{4}+1$
\item $m \equiv 3 \mod 4$, $MIM_G=\frac{n(m-1)+2}{4}$
\end{enumerate}
\end{enumerate}

\end{thm}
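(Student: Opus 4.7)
The argument splits into proving the upper and lower bounds separately. For the upper bound, the key observation is that every edge in an induced matching contributes exactly two saturated vertices, so $2\,MIM_G=|V_{st}(G)|$, and the containment $V_{st}(G)\subseteq V_{sb}(G)$ gives $MIM_G\leq\lfloor |V_{sb}(G)|/2\rfloor$. Feeding in Lemma~\ref{lem2.1}: in part (a), $|V_{sb}(G)|$ equals $mn/2$ when $n$ is even and $m$ is even (or $n\equiv 0\pmod 4$ and $m$ odd), and equals $(mn+2)/2$ in the symmetric case $n\equiv 2\pmod 4$ with $m$ odd (using $G_{n,m}\cong G_{m,n}$); in both situations $\lfloor|V_{sb}|/2\rfloor$ evaluates to $\lceil mn/4\rceil$. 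In part (b)(i), $|V_{sb}(G)|=(nm+1)/2$ and $\lfloor(nm+1)/4\rfloor=n(m-1)/4+1$ for both $n=3$ and $n=5$ when $m\equiv 1\pmod 4$. In part (b)(ii) the floor gives the correct bound for $n=3$, while for $n=5,\;m\equiv 3\pmod 4$ the saturability count overestimates by one edge and a sharper argument is required.

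For the lower bound, the plan is to construct explicit induced matchings realizing the claimed values. In part (a), partition the rows into $n/2$ consecutive pairs and, within each pair, place vertical edges $u_{2i-1}v_j\,u_{2i}v_j$ at columns $j$ of a fixed residue modulo $4$, shifting the residue between adjacent row-pairs so that edges from different pairs are at vertex-distance at least $2$; a direct count yields $\lceil mn/4\rceil$ edges, with one extra edge placed in the last column when $n\equiv 2\pmod 4$ and $m$ is odd. In part (b), one uses short templates on $G_{n,4}$ of density $1/4$, tiles as many complete $G_{n,4}$-strips as possible, and then modifies the last block of columns according to $m\bmod 4$, picking up an additional edge from a free saturable vertex pair near the boundary in the $m\equiv 1\pmod 4$ case and settling for one less edge when $m\equiv 3\pmod 4$.

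The principal obstacle is the sharp upper bound in part (b)(ii) for $n=5$ and $m\equiv 3\pmod 4$, where Lemma~\ref{lem2.1} alone leaves room for one extra edge; the missing $-1$ has to be extracted from a careful analysis of edges near the short sides, for instance by a column-by-column case analysis on the last four columns that rules out every configuration that would saturate every saturable vertex simultaneously. The constructions themselves are routine, but the main bookkeeping burden is verifying that consecutive edges in the tilings remain pairwise at vertex-distance at least $2$ and that the boundary modifications preserve this property — in particular, that no two edges belonging to adjacent $G_{n,4}$-blocks or to the boundary adjustment share a neighbour.
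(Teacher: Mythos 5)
There is no proof of this statement in the paper for you to be measured against: Theorem~\ref{thm2.2} is quoted verbatim from Marinescu-Ghemeci \cite{RMG1} as a preliminary result, so your proposal has to stand on its own. On its own terms, the skeleton is reasonable — the inequality $2\,MIM_G=|V_{st}(G)|\leq|V_{sb}(G)|$ combined with Lemma~\ref{lem2.1} does give the stated values in part (a) and in part (b)(i) (your arithmetic there checks out, modulo the caveat that $V_{sb}$ is defined relative to a matching, so Lemma~\ref{lem2.1} must be read as a bound valid for every induced matching), and the lower bounds by explicit tilings are believable even though you never verify the distance-$2$ condition or the boundary adjustments in detail.

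The genuine gap is exactly the point you flag and then defer: part (b)(ii) with $n=5$, $m=4k+3$. The theorem asserts $MIM_G=5k+3$, while your saturable-vertex count gives only $MIM_G\leq 5k+4$, and the general bound of Theorem~\ref{thm2.4} gives the same $5k+4$; so the entire content of that case is the missing ``$-1$'', and writing that it ``has to be extracted from a careful analysis of edges near the short sides'' is a statement of the problem, not a proof. Nothing in your sketch rules out an induced matching of size $5k+4$; a column-by-column analysis of the last four columns alone will not do it, since the obstruction is global (an extra edge anywhere forces a cascade of unsaturable vertices along the whole strip — compare how much machinery this paper needs in Section~3, Lemmas~\ref{lema1}--\ref{lema4}, just to control analogous configurations in $G_{5,m}$). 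Until that upper bound is actually established, the claimed equality in (b)(ii) for $n=5$ is unproven, and the proposal is incomplete at its hardest point.
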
	
The following theorem is the statement of the bound given by Marinescu-Ghemaci \cite{RMG1}.

\begin{thm}\label{thm2.4} Let $G$ be a $G_{n,m}$ grid, $m,n \geq 2$, $mn$ odd. Then $MIM_G \leq \left\lfloor \frac{mn+1}{4}\right\rfloor$.
\end{thm}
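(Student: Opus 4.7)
The plan is to bound $|M|$ via the saturable-vertex count. Since $M$ is an induced matching and each edge contributes two disjoint endpoints to $V_{st}(G)$, we have $|V_{st}(G)|=2|M|$; together with the definitional inclusion $V_{st}(G)\subseteq V_{sb}(G)$ this yields $2|M|\le|V_{sb}(G)|$. The task then reduces to showing
$$|V_{sb}(G)|\le\frac{nm+1}{2}\qquad (n,m\ge 2,\ nm\text{ odd}),$$
after which the integrality of $|M|$ delivers $|M|\le\lfloor(nm+1)/4\rfloor$.

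For the base cases $n\in\{3,5\}$ (and symmetrically $m\in\{3,5\}$), the saturable-vertex bound is exactly Lemma~\ref{lem2.1}(b), which gives the sharp equality $|V_{sb}(G)|=(nm+1)/2$. For the remaining situation in which both $n,m\ge 7$ are odd, I would proceed by induction on the smaller dimension, say $n$. The inductive step writes $G_{n,m}$ as the gluing of the smaller odd-odd grid $G_{n-2,m}$ (on rows $1,\dots,n-2$) with a $2\times m$ strip $H$ (on rows $n-1,n$). The inductive hypothesis gives $|V_{sb}(G_{n-2,m})|\le((n-2)m+1)/2$, and Lemma~\ref{lem2.1}(a) in its ``otherwise'' clause (applicable since $H$ has even first dimension $2$) gives $|V_{sb}(H)|\le m$. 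Provided the decomposition of saturable vertices respects the horizontal cut, summing the two contributions yields
$$|V_{sb}(G_{n,m})|\le\frac{(n-2)m+1}{2}+m=\frac{nm+1}{2},$$
completing the induction.

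The main obstacle is justifying the decomposition of saturable vertices across the cut. Restricting the induced matching $M$ of $G_{n,m}$ to each piece produces an induced matching there, but edges of $M$ that span the cut (vertical edges between rows $n-2$ and $n-1$) contribute one saturated endpoint to each piece, and one has to argue that these boundary endpoints do not create additional saturable vertices beyond the counts provided by Lemma~\ref{lem2.1}. This reduces to a local case analysis along the two adjacent boundary rows, using the distance-two condition that defines saturability; it is precisely here that the odd parity of both $n$ and $m$ plays a delicate role, ensuring a single ``$+1$'' correction rather than a larger one. Verifying this uniformly --- in particular handling the configurations in which a cut-crossing edge of $M$ is present --- is the crux of the argument.
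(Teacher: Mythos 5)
The paper itself contains no proof of Theorem~\ref{thm2.4}: it is quoted verbatim from \cite{RMG1}, where (as the introduction notes) it is obtained by combining the known $MIM$ numbers of suitable partitions of the odd grid. Your argument therefore has to stand on its own, and it has a genuine gap at its core. The quantity $V_{sb}(G)$ is defined relative to the matching $M$ (a vertex is saturable once it is at distance at least $2$ from every saturated vertex), so the statement you propose to reduce to, namely $|V_{sb}(G)|\le\frac{nm+1}{2}$ for every induced matching of an odd grid, is false: for $M=\emptyset$ every vertex is saturable and $|V_{sb}(G)|=nm$, and already in $G_{3,3}$ a single corner edge leaves six saturated-or-saturable vertices, exceeding $(9+1)/2=5$. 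The constants in Lemma~\ref{lem2.1} are the extremal values attached in \cite{RMG1} to maximum configurations; they are not bounds valid for the restriction of an arbitrary matching of the large grid to a subgrid. This puts your induction in a dilemma: if the inductive statement is quantified over all induced matchings of $G_{n-2,m}$ it is false, and if it is restricted to maximum ones it cannot be applied to the restriction of $M$ to rows $1,\dots,n-2$, which need not be maximum there.

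Even granting subadditivity across the cut (which does hold, since distances only grow in the pieces and endpoints of cut-crossing edges remain saturable in their piece), the arithmetic does not close. For the $2\times m$ strip with $m$ odd the relevant value is $m+1$, not $m$: the induced matching consisting of the vertical edge in every odd column of the strip is an induced matching even of the whole grid and saturates $m+1$ vertices of the strip, and $m+1=\frac{2m+2}{2}$ is exactly what the first clause of Lemma~\ref{lem2.1}(a) gives once the strip is read as a grid with one side $2\equiv 2 \bmod 4$ and the other side odd; your appeal to the ``otherwise'' clause uses the orientation in which the quoted formula is not the true value (compare $MIM(G_{2,m})=\frac{m+1}{2}$ from Theorem~\ref{thm2.2}(a)). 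With $m+1$ the inductive step yields $\frac{(n-2)m+1}{2}+m+1=\frac{nm+3}{2}$, and iterating from the base case the naive sum overshoots the target by exactly one per strip. Hence the ``single $+1$ correction'' you defer to a local case analysis is not a routine verification but the entire content of the theorem: one must show that along every cut one saturated-or-saturable vertex is necessarily lost, uniformly over the possible cut-crossing configurations, and nothing in the sketch does this. As written, the proposal establishes no bound as strong as $\left\lfloor\frac{nm+1}{4}\right\rfloor$.
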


\section{Maximum induced matching number of odd grids}
The following result and the remark describe the importance of the saturation status of certain vertices in $G_{5,p}$ grid, where $p \equiv 2\mod 4$.

\begin{lem}\label{lema1} Let $G$ be a $G_{n,m}$ grid and let $\left\{V_i,V_{i+1},\cdots, V_{i+p}\right\} \subset G$ induce $G^{|p|}$, a $G_{5,p}$ subgrid of $G$, where $p \equiv 2\;mod\;4$. Suppose that $M_1$, is an induced matching of $G^{|p|}$ and that for $u_3v_i \in V(G^{|p|})$, $u_3v_i \notin V_{st}(G^{|p|})$. Then, $V_{st}(G^{|p|}) \leq 10k+4$ and $M_1$ is not an $MIM$ of $G^{|p|}$.
\end{lem}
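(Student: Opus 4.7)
Write $p = 4k+2$. My plan is to combine the saturability bound of Lemma~\ref{lem2.1} with the exact $MIM$ value from Theorem~\ref{thm2.2} and a parity observation. Applying Lemma~\ref{lem2.1}(1) to $G^{|p|} = G_{5,p}$, with $n = 5$ odd and $m = p \equiv 2 \mod 4$, gives
\[
|V_{sb}(G^{|p|})| = \frac{5p+2}{2} = 10k+6,
\]
so every induced matching $M$ of $G^{|p|}$ satisfies $|V_{st}(M)| \leq 10k+6$. On the other hand, Theorem~\ref{thm2.2}(a) (applicable because $G^{|p|}$ has an even dimension) yields
\[
MIM_{G^{|p|}} = \left\lceil 5p/4 \right\rceil = 5k+3.
\]
Since $2(5k+3) = 10k+6$, the saturability bound is tight, and any MIM of $G^{|p|}$ must saturate every saturable vertex.

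Next I would use the hypothesis $u_3 v_i \notin V_{st}(G^{|p|})$. Because $u_3 v_i$ participates in some induced matching of $G^{|p|}$ (for instance, it is the endpoint of the horizontal edge $u_3 v_i u_3 v_{i+1}$), it lies in the saturable set counted by Lemma~\ref{lem2.1}. The tightness established in the previous paragraph then forces every MIM to saturate $u_3 v_i$; since $M_1$ does not, $M_1$ misses at least one element of $V_{sb}(G^{|p|})$, so $|V_{st}(M_1)| \leq 10k+5$. Because $|V_{st}(M_1)|$ is even while $|V_{sb}(G^{|p|})| = 10k+6$ is also even, the deficit must be even, which upgrades the bound to $|V_{st}(M_1)| \leq 10k+4$. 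Consequently $|M_1| \leq 5k+2 < 5k+3 = MIM_{G^{|p|}}$, proving both assertions of the lemma.

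\textbf{Main obstacle.} The delicate step is justifying that $u_3 v_i$ must appear in every tight saturation pattern, i.e.\ that the extremal configurations counted by Lemma~\ref{lem2.1} genuinely force $u_3 v_i$ to be saturated rather than swapped out for some other vertex. If the equality $|V_{sb}(G^{|p|})| = 2\,MIM_{G^{|p|}}$ is not on its own decisive, I would fall back to a block decomposition of $G^{|p|}$: take the two boundary columns containing $u_3 v_i$ (a $G_{5,2}$ block with $MIM = 3$) together with $k$ successive $G_{5,4}$ blocks (each with $MIM = 5$), and then verify by a short case analysis restricted to the boundary block that any induced matching of $G^{|p|}$ which does not saturate $u_3 v_i$ contributes at most two edges there, yielding the desired total bound $|M_1| \leq 5k+2$ directly.
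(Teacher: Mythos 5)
Your main argument has a genuine gap at its central step. In this paper $V_{sb}$ is defined relative to a matching (the saturated vertices together with the unsaturated vertices at distance at least $2$ from every saturated vertex), so there is no fixed set of $10k+6$ vertices containing $V_{st}(M)$ for every induced matching $M$. In particular, the inference ``$u_3v_i$ participates in some induced matching, hence it lies in the saturable set counted by Lemma~\ref{lem2.1}'' cannot be right as stated: every vertex of $G^{|p|}$ lies on some induced matching, which under that reading would force $|V_{sb}(G^{|p|})|=5p$, contradicting the value $10k+6$. Consequently the tightness $|V_{sb}(G^{|p|})|=2\,MIM_{G^{|p|}}$ does not by itself force any particular vertex such as $u_3v_i$ to be saturated by every $MIM$ --- that is exactly what the lemma asserts, so the argument is circular at the point you yourself flag as the ``main obstacle,'' and the step ``$M_1$ misses one element of $V_{sb}$, hence $|V_{st}(M_1)|\le 10k+5$'' is unjustified.

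The paper closes this gap by localizing the loss: it splits $G^{|p|}$ into the two columns $\{V_i,V_{i+1}\}$ and the remaining $G_{5,4k}$, observes that with $u_3v_i$ unsaturated the $G_{5,2}$ block contributes at most $5$ (rather than $6$) saturable vertices, adds the bound $10k$ for the $G_{5,4k}$ part, and then invokes parity of $|V_{st}|$ to get $10k+4<10k+6$, the count needed for an $MIM$. Your fallback decomposition is in this spirit, but as written it counts edges inside each block (at most $2$ in the boundary $G_{5,2}$, at most $5$ in each $G_{5,4}$) and ignores matching edges whose two endpoints lie in consecutive blocks; such crossing edges belong to no block's induced subgrid, so the per-block $MIM$ values do not bound $|M_1|$. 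Counting saturated/saturable vertices per column block, as the paper does, absorbs crossing edges automatically (each endpoint is counted in its own block) and also delivers the quantitative claim $|V_{st}(G^{|p|})|\le 10k+4$, which an edge count would not give directly.
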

\begin{proof} Let $p=4k+2$, $G^{|2|}$ and $G^{|p-2|}$ be partitions of $G_1$, induced by $\left\{V_i, V_{i+1}\right\}$ and $\left\{V_{i+2}, V_{i+3}, \cdots V_{i+p}\right\}$, respectively. Since $u_3v_i$ is not saturated in $G^{|2|}$, it easy to check that $|V_{sb}(G^{|2|})|=5$. From \cite{RMG1}, $|V_{sb}(G^{|p-2|})|=|V_{st}(G^{|p-2|})|=10k$. Thus $|V_{sb}(G^{|p|})| \leq |V_{sb}(G^{|2|})+V_{sb}(G^{|m-2|})| \leq 10k+5$ and therefore, $|V_{st}(G^{|p|})|=10k+4$ since $|V_{st}(G)|$ is even, for any graph $G$. This is a contradiction since by \cite{RMG1} $|V_{st}(G^{|p|})|=10k+6$.
\end{proof}

\begin{rem}\label{rema1} It should be noted that $M$ in Lemma \ref{lema1} will still not be $MIM$ of $G$ if for the vertex set  $A=\left\{u_1v_1,u_5v_1,u_1v_m,u_3v_m,u_5v_m\right\} \subset V(G)$, any member of $A$ is unsaturated.
\end{rem}

\begin{lem}\label{lema2} Suppose $u_{(^1_2)}v_i$,$u_5v_{\left(^{i-1}_i\right)} \in M$ or $u_{(^1_2)}v_i$,$u_5v_{\left(^i_{i+1}\right)} \in M$ where $M$ is an induced matching of $G$, a $G_{5,m}$ grid, $m \equiv 3 \mod 4$, $m \geq 23$ and $1<i<m$, $i \notin \left\{4, m-3\right\}$. Then $M$ is not a $MIM$ of G.
 \end{lem}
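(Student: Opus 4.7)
The plan is to argue by contradiction: assume $M$ is an MIM of $G$, so by Theorem~\ref{thm2.2}(b) (with $n=5,\ m=4t+3$) we have $|M|=MIM_G=5t+3$. I will construct a strict upper bound $|M|\leq 5t+2$ by isolating a $G_{5,p}$ subgrid with $p \equiv 2 \pmod 4$ on which Lemma~\ref{lema1} or Remark~\ref{rema1} applies, and analyzing the complement under the constraints imposed by the prescribed edges. The two halves of the hypothesis are mirror images under column reversal, so I focus on case one: $u_{\binom{1}{2}} v_i,\ u_5 v_{\binom{i-1}{i}} \in M$.

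My first step is to record the forced non-saturation in the vicinity of column~$i$. The induced-matching condition forces $u_3 v_i$, $u_4 v_i$, $u_4 v_{i-1}$, $u_1 v_{i\pm 1}$, $u_2 v_{i\pm 1}$, $u_5 v_{i-2}$, $u_5 v_{i+1}$ all outside $V_{st}(G)$, since each is adjacent to an endpoint of one of the prescribed edges. By enumerating the four edges incident to $u_3 v_{i-1}$, three of them are blocked by these adjacencies, leaving the dichotomy: either $u_3 v_{i-1}$ is unsaturated, or it is saturated only by $u_3 v_{\binom{i-2}{i-1}}$; this dichotomy influences the choice of $G^*$.

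Next, depending on $i \pmod 4$ (and on which branch of the dichotomy holds), I would choose a subgrid $G^* = G^{|p|}$ with $p \equiv 2 \pmod 4$ that contains the prescribed edges entirely (so that no edge of $M$ straddles the $G^*$-boundary), and in which the middle vertex $u_3$ of the first column -- or a corner vertex of the set $A$ from Remark~\ref{rema1} -- is forced unsaturated. The exclusions $i \notin \{4,\, m-3\}$, together with $m \geq 23$, are exactly what guarantee that such a $G^*$ can be positioned inside $G$ with flanking subgrids of valid column counts and residues; at the excluded values, a flanking subgrid would be forced into a degenerate size or residue that breaks the decomposition.

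Applying Lemma~\ref{lema1} (or Remark~\ref{rema1}) yields $|M \cap E(G^*)| \leq 5k + 2 < 5k + 3 = MIM_{G^*}$, where $p = 4k+2$. Bounding the restrictions of $M$ on the flanks via Theorem~\ref{thm2.2} (using that the prescribed edges also constrain the maximum achievable matching in the flank adjacent to $G^*$), summation gives $|M| \leq 5t+2$, contradicting $|M| = 5t+3$. The main obstacle will be the case analysis by $i \pmod 4$ combined with the $u_3 v_{i-1}$ dichotomy: in the branch where $u_3 v_{i-1}$ is saturated by $u_3 v_{\binom{i-2}{i-1}}$ one must shift $G^*$ by a column, and the exclusion $i \neq 4$ (respectively $i \neq m-3$ in case two under mirror reversal) is precisely what prevents the shifted forced-unsaturation block from running past the grid boundary.
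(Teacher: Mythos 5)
Your opening moves (the forced-unsaturation list around column $i$ and the dichotomy that $u_3v_{i-1}$ is either unsaturated or saturated only via $u_3v_{(^{i-2}_{i-1})}$) do match the start of the paper's Case 1, but the counting scheme you build on them has two genuine gaps. First, partitioning the columns into $G^*$ and two flanks and then bounding $|M|$ by $|M\cap E(G^*)|$ plus the MIMs of the flanks is not a valid inequality: edges of $M$ may straddle the cuts, and your parenthetical claim that choosing $G^*$ to contain the two prescribed edges prevents any edge of $M$ from crossing the $G^*$-boundary is a non sequitur --- nothing stops other, unspecified edges of $M$ from lying horizontally across the cut columns. This is exactly why the paper counts saturated vertices rather than edges: every saturated vertex lies in exactly one block, so the straddling problem disappears, the flank contributions are controlled by the saturable counts $|V_{sb}|$ of Lemma~\ref{lem2.1} and \cite{RMG1}, and the target becomes $|V_{st}(G)|\le 10k+4<10k+6$.

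Second, even granting no straddling, your arithmetic does not close. With $m=4t+3$ and $p=4k+2$, the flanks have $4(t-k)+1$ columns in total, and for every admissible residue split their MIMs (from Theorem~\ref{thm2.2}) sum to $5(t-k)+1$; adding your bound $5k+2$ for $G^*$ gives exactly $5t+3=MIM_G$, not $5t+2$. Put differently, the naive three-block MIM sum is $5t+4$, so Lemma~\ref{lema1} applied to one block buys only one of the two edge-losses you need. The second loss --- which you gesture at with ``the prescribed edges also constrain the flank adjacent to $G^*$'' --- is precisely the substance of the paper's proof: the case analysis on $i\bmod 4$, the claims about whether $u_3v_{i\pm1}$ and $u_4v_{i+1}$ can be saturated, the bookkeeping of free saturable vertices, and the corner condition of Remark~\ref{rema1} are what extract the additional deficit of two saturated vertices on the far side of the cut. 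Without an explicit mechanism for that second loss, your outline never reaches a contradiction with $MIM_G=5t+3$.
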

 \begin{proof} Let $G$ be partitioned into $G^{|m(1)|}$ and $G^{|m(2)|}$, which are respectively induced by $A=\left\{V_1,V_2, \cdots V_i\right\}$ and $B=\left\{V_{i+1}. V_{i+2}, \cdots V_{m}\right\}$. Suppose that $M$ is an $MIM$ of $G$.
 \\
Case 1: $i \equiv 1 \mod 4$. Let $m=4k+3$ and set $i=4t+1$, where $k \geq 5$ and $t > 0$. Then, $|m(1)|\equiv 1 \mod 4$ and $|m(2)|\equiv 2 \mod 4$. Since $u_1v_i, u_2v_i$, $u_5v_i$ and $u_5v_{i-1}$ are saturated vertices in $V_i$, then the only $FSV$ on $V_{i-1}$ is $u_3v_{i-1}$. Suppose that $u_3v_{i-1}$ remains unsaturated. Let $G^{|m(3)|} \subset G^{|m(1)|}$ induced by $\left\{V_1,V_2, \cdots V_{i-2}\right\}$, where $|m(3)|\equiv 3 \mod 4$. By \cite{RMG1}, $|V_{st}(G^{|m(3)|})|=10t-4$. Thus, $|V_{st}(G^{|m(1)|})| \leq 10t$. Suppose that $u_3v_{i-1}$ is saturated, then, $u_5v_{(^{i-2}_{i-1})} \in M$. Thus, $u_3v_{i-3} \in V_{i-3} \subset G^{|m(4)|}$, unsaturable, where $G^{|m(4)|}$ is $G^{|m(3)|} \backslash V_{i-2}$. Note that $|m(4)|\equiv 2 \mod 4$. From Lemma \ref{lema1}, therefore, $|V_{st}(G^{|m(4)|})| \leq 10t-6$ and thus, $|V_{st}G^{|m(1)|}| \leq 10t-6+6=10t$.
Now, since $u_1v_i, u_2v_i$ and $u_5v_i$ are saturated vertices in $V_i$, then, $u_3v_{i+1}, u_4v_{i+1} \in V(G^{|m(2)|})$ are saturable vertices in $G^{|m(2)|}$.

Claim: Edge $u_{(^3_4)}v_{i+1}$ belongs to $M$.

Reason: Suppose that both $u_3v_{i+1}$ and $u_4v_{i+1}$ are not saturated, then $V_{i+1}$ contains no saturable vertices. Let $G^{|m(2)|}\backslash \left\{V_{i+1}\right\}=G^{|m(5)|}$, where $|m(5)|\equiv 1 \mod 4.$ Thus, $|V_{st}(G)| \leq |V_{st}G^{|(m(1))|}|+|V_{st}(G^{|m(5)|})|=10k+2$. This implies that $M$ requires at least four more saturated vertices to be $MIM$ of $G$. However, $|V_{sb}(G^{|m(5)|})|=10(k-t)+3$ and suppose $u_3v_{i+1}, u_4v_{i+1} \in V_{st}(G)$, then $|V_{st}(G)| \leq 10k+5$, which in fact, is $|V_{st}(G)|=10k+4$. Thus if $u_{(^1_2)}v_i$,$u_5v_{\left(^{i-1}_i\right)} \in M$, then $M$ is not an $MIM$ of $G$.\\
Suppose that $u_{(^1_2)}v_i$,$u_5v_{\left(^i_{i+1}\right)} \in M$. Let $G^{|n(1)|}=G^{|m(1)|}\backslash \left\{V_i\right\}$ and $G^{|n(2)|}=G^{|m(2)|}+\left\{V_i\right\}$. Now, $|n(1)|\equiv 0 \mod 4$ and $|n(2)|\equiv 3 \mod 4$. We can see that $|V_{st}(G^{|n(2)|})|=10(k-t)+6$. Now, on $V_{i-1} \subset G^{|n(1)|}$, only vertices $u_3v_{i-1}$ and $u_4v_{i-1}$ are saturable. Suppose they are both not saturated after all. Let $G^{|n(3)|} \subset G^{|n(1)|}$ be induced by $\left\{V_1, V_2,\cdots, V_{i-2}\right\}$, where $|n(3)|\equiv 3 \mod 4$. $|V_{st}(G^{|n(3)|})|=10t-4$. Thus $|V_{st}(G)|=10k+2$. Therefore, $M$ requires four saturated vertices to be $MIM$ of $G$. Now, $|V_{sb}(G^{|n(3)|})|=10t-2$, and thus, $V(G^{|n(3)|})$ contains two extra $FSV$, say, $v_1,v_2$ which are not adjacent. Thus, the maximum number of saturable vertices from the vertex set $v_1, v_2, u_3v_{i-1},u_4v_{i-1}$ is $2$. Therefore, $|V_{st}(G)| \leq 10k+4$, which is a contradiction. \\
Case 2. For $i \equiv 2\mod 4$. Let $G^{|p(1)|}$ and $G^{|p(2)|}$ be partitions of $G$ induced by $\left\{V_1,V_2 \cdots V_i\right\}$ and $\left\{V_{i+1},V_{i+2}, \cdots V_m\right\}$, with $m=4k+3$ and $i=4t+2$. Let $u_{(^1_2)}v_i$ and $u_5v_{(^{i-1}_i)}\in M$. Since $u_{(^1_2)}v_i$ belongs $M$ of $G$, then $u_3v_i$ cannot be saturated. Thus, $|V_{st}(G^{|p(2)|})| \geq 10(k-t)+2$ for $M$ to be maximal. It can be seen that $|p(2)| \equiv 1 \mod 4$. Now, $u_3v_{i+1}$ and $u_4v_{i+1}$ are saturable vertices in $V_{i+1}$. Suppose both of them are not saturated, then for $G^{|p(3)|}$ induced by $\left\{V_{i+2}, V_{i+3}, \cdots V_m\right\}$, where $|p(3)| \equiv 0 \mod 4$, $|V_{st}(G^{|p(3)|})| \leq 10(k-t)$. Thus $u_3v_{i+1}$ and $v_4v_{i+1}$ are saturable vertices and in fact, $u_{(^3_4)}v_{i+1} \in M$. On $V_{i+2}$, therefore, there exists three saturable vertices $u_1v_{i+1}, u_2v_{i+2}$ and $u_5v_{i+5}$. Suppose none of these three vertices are saturated. Then, $|V_{st}(G^{|p(3)|})| \leq |V_{st}(G^{|p(4)|})|+2$, with $G^{|p(4)|}$ induced by $\left\{V_{i+3}, \cdots V_m\right\}$ and $|p(4)|\equiv 3 \mod 4$ and thus, $|V_{st}(G^{|p(2)|})| \leq 10(t-k)-2$. Therefore it requires extra four saturated vertices for $M$ to be maximal. There exist two other saturable vertices, $v_1, v_2 \in V(G^{|p(4)|})$ (since $V_{st}(G^{|p(4)|})=10(k-t)-4$ and $V_{sb}(G^{|p(4)|})=10(k-t)-2$). Clearly, $v_1,v_2$ are not adjacent, else they would have formed an edge in $M$. Suppose $v_1,v_2 \in V_{i+3}$. For $v_1$ and $v_2$ to be saturated, they have to be $u_5v_{i+3}$ and one of $u_1v_{i+3}$ and $u_2v_{i+3}$. Thus, $u_5v_{(^{i+2}_{i+3})}\in M$ and one of $u_1v_{(^{i+2}_{i+3})}$ $u_2v_{(^{i+2}_{i+3})}$ or $u_{(^1_2)}v_{i+2}$ belongs to $M$. Let $G^{|p(5)|}$ be induced by $\left\{V_{i+4}, \cdots V_m\right\}$, where $|p(5)|\equiv 2 \mod 4$. Now, since $v_5v_{(^{i+2}_{i+3})} \in M$, then $u_5v_{i+5} \in V_{i+4}$ is unsaturable and therefore, by Remark \ref{rema1}, $|V_{st}(G^{|p(5)|})|=10(k-t-1)+4$ and thus, $|V_{st}(G^{|p(2)|})|=10(k-t)$, which is less than required. \\
The case of $u_5v_{(^i_{i+1})} \in M$ is the same as the case of $u_5v_{(^{i-1}_i)} \in M$ for $i \equiv 2 \mod 4$.\\
Case 3: $i \equiv 0 \mod 4$, $i \geq 6$ or $i \leq m-5$, with $u_{(^1_2)}v_i, u_5v_{(^{i-1}_{i})} \in M$. Let $G^{|r(1)|}$ and $G^{|r(2)|}$ be partitions of $G$ which are induced respectively by $\left\{V_1,V_2, \cdots V_i\right\}$ and $\left\{V_{i+1},V_{i+2}, \cdots, V_m \right\}$. Since $i \equiv 0 \mod 4$, then $|r(1)|\equiv 0 \mod 4$, while $|r(2)|\equiv 3 \mod 4$. Also, $u_5v_{(^{i-1}_{i})} \in M$, implies $u_5v_{i-1}$ is unsaturable. Since $i-2 \equiv 2 \mod 4$, then by Lemma \ref{lema1} and Remark \ref{rema1}, $|V_{st}(G^{|r(1)|})| \leq 10t-2$, implying that for $M$ to be maximal, $|V_{st}(G^{|r(2)|})| \geq 10(k-t)+8$. It can be seen that $V_{i+1}$ has two only saturable vertices $u_3v_{i+1}$, $u_4v_{i+2}$ left. It should also be noted that if any of $u_3v_{i+1}$ and $u_4v_{i+2}$ is saturated, then $u_3v_{i+3}$ can not be saturated in $G^{|r(3)|}$, a subgrid of $G^{|r(2)|}$ induced by $\left\{V_{i+2}, V_{i+3}, \cdots V_m\right\}$, with $|r(3)|\equiv 2 \mod 4$. Thus suppose $u_3v_{i+1}, u_4v_{i+2} \in V_{st}(G)$, then $|V_{st}(G)| \leq 10(k-t)+4$. Likewise, if $u_3v_{i+1}, u_4v_{i+2} \notin V_{st}(G)$, $|V_{st}(G)| \leq 10t-2+10(k-t)+6$.

The case of $u_5v_{(^i_{i+1})} \in M$ follows the same argument as the case of $u_5v_{(^{i-1}_i)} \in M$.
 \end{proof}

{\tiny{
\begin{center}
\pgfdeclarelayer{nodelayer}
\pgfdeclarelayer{edgelayer}
\pgfsetlayers{nodelayer,edgelayer}
\begin{tikzpicture}
	\begin{pgfonlayer}{nodelayer}
	
	\node [minimum size=0cm,]  at (-6.5,6.5) {Figure 1. A Grid $G \equiv G_{5,23}$ with $MIM_G=28$, $u_{^1_2}v_1,u_{^1_2}v_4 \in MIM$ of $G$};
	
	  \node [minimum size=0cm,draw,fill=black,circle] (1) at (-12,7) {};
		\node [minimum size=0cm,draw,circle] (2) at (-11.5,7) {};
		\node [minimum size=0cm,draw,circle] (3) at (-11,7) {};
		\node [minimum size=0cm,draw,fill=black,circle] (4) at (-10.5,7) {};
		\node [minimum size=0cm,draw,circle] (5) at (-10,7) {};
		\node [minimum size=0cm,draw,fill=black,circle] (6) at (-9.5,7) {};
		\node [minimum size=0cm,draw,fill=black,circle] (7) at (-9,7) {};
		\node [minimum size=0cm,draw,circle] (8) at (-8.5,7) {};
		\node [minimum size=0cm,draw,circle] (9) at (-8,7) {};
		\node [minimum size=0cm,draw,fill=black,circle] (10) at (-7.5,7) {};
    \node [minimum size=0cm,draw,fill=black,circle] (11) at (-7,7) {};
		\node [minimum size=0cm,draw,circle] (12) at (-6.5,7) {};
		\node [minimum size=0cm,draw,circle] (13) at (-6,7) {};
		\node [minimum size=0cm,draw,fill=black,circle] (14) at (-5.5,7) {};
		\node [minimum size=0cm,draw,fill=black,circle] (15) at (-5,7) {};
		\node [minimum size=0cm,draw,circle] (16) at (-4.5,7) {};
		\node [minimum size=0cm,draw,circle] (17) at (-4,7) {};
		\node [minimum size=0cm,draw,fill=black,circle] (18) at (-3.5,7) {};
		\node [minimum size=0cm,draw,fill=black,circle] (19) at (-3,7) {};
		\node [minimum size=0cm,draw,circle] (20) at (-2.5,7) {};
		\node [minimum size=0cm,draw,circle] (21) at (-2,7) {};
		\node [minimum size=0cm,draw,fill=black,circle] (22) at (-1.5,7) {};
		\node [minimum size=0cm,draw,fill=black,circle] (23) at (-1,7) {};

		\node [minimum size=0cm,draw,fill=black, circle] (24) at (-12,8) {};
		\node [minimum size=0cm,draw,circle] (25) at (-11.5,8) {};
		\node [minimum size=0cm,draw,circle] (26) at (-11,8) {};
		\node [minimum size=0cm,draw,fill=black,circle] (27) at (-10.5,8) {};
		\node [minimum size=0cm,draw,circle] (28) at (-10,8) {};
		\node [minimum size=0cm,draw,circle] (29) at (-9.5,8) {};
		\node [minimum size=0cm,draw,circle] (30) at (-9,8) {};
		\node [minimum size=0cm,draw,fill=black,circle] (31) at (-8.5,8) {};
		\node [minimum size=0cm,draw,fill=black,circle] (32) at (-8,8) {};
		\node [minimum size=0cm,draw,circle] (33) at (-7.5,8) {};
		\node [minimum size=0cm,draw,circle] (34) at (-7,8) {};
		\node [minimum size=0cm,draw,fill=black,circle] (35) at (-6.5,8) {};
		\node [minimum size=0cm,draw,fill=black,circle] (36) at (-6,8) {};
		\node [minimum size=0cm,draw,circle] (37) at (-5.5,8) {};
		\node [minimum size=0cm,draw,circle] (38) at (-5,8) {};
		\node [minimum size=0cm,draw,fill=black,circle] (39) at (-4.5,8) {};
		\node [minimum size=0cm,draw,fill=black,circle] (40) at (-4,8) {};
		\node [minimum size=0cm,draw,circle] (41) at (-3.5,8) {};
		\node [minimum size=0cm,draw,circle] (42) at (-3,8) {};
		\node [minimum size=0cm,draw,fill=black,circle] (43) at (-2.5,8) {};
		\node [minimum size=0cm,draw,fill=black,circle] (44) at (-2,8) {};
		\node [minimum size=0cm,draw,circle] (45) at (-1.5,8) {};
		\node [minimum size=0cm,draw,circle] (46) at (-1,8) {};

		\node [minimum size=0cm,draw,circle] (47) at (-12,9) {};
		\node [minimum size=0cm,draw,fill=black,circle] (48) at (-11.5,9) {};
		\node [minimum size=0cm,draw,fill=black,circle] (49) at (-11,9) {};
		\node [minimum size=0cm,draw,circle] (50) at (-10.5,9) {};
		\node [minimum size=0cm,draw,circle] (51) at (-10,9) {};
		\node [minimum size=0cm,draw,fill=black,circle] (52) at (-9.5,9) {};
		\node [minimum size=0cm,draw,fill=black,circle] (53) at (-9,9) {};
		\node [minimum size=0cm,draw,circle] (54) at (-8.5,9) {};
		\node [minimum size=0cm,draw,circle] (55) at (-8,9) {};
		\node [minimum size=0cm,draw,fill=black,circle] (56) at (-7.5,9) {};

		\node [minimum size=0cm,draw,fill=black,circle] (57) at (-7,9) {};
		\node [minimum size=0cm,draw,circle] (58) at (-6.5,9) {};
		\node [minimum size=0cm,draw,circle] (59) at (-6,9) {};
		\node [minimum size=0cm,draw,fill=black,circle] (60) at (-5.5,9) {};
		\node [minimum size=0cm,draw,fill=black,circle] (61) at (-5,9) {};
		\node [minimum size=0cm,draw,circle] (62) at (-4.5,9) {};
		\node [minimum size=0cm,draw,circle] (63) at (-4,9) {};
		\node [minimum size=0cm,draw,fill=black,circle] (64) at (-3.5,9) {};
		\node [minimum size=0cm,draw,fill=black,circle] (65) at (-3,9) {};
		\node [minimum size=0cm,draw,circle] (66) at (-2.5,9) {};
		\node [minimum size=0cm,draw,circle] (67) at (-2,9) {};
		\node [minimum size=0cm,draw,fill=black,circle] (68) at (-1.5,9) {};
		\node [minimum size=0cm,draw,fill=black,circle] (69) at (-1,9) {};

		\node [minimum size=0cm,draw,fill=black,circle] (70) at (-12,10) {};
		\node [minimum size=0cm,draw,circle] (71) at (-11.5,10) {};
		\node [minimum size=0cm,draw,circle] (72) at (-11,10) {};
		\node [minimum size=0cm,draw,circle] (73) at (-10.5,10) {};
		\node [minimum size=0cm,draw,circle] (74) at (-10,10) {};
			\node [minimum size=0cm,draw,circle] (75) at (-9.5,10) {};
		\node [minimum size=0cm,draw,circle] (76) at (-9,10) {};
		\node [minimum size=0cm,draw,fill=black,circle] (77) at (-8.5,10) {};
		\node [minimum size=0cm,draw,fill=black,circle] (78) at (-8,10) {};
		\node [minimum size=0cm,draw,circle] (79) at (-7.5,10) {};

		\node [minimum size=0cm,draw,circle] (80) at (-7,10) {};
		\node [minimum size=0cm,draw,fill=black,circle] (81) at (-6.5,10) {};
		\node [minimum size=0cm,draw,fill=black,circle] (82) at (-6,10) {};
		\node [minimum size=0cm,draw,circle] (83) at (-5.5,10) {};
		\node [minimum size=0cm,draw,circle] (84) at (-5,10) {};
		\node [minimum size=0cm,draw,fill=black,circle] (85) at (-4.5,10) {};
		\node [minimum size=0cm,draw,fill=black,circle] (86) at (-4,10) {};
		\node [minimum size=0cm,draw,circle] (87) at (-3.5,10) {};
		\node [minimum size=0cm,draw,circle] (88) at (-3,10) {};
		\node [minimum size=0cm,draw,fill=black,circle] (89) at (-2.5,10) {};
		\node [minimum size=0cm,draw,fill=black,circle] (90) at (-2,10) {};
		\node [minimum size=0cm,draw,,circle] (91) at (-1.5,10) {};
		\node [minimum size=0cm,draw,circle] (92) at (-1,10) {};

		\node [minimum size=0cm,draw,fill=black,circle] (93) at (-12,11) {};
		\node [minimum size=0cm,draw,circle] (94) at (-11.5,11) {};
		\node [minimum size=0cm,draw,fill=black,circle] (95) at (-11,11) {};
		\node [minimum size=0cm,draw,fill=black,circle] (96) at (-10.5,11) {};
		\node [minimum size=0cm,draw,circle] (97) at (-10,11) {};
		\node [minimum size=0cm,draw,fill=black,circle] (98) at (-9.5,11) {};
		\node [minimum size=0cm,draw,fill=black,circle] (99) at (-9,11) {};
		\node [minimum size=0cm,draw,circle] (100) at (-8.5,11) {};
		\node [minimum size=0cm,draw,circle] (101) at (-8,11) {};
		\node [minimum size=0cm,draw,fill=black,circle] (102) at (-7.5,11) {};
    \node [minimum size=0cm,draw,fill=black!,circle] (103) at (-7,11) {};
		\node [minimum size=0cm,draw,circle] (104) at (-6.5,11) {};
		\node [minimum size=0cm,draw,circle] (105) at (-6,11) {};
		\node [minimum size=0cm,draw,fill=black,circle] (106) at (-5.5,11) {};
		\node [minimum size=0cm,draw,fill=black,circle] (107) at (-5,11) {};
		\node [minimum size=0cm,draw,circle] (108) at (-4.5,11) {};
		\node [minimum size=0cm,draw,circle] (109) at (-4,11) {};
		\node [minimum size=0cm,draw,fill=black,circle] (110) at (-3.5,11) {};
		\node [minimum size=0cm,draw,fill=black,circle] (111) at (-3,11) {};
		\node [minimum size=0cm,draw,circle] (112) at (-2.5,11) {};
		\node [minimum size=0cm,draw,circle] (113) at (-2,11) {};
		\node [minimum size=0cm,draw,fill=black,circle] (114) at (-1.5,11) {};
		\node [minimum size=0cm,draw,fill=black,circle] (115) at (-1,11) {};

			\end{pgfonlayer}
				\begin{pgfonlayer}{edgelayer}
		\draw [thin=1.00] (1) to (2);
		\draw [thin=1.00] (2) to (3);
		\draw [thin=1.00] (3) to (4);
		\draw [thin=1.00] (4) to (5);
		\draw [thin=1.00] (5) to (6);
		\draw [very thick=1.00] (6) to (7);
		\draw [thin=1.00] (7) to (8);
		\draw [thin=1.00] (8) to (9);
			\draw [thin=1.00] (9) to (10);
		\draw [very thick=1.00] (10) to (11);
		\draw [thin=1.00] (11) to (12);
		\draw [thin=1.00] (12) to (13);
		\draw [thin=1.00] (13) to (14);
		\draw [very thick=1.00] (14) to (15);
		\draw [thin=1.00] (15) to (16);
		\draw [thin=1.00] (16) to (17);
		\draw [thin=1.00] (17) to (18);
		\draw [very thick=1.00] (18) to (19);
		\draw [thin=1.00] (19) to (20);
		\draw [thin=1.00] (20) to (21);
		\draw [thin=1.00] (21) to (22);
		\draw [very thick=1.00] (22) to (23);

			\draw [thin=1.00] (24) to (25);
		\draw [thin=1.00] (25) to (26);
		\draw [thin=1.00] (26) to (27);
		\draw [thin=1.00] (27) to (28);
		\draw [thin=1.00] (28) to (29);
		\draw [thin=1.00] (29) to (30);
		\draw [thin=1.00] (30) to (31);
		\draw [very thick=1.00] (31) to (32);
		\draw [thin=1.00] (32) to (33);
			\draw [thin=1.00] (33) to (34);
		\draw [thin=1.00] (34) to (35);
		\draw [very thick=1.00] (35) to (36);
		\draw [thin=1.00] (36) to (37);
		\draw [thin=1.00] (37) to (38);
		\draw [thin=1.00] (38) to (39);
		\draw [very thick=1.00] (39) to (40);
		\draw [thin=1.00] (40) to (41);
		\draw [thin=1.00] (41) to (42);
		\draw [thin=1.00] (42) to (43);
		\draw [very thick=1.00] (43) to (44);
		\draw [thin=1.00] (44) to (45);
		\draw [thin=1.00] (45) to (46);
		
			\draw [thin=1.00] (47) to (48);
		\draw [very thick=1.00] (48) to (49);
		\draw [thin=1.00] (49) to (50);
		\draw [thin=1.00] (50) to (51);
		\draw [thin=1.00] (51) to (52);
		\draw [very thick=1.00] (52) to (53);
		\draw [thin=1.00] (53) to (54);
		\draw [thin=1.00] (54) to (55);
		\draw [thin=1.00] (55) to (56);
			\draw [very thick=1.00] (56) to (57);
		\draw [thin=1.00] (57) to (58);
		\draw [thin=1.00] (58) to (59);
		\draw [thin=1.00] (59) to (60);
		\draw [very thick=1.00] (60) to (61);
		\draw [thin=1.00] (61) to (62);
		\draw [thin=1.00] (62) to (63);
		\draw [thin=1.00] (63) to (64);
		\draw [very thick=1.00] (64) to (65);
		\draw [thin=1.00] (65) to (66);
		\draw [thin=1.00] (66) to (67);
		\draw [thin=1.00] (67) to (68);
		\draw [very thick=1.00] (68) to (69);

   	\draw [thin=1.00] (70) to (71);
		\draw [thin=1.00] (71) to (72);
		\draw [thin=1.00] (72) to (73);
		\draw [thin=1.00] (73) to (74);
		\draw [thin=1.00] (74) to (75);
		\draw [thin=1.00] (75) to (76);
		\draw [thin=1.00] (76) to (77);
		\draw [very thick=1.00] (77) to (78);
		\draw [thin=1.00] (78) to (79);
		\draw [thin=1.00] (79) to (80);
		\draw [thin=1.00] (80) to (81);
		\draw [very thick=1.00] (81) to (82);
		\draw [thin=1.00] (82) to (83);
		\draw [thin=1.00] (83) to (84);
		\draw [thin=1.00] (84) to (85);
		\draw [very thick=1.00] (85) to (86);
		\draw [thin=1.00] (86) to (87);
		\draw [thin=1.00] (87) to (88);
		\draw [thin=1.00] (88) to (89);
		\draw [very thick=1.00] (89) to (90);
		\draw [thin=1.00] (90) to (91);
		\draw [thin=1.00] (91) to (92);

			\draw [thin=1.00] (93) to (94);
		\draw [thin=1.00] (94) to (95);
		\draw [very thick=1.00] (95) to (96);
		\draw [thin=1.00] (96) to (97);
		\draw [thin=1.00] (97) to (98);
		\draw [very thick=1.00] (98) to (99);
		\draw [thin=1.00] (99) to (100);
		\draw [thin=1.00] (100) to (101);
		\draw [thin=1.00] (101) to (102);
		\draw [very thick=1.00] (102) to (103);
		\draw [thin=1.00] (103) to (104);
		\draw [thin=1.00] (104) to (105);
		\draw [thin=1.00] (105) to (106);
		\draw [very thick=1.00] (106) to (107);
		\draw [thin=1.00] (107) to (108);
		\draw [thin=1.00] (108) to (109);
		\draw [thin=1.00] (109) to (110);
		\draw [very thick=1.00] (110) to (111);
		\draw [thin=1.00] (111) to (112);
		\draw [thin=1.00] (112) to (113);
		\draw [thin=1.00] (113) to (114);
		\draw [very thick=1.00] (114) to (115);

		\draw [very thick=1.00] (1) to (24);
		\draw [thin=1.00] (24) to (47);
		\draw [thin=1.00] (47) to (70);
		\draw [very thick=1.00] (70) to (93);
		
		\draw [thin=1.00] (2) to (25);
		\draw [thin=1.00] (25) to (48);
		\draw [thin=1.00] (48) to (71);
		\draw [thin=1.00] (71) to (94);
		
		\draw [thin=1.00] (3) to (26);
		\draw [thin=1.00] (26) to (49);
		\draw [thin=1.00] (49) to (72);
		\draw [thin=1.00] (72) to (95);
		
		\draw [very thick=1.00] (4) to (27);
		\draw [thin=1.00] (27) to (50);
		\draw [thin=1.00] (50) to (73);
		\draw [thin=1.00] (73) to (96);
		
		\draw [thin=1.00] (5) to (28);
		\draw [thin=1.00] (28) to (51);
		\draw [thin=1.00] (51) to (74);
		\draw [thin=1.00] (74) to (97);
		
		\draw [thin=1.00] (6) to (29);
		\draw [thin=1.00] (29) to (52);
		\draw [thin=1.00] (52) to (75);
		\draw [thin=1.00] (75) to (98);
		
		\draw [thin=1.00] (7) to (30);
		\draw [thin=1.00] (30) to (53);
		\draw [thin=1.00] (53) to (76);
		\draw [thin=1.00] (76) to (99);
		
		\draw [thin=1.00] (8) to (31);
		\draw [thin=1.00] (31) to (54);
		\draw [thin=1.00] (54) to (77);
		\draw [thin=1.00] (77) to (100);
		
			\draw [thin=1.00] (9) to (32);
		\draw [thin=1.00] (32) to (55);
		\draw [thin=1.00] (55) to (78);
		\draw [thin=1.00] (78) to (101);
		
		\draw [thin=1.00] (10) to (33);
		\draw [thin=1.00] (33) to (56);
		\draw [thin=1.00] (56) to (79);
		\draw [thin=1.00] (79) to (102);
		
		\draw [thin=1.00] (11) to (34);
		\draw [thin=1.00] (34) to (57);
		\draw [thin=1.00] (57) to (80);
		\draw [thin=1.00] (80) to (103);

		\draw [thin=1.00] (12) to (35);
		\draw [thin=1.00] (35) to (58);
		\draw [thin=1.00] (58) to (81);
		\draw [thin=1.00] (81) to (104);

		\draw [thin=1.00] (13) to (36);
		\draw [thin=1.00] (36) to (59);
		\draw [thin=1.00] (59) to (82);
		\draw [thin=1.00] (82) to (105);
		
		\draw [thin=1.00] (14) to (37);
		\draw [thin=1.00] (37) to (60);
		\draw [thin=1.00] (60) to (83);
		\draw [thin=1.00] (83) to (106);
		
		\draw [thin=1.00] (15) to (38);
		\draw [thin=1.00] (38) to (61);
		\draw [thin=1.00] (61) to (84);
		\draw [thin=1.00] (84) to (107);
		
		\draw [thin=1.00] (16) to (39);
		\draw [thin=1.00] (39) to (62);
		\draw [thin=1.00] (62) to (85);
		\draw [thin=1.00] (85) to (108);
		
	  \draw [thin=1.00] (17) to (40);
		\draw [thin=1.00] (40) to (63);
		\draw [thin=1.00] (63) to (86);
		\draw [thin=1.00] (86) to (109);
		
		\draw [thin=1.00] (18) to (41);
		\draw [thin=1.00] (41) to (64);
		\draw [thin=1.00] (64) to (87);
		\draw [thin=1.00] (87) to (110);
		
		\draw [thin=1.00] (19) to (42);
		\draw [thin=1.00] (42) to (65);
		\draw [thin=1.00] (65) to (88);
		\draw [thin=1.00] (88) to (111);
		
		\draw [thin=1.00] (20) to (43);
		\draw [thin=1.00] (43) to (66);
		\draw [thin=1.00] (66) to (89);
		\draw [thin=1.00] (89) to (112);
		
		\draw [thin=1.00] (21) to (44);
		\draw [thin=1.00] (44) to (67);
		\draw [thin=1.00] (67) to (90);
		\draw [thin=1.00] (90) to (113);
		
		\draw [thin=1.00] (22) to (45);
		\draw [thin=1.00] (45) to (68);
		\draw [thin=1.00] (68) to (91);
		\draw [thin=1.00] (91) to (114);
		
		\draw [thin=1.00] (23) to (46);
		\draw [thin=1.00] (46) to (69);
		\draw [thin=1.00] (69) to (92);
		\draw [thin=1.00] (92) to (115);
		
	\end{pgfonlayer}
\end{tikzpicture}
\end{center}
}}

\begin{rem}\label{rema1a}
 \ben \item In the case of $i \equiv 0 \mod m$ in \ref{lema2}, $M$ remains a $MIM$ when $i=4$ or when $i=m-3$ as seen in Figure 1 of $|MIM|=28$ of $G_{5,23}$.
%
\item \label{rema2} It should be noted that the case of $i \equiv 3 \mod 4$ has been taken care of by the case of $i \equiv 1 \mod 4$ by 'flipping' the grid from right to left or vice versa.
\item    \label{rema3} From Lemma \ref{lema2}, we see that if for some induced matching $M$ of $G_{5,m}$, $m \equiv 3 \mod 4$, $u_{(^1_2)}v_i$ and $u_5v_{(^{i-1}_{i})}$ (or $u_5v_{(^i_{i+2})}) \in M$, then $M$ is not a maximal induced matching of $G$ for any $1 < i < m$.
    \een
 \end{rem}


 Next we investigate some $M$ of $G_{5,m}$ if it contains $u_{(^1_2)}v_i$ and $u_{^4_5}v_i$.

\begin{lem}\label{lema3} Suppose $G=G_{5,m}$, where $m \geq 23$ and $m \equiv 3 \mod 4$. Let $u_{(^1_2)}v_i, u_{(^4_5)}v_i  \in M$, an induced matching of $G$ and $1<i<m$, $i \not\equiv 0 \mod 4$ then $M$ is not a $MIM$ of $G$.
\end{lem}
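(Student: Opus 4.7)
The plan is to mimic the partition-and-count strategy of Lemma \ref{lema2}. I would split $G$ at column $i$ into $G^{|m(1)|}$, induced by $V_1,\ldots,V_i$, and $G^{|m(2)|}$, induced by $V_{i+1},\ldots,V_m$, bound $|V_{st}|$ on each piece, and show the total falls short of the $10k+6$ saturated vertices a genuine $MIM$ of $G_{5,4k+3}$ would require by Theorem \ref{thm2.2}.

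The key initial observation is top-to-bottom symmetric: because $u_{(^1_2)}v_i,u_{(^4_5)}v_i \in M$, all of $u_1v_i,u_2v_i,u_4v_i,u_5v_i$ are saturated, so $u_3v_i$ is unsaturable; every vertex of $V_{i-1}$ other than $u_3v_{i-1}$ is adjacent to a saturated vertex of $V_i$, and analogously in $V_{i+1}$ only $u_3v_{i+1}$ is a candidate. Moreover, whenever $u_3v_{i\pm 1}$ is saturated it must be via the horizontal edge $u_3v_{(^{i\mp 1}_{i\pm 1})}$, since $u_2v_{i\pm 1}$ and $u_4v_{i\pm 1}$ themselves cannot be saturated. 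Hence saturation ``leaks'' out of column $i$ only through the middle row, and this single structural fact drives the entire count.

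I would then split on $i\bmod 4$. By Remark \ref{rema1a}(\ref{rema2}) the case $i\equiv 3\bmod 4$ is equivalent by horizontal reflection to $i\equiv 1\bmod 4$, so only two cases are new. For $i\equiv 1\bmod 4$, write $i=4t+1$ and $m=4k+3$; then $|m(1)|\equiv 1$ and $|m(2)|\equiv 2\pmod 4$. On the left piece I would consider the two sub-cases of whether $u_3v_{i-1}$ is saturated, using \cite{RMG1} on the type-$3\bmod 4$ residue $G_{5,i-2}$ in one branch and Lemma \ref{lema1} (with Remark \ref{rema1}) on the type-$2\bmod 4$ residue in the other; in both branches the bound $|V_{st}(G^{|m(1)|})|\le 10t$ emerges. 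The right piece, of type $2\bmod 4$ with $u_3v_{i+1}$ as the only candidate, is handled by Lemma \ref{lema1} directly, yielding $|V_{st}(G^{|m(2)|})|\le 10(k-t)+4$ in the worst branch. Summation will then give $|V_{st}(G)|\le 10k+4$, strictly below the required $10k+6$. The case $i\equiv 2\bmod 4$, with $i=4t+2$ and $|m(1)|\equiv 2$, $|m(2)|\equiv 1\pmod 4$, is handled by the mirror argument: Lemma \ref{lema1} applies directly to the left piece while the right piece is split on $u_3v_{i+1}$ and then reduced again by Lemma \ref{lema1}.

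The main obstacle will be the sub-case where the leaked saturation does occur, say $u_3v_{i+1}$ saturated via $u_3v_{(^{i+1}_{i+2})}$. One must then chase the induced unsaturability into $V_{i+3}$ and $V_{i+4}$ as in Case 2 of Lemma \ref{lema2}, and show that the two residual free saturable vertices produced by the gap $|V_{sb}|-|V_{st}|=2$ cannot both be placed on a single edge of $M$: either they are non-adjacent, or saturating them forces a boundary vertex in the set $A$ of Remark \ref{rema1} to be unsaturated, dropping the count yet again. Once this bookkeeping is transcribed from Lemma \ref{lema2}, the comparison with the target $10k+6$ is routine arithmetic.
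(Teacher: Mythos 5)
Your proposal is essentially the paper's own argument: the same split at column $i$ into $G^{|m(1)|}$ and $G^{|m(2)|}$, the same observation that only $u_3v_{i\pm1}$ can be saturated next to column $i$ (and then only via the middle-row horizontal edge), the same sub-case analysis via Lemma \ref{lema1} and the known $V_{sb}/V_{st}$ counts, ending in $|V_{st}(G)|\le 10k+4<10k+6$. The only slips are cosmetic — e.g.\ in the $i\equiv 2\bmod 4$ case the right-hand remainder after deleting $V_{i+1}$ has length $\equiv 0\bmod 4$, so the plain even-grid saturable count (not Lemma \ref{lema1}) supplies the bound $10(k-t)$ — and these do not change the count or the conclusion.
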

  \begin{proof} Suppose that $i \equiv 2 \mod 4$. Let $G^{|m(1)|}$ and $G^{|m(2)|}$ be partitions of $G$ induced by $\left\{V_1, V_2, \cdots, V_i\right\}$ and $\left\{V_{i+1}, V_{i+2}, \cdots, V_m\right\}$. Since $u_{(^1_2)}v_i, u_{(^4_5)}v_1 \in M$, then, $u_3v_i$ is unsaturated. Let $i=4t+2$, for some positive integer $t$, by Lemma \ref{lema2} then $|V_{st}(G^{|m(1)|})|=10t+4$. Now, only $u_3v_{i+1}$ is saturable on $V_{i+1}$. Let $G^{|m(3)|} \subset G^{|m(2)|}$, induced by $\left\{V_{i+2}, \cdots V_m\right\}$. Clearly $|m(3)|=|m(2)|-1=4(k-t)$. Therefore, $|V_{st}(G^{|m(3)|}+u_3v_i)| \leq 10(k-t)+1$, which in fact is $10(k-t)$. Thus, $|V_{st}(G)|=10k+4$.

Now, suppose $i \equiv 1 \mod 4$. Let $G^{|n(1)|}$ be induced by $\left\{V_1,V_2, \cdots, V_i\right\}$ and let $G^{|n(2)|}$ be induced by $\left\{V_{i+1}V_{i+2},\cdots,V_m\right\}$. Since $|n(1)|=4t+1$, it is easy to see that $|n(2)| \equiv 2 \mod 4$ and hence, $|n(2)|=4(k-t)+2$. \\
Claim. For $M$ to be maximal, both $u_3v_{i-1}$ and $u_3v_{i+1}$ must be saturated. \\
Reason: Suppose, say $u_3v_{i-1}$ is not saturated. Then, no vertex on $V_{i-1}$ is saturable. Now, let $\left\{V_1, V_2,  \cdots V_{i-2}\right\}$ induce grid $G^{|n(3)|}$, with $|n(3)|\equiv 3 \mod 4$. $|V_{st}(G^{|n(3)|})|=10t-4$, and thus, $G^{|n(1)|}=10t$ Also, Let $G^{|n(4)|}$ be induced by  $\left\{V_{i+2},V_{i+3, \cdots ,V_m}\right\}$. Since $|n(4)|=4(k-t)+1$, then for $G^{|n(4)|}+u_5v_{i+1}$, $|V_{sb}[(G^{|n(4)|})+u_3v_{i+1}]|=10(k-t)+4$. Therefore, $|V_{st}(G)| \leq 10k+4$. Now Suppose $u_3v_{(^{i-2}_{i-1})} \in M$. Then, given $G^{|n(5)|}$, induced by $\left\{V_1,V_2, \cdots, V_{i-3}\right\}$. We can see that $|n(5)|\equiv 2 \mod 4$. By Lemma \ref{lema1}, $|V_{st}(G^{|n(5)|})|=10t-6$. Thus, $|V_{st}(G^{|n(1)|})|=10t$ and therefore, $|V_{st}(G)| \leq 10k+4$.
  \end{proof}
\begin{rem} Like in Remark \ref{rema1a}, for $i \equiv 0 \mod 4$, it can be seen that $u_{(^1_2)}v_1, u_{(^1_2)}v_4$ or $u_{(^1_2)}v_{m-3}, u_{(^1_2)}v_m$ can be in $M$ if $M$ is $MIM$ of $G$. Also given $i \equiv 0 \mod 4$ and $4 < i < m-3 $, for at most only one $i$, from $1$ to $m$, $u_{(^1_2)}v_i$ can be a member of maximal $M$.

\end{rem}

Next we investigate the maximality of the induced matching of $G=G_{5,m}$, $m \equiv 3 \mod 4$.

\begin{lem}\label{lema4}
Let $u_{(^1_2)}v_i$,$u_4v_{\left(^{i-1}_i\right)} \in M$ or $u_{(^1_2)}v_i$,$u_4v_{\left(^i_{i+1}\right)} \in M$ where $M$ is an induced matching of $G$, a $G_{5,m}$ grid, $m \equiv 3 \mod 4$, $m \geq 23$ and $1<i<m$, $i \not\equiv 0 \mod 4$. Then $M$ is not a $MIM$ of G.
\end{lem}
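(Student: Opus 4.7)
The plan follows the template of the proof of Lemma \ref{lema2}. Without loss of generality assume $u_4v_{(^{i-1}_i)} \in M$; the case $u_4v_{(^i_{i+1})} \in M$ is symmetric under horizontal reflection, and the case $i \equiv 3 \mod 4$ reduces to $i \equiv 1 \mod 4$ by the flip described in Remark \ref{rema2}. It suffices therefore to treat $i \equiv 1 \mod 4$ and $i \equiv 2 \mod 4$.

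I would first record the forced saturation pattern produced by the two hypothesized edges. The four saturated vertices $u_1v_i, u_2v_i, u_4v_i, u_4v_{i-1}$ render unsaturable all of $u_1v_{i-1}, u_2v_{i-1}, u_3v_{i-1}, u_5v_{i-1}$ (each adjacent to a saturated vertex of $V_{i-1}$ or $V_i$), the vertices $u_3v_i$ and $u_5v_i$, the three vertices $u_1v_{i+1}, u_2v_{i+1}, u_4v_{i+1}$, and also $u_4v_{i-2}$. In $V_{i+1}$ only $u_3v_{i+1}$ and $u_5v_{i+1}$ remain potentially saturable, and the entire column $V_{i-1}$ contributes exactly one vertex to $V_{st}(G)$.

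For Case 1 ($i \equiv 1 \mod 4$), write $i = 4t+1$ and $m = 4k+3$, and partition $G$ into three blocks $\{V_1,\ldots,V_{i-2}\}$, $\{V_{i-1},V_i\}$, $\{V_{i+1},\ldots,V_m\}$ of widths $4t-1 \equiv 3 \mod 4$, $2$, and $4(k-t)+2 \equiv 2 \mod 4$. Theorem \ref{thm2.2}(b)(ii) caps the left block's saturation by $10t-4$, and the middle block contributes exactly four saturated vertices. I would then case-split on whether $u_3v_{i+1}$ is saturated. If it is not, Lemma \ref{lema1} applied by reflection to the right block caps that block's saturation by $10(k-t)+4$, whence $|V_{st}(G)| \leq 10k+4$. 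If $u_3v_{i+1}$ is saturated, the only available saturating edge is $u_3v_{(^{i+1}_{i+2})}$, which renders $u_2v_{i+2}$ and $u_4v_{i+2}$ unsaturable; peeling off $V_{i+1}$ and $V_{i+2}$ and applying Theorem \ref{thm2.2} or Lemma \ref{lema1} to $\{V_{i+3},\ldots,V_m\}$ (width $4(k-t) \equiv 0 \mod 4$) recovers the same overall bound $|V_{st}(G)| \leq 10k+4$.

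For Case 2 ($i \equiv 2 \mod 4$), write $i = 4t+2$ and split $G$ as $\{V_1,\ldots,V_i\} \cup \{V_{i+1},\ldots,V_m\}$. The left block has width $\equiv 2 \mod 4$, and since $u_3v_i$ is unsaturable (hence unsaturated), Lemma \ref{lema1} caps its saturation at $10t+4$. The right block has width $4(k-t)+1 \equiv 1 \mod 4$; after case-splitting on the saturation of $u_3v_{i+1}$ and $u_5v_{i+1}$ and propagating the resulting non-saturability into $V_{i+2}$ exactly as in the proof of Lemma \ref{lema2}, the right block contributes at most $10(k-t)$ saturated vertices, yielding $|V_{st}(G)| \leq 10k+4$. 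This bound is strictly less than the saturation achieved by MIM constructions on $G_{5,4k+3}$ (as illustrated in Figure 1 for $k=5$), contradicting the maximality of $M$. The principal obstacle will be the sub-case bookkeeping once $u_3v_{i+1}$ or $u_5v_{i+1}$ is saturated, since the forced-unsaturability cascade can persist for several columns and may require a further split on the independent statuses of $u_3v_{i+1}$ and $u_5v_{i+1}$; however, every branch reduces to an application of Lemma \ref{lema1}, Lemma \ref{lem2.1}, or Theorem \ref{thm2.2}, in close parallel to the proof of Lemma \ref{lema2}.
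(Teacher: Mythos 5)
There is a genuine gap, and it is in the very first step: your two symmetry reductions are the same reflection used twice, so they do not compose. Under the left--right flip $v_j \mapsto v_{m+1-j}$ (with $m+1 \equiv 0 \bmod 4$), the configuration $\{u_{(^1_2)}v_i,\, u_4v_{(^{i-1}_i)}\}$ at position $i$ is carried to $\{u_{(^1_2)}v_{i'},\, u_4v_{(^{i'}_{i'+1})}\}$ at $i'=m+1-i \equiv -i \bmod 4$. Hence the orbits are: ``left-attached'' at $i\equiv 1$ paired with ``right-attached'' at $i\equiv 3$; both types at $i \equiv 2$ paired with each other; and ``left-attached'' at $i\equiv 3$ paired with ``right-attached'' at $i\equiv 1$. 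Treating only $u_4v_{(^{i-1}_i)}$ with $i\equiv 1$ and $i\equiv 2$ therefore covers three of the six type/residue combinations, but the pair $\bigl\{u_4v_{(^{i-1}_i)} \text{ with } i\equiv 3,\; u_4v_{(^i_{i+1})} \text{ with } i\equiv 1\bigr\}$ is never reduced to anything you prove: flipping it just exchanges its two members. This is not a cosmetic omission, because the untreated configuration is structurally different. With $u_{(^1_2)}v_i, u_4v_{(^i_{i+1})} \in M$ and $i \equiv 1 \bmod 4$, the blocked column is $V_{i+1}$ rather than $V_{i-1}$, and on $V_{i-1}$ the vertices $u_3v_{i-1}$ and $u_5v_{i-1}$ remain saturable; your forced-saturation analysis of Case 1 (which rests on $u_4v_{i-1}$ being saturated and killing all of $V_{i-1}$) simply does not apply. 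The paper handles exactly this configuration by a separate argument: it repartitions into $\{V_1,\dots,V_{i-1}\}$ (width $\equiv 0 \bmod 4$) and $\{V_i,\dots,V_m\}$ (width $4(k-t)+3$), and then shows via Lemma \ref{lema1} that saturating $u_5v_{i-1}$ (forcing $u_5v_{(^{i-2}_{i-1})}\in M$ and making $u_5v_{i-3}$ unsaturable) drives the count down to $10k+2$, while leaving $u_3v_{i-1},u_5v_{i-1}$ unsaturated also fails. Your proposal needs this extra case or a correct reduction for it; the flip alone cannot supply it.

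Apart from that, your Cases 1 and 2 follow the paper's actual route (same column partitions, same use of Lemma \ref{lema1} and Theorem \ref{thm2.2} to cap each block, same target bound $10k+4 < 10k+6$), and where you deviate slightly (case-splitting on $u_3v_{i+1}$ in Case 1 instead of invoking the corner-vertex observation of Remark \ref{rema1} for the width-$\equiv 2$ block with $u_1v_{i+1}$ unsaturable) the bookkeeping in the branch where $u_3v_{(^{i+1}_{i+2})} \in M$ is asserted rather than carried out — you still owe an accounting of possible crossing edges such as $u_1v_{(^{i+2}_{i+3})}$ and $u_5v_{(^{i+2}_{i+3})}$ before concluding the right block stays below $10(k-t)+4$. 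That part is repairable along the lines you indicate; the missing configuration above is the real defect.
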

\begin{proof} Case 1: Let $i \equiv 1 \mod 4$. Suppose that $m=4k+3$ and $i=4t+1$,  $t \geq 1$.  Let $G^{|m(1)|}$ and $G^{|m(2)|}$ be two partitions of $G$, induced by $\left\{V_1,V_2, \cdots, V_i\right\}$ and $\left\{V_{i+1},V_{i+2}, \cdots V_m\right\}$ respectively. Since $u_{(^1_2)}v_i, u_4v_{(^{i-1}_i)} \in M$, then there is no other saturated vertex on both of $V_{i-1}$ and $V_i$. Let $G^{|m(3)|} \subset G^{|m(1)|}$ be a grid induced by $\left\{V_1,V_2,\cdots, V_{i-2}\right\}$. Now, $n(3)\equiv 3 \mod 4$. Therefore, $|V_{st}(G^{|m(3)|})|=10t-4$ and hence, $|V_{st}(G^{|{m(1)|}})|=10t$. Now, $|m(2)|\equiv 2 \mod 4$, since $u_{(^1_2)}v_i \in M$, then $u_1v_{i+1} \in V_{i+1}$ is unsaturable. From a previous result, $|V_{st}(G^{|n(2)|})|=10(k-t)+4$ and thus, $|V_{st}(G)|=10k+4$. For $u_4v_{(^i_{i+1})} \in M$. Let $G^{|n(1)|}$ and $G^{|n(2)|}$ be induced by $G^{|m(1)|}\backslash V_i$ and $G^{|m(2)|}+V_i$. Then, $|n(1)|\equiv 0 \mod 4$ and $|n(2)|=4(k-t)+3$. It can be seen that on $V_{i-1}$, only $u_3v_{i-1}$ and $u_5v_{i-1}$ are saturable vertices. \\
Claim: Vertices $u_3v_{i-1}$ and $u_5v_{i-1}$ are not saturable for $M$ to be maximal. \\
Reason: Suppose without loss of generality, that any of $u_3v_{i-1}$ and $u_5v_{i-1}$ is saturated, say $u_5v_{i-1}$. Then $u_5v_{(^{i-2}_{i-1}) \in M}$. This implies that $v_5v_{i-3}$ is not saturable in $V_{i-3}$. Now $\left\{V_1, V_2, \cdots, V_{i-3}\right\}$ induces a grid $G^{(|n(4)|)}$ and $|n(4)|\equiv 2 \mod 4$. Then, $|V_{st}(G^{|m(4)|})|=10t-6$ and thus, $|V_{st}(G^{|n(1)|})|=10t-4$. Now, since $|n(2)|=4(k-t)+3$, $|V_{st}(G^{|m(2)|})|=10(k-t)+6$ and therefore, $|V_{st}(G)|=10k+2$.\\
Case 2: For $i \equiv 2 \mod 4$. Let $G^{|n(1)|}$ and $G^{|n(2)|}$ be two partitions of $G$, induced by $\left\{V_1,V_2, \cdots, V_{i}\right\}$ and $\left\{V_{i+1},V_{i+2}, \cdots V_m\right\}$ respectively. Since $u_{(^1_2)}v_i$ and $u_4v_{(^{i-1}_{i})} \in M$, vertex $u_5v_i \in V_{sb}(G^{|n(1)|})$, and therefore, $|V_{st}G^{|n(1)|}|=10t+4$, where $|n(1)|=4t+2$. Also, only $u_3v_{i+1}$ and $u_5v_{i+1}$ are saturable on $V_{i+1}$.
 Suppose without loss of generality, that both $u_3v_{i+1}$ and $u_5v_{i+1}$ are saturated and thus, $u_3v_{(^{i+1}_{i+2})}, u_5v_{(^{i+1}_{i+2})} \in M$. Now, suppose that $G^{|n(4)|}$ is induced by $\left\{V_{i+3},V_{i+4}, \cdots V_m\right\}$, with $|n(4)|=4(k-t-1)+3$. By following the techniques employed earlier, it can be shown that $|V_{st}(G)| \leq |V_{st}(G^{|n(1)|})|+|V_{st}(G^{|n(2)|})| \leq 10k+4.$ The $u_4v_{(^{i}_{i+4})}$ case, has the same proof as the $u_4v_{(^{i-1}_{i})}$ case.
 \end{proof}
\begin{rem}There can be only one edge $u_{(^1_2)}v_i \in M$ for which $M$ is $MIM$ of $G_{5,m}$, if $M$ contains $u_{(^1_2)}v_i$ and $u_4v_{(^{i-1}_{i})}$ (or $u_4v_{(^{i}_{i+1})}$), and in this case, $i \equiv 0 \mod 4$ as shown in Figure 2.
\end{rem}

\begin{rem} It should be noted that the proof of the $i\equiv 1 \mod 4$ in Lemma \ref{lema4} will hold for $i\equiv 3 \mod 4$ by flipping the grid from right to left.
\end{rem}

{\tiny{
\begin{center}
\pgfdeclarelayer{nodelayer}
\pgfdeclarelayer{edgelayer}
\pgfsetlayers{nodelayer,edgelayer}
\begin{tikzpicture}
	\begin{pgfonlayer}{nodelayer}
	
	\node [minimum size=0cm,]  at (-6.6,6.5) {Figure 2. A $G \equiv G_{5,23}$ Grid with $MIM_G=28$, $u_{^1_2}v_i, i \equiv 0 \mod 4$};
	
	  \node [minimum size=0cm,draw,fill=black,circle] (1) at (-12,7) {};
		\node [minimum size=0cm,draw,fill=black,circle] (2) at  (-11.5,7) {};
		\node [minimum size=0cm,draw,circle] (3) at (-11,7) {};
		\node [minimum size=0cm,draw,circle] (4) at (-10.5,7) {};
		\node [minimum size=0cm,draw,fill=black,circle] (5) at (-10,7) {};
		\node [minimum size=0cm,draw,fill=black,circle] (6) at (-9.5,7) {};
		\node [minimum size=0cm,draw,circle] (7) at (-9,7) {};
		\node [minimum size=0cm,draw,circle] (8) at (-8.5,7) {};
		\node [minimum size=0cm,draw,fill=black,circle] (9) at (-8,7) {};
		\node [minimum size=0cm,draw,fill=black,circle] (10) at (-7.5,7) {};
    \node [minimum size=0cm,draw,circle] (11) at (-7,7) {};
		\node [minimum size=0cm,draw,fill=black,circle] (12) at (-6.5,7) {};
		\node [minimum size=0cm,draw,circle] (13) at (-6,7) {};
		\node [minimum size=0cm,draw,fill=black,circle] (14) at (-5.5,7) {};
		\node [minimum size=0cm,draw,fill=black,circle] (15) at (-5,7) {};
		\node [minimum size=0cm,draw,circle] (16) at (-4.5,7) {};
		\node [minimum size=0cm,draw,circle] (17) at (-4,7) {};
		\node [minimum size=0cm,draw,fill=black,circle] (18) at (-3.5,7) {};
		\node [minimum size=0cm,draw,fill=black,circle] (19) at (-3,7) {};
		\node [minimum size=0cm,draw,circle] (20) at (-2.5,7) {};
		\node [minimum size=0cm,draw,circle] (21) at (-2,7) {};
		\node [minimum size=0cm,draw,fill=black,circle] (22) at (-1.5,7) {};
		\node [minimum size=0cm,draw,fill=black,circle] (23) at (-1,7) {};

		\node [minimum size=0cm,draw,circle] (24) at (-12,8) {};
		\node [minimum size=0cm,draw,circle] (25) at (-11.5,8) {};
		\node [minimum size=0cm,draw,fill=black,circle] (26) at (-11,8) {};
		\node [minimum size=0cm,draw,fill=black,circle] (27) at (-10.5,8) {};
		\node [minimum size=0cm,draw,circle] (28) at (-10,8) {};
		\node [minimum size=0cm,draw,circle] (29) at (-9.5,8) {};
		\node [minimum size=0cm,draw,fill=black,circle] (30) at (-9,8) {};
		\node [minimum size=0cm,draw,fill=black,circle] (31) at (-8.5,8) {};
		\node [minimum size=0cm,draw,circle] (32) at (-8,8) {};
		\node [minimum size=0cm,draw,circle] (33) at (-7.5,8) {};
		\node [minimum size=0cm,draw,circle] (34) at (-7,8) {};
		\node [minimum size=0cm,draw,fill=black,circle] (35) at (-6.5,8) {};
		\node [minimum size=0cm,draw,circle] (36) at (-6,8) {};
		\node [minimum size=0cm,draw,circle] (37) at (-5.5,8) {};
		\node [minimum size=0cm,draw,circle] (38) at (-5,8) {};
		\node [minimum size=0cm,draw,fill=black,circle] (39) at (-4.5,8) {};
		\node [minimum size=0cm,draw,fill=black,circle] (40) at (-4,8) {};
		\node [minimum size=0cm,draw,circle] (41) at (-3.5,8) {};
		\node [minimum size=0cm,draw,circle] (42) at (-3,8) {};
		\node [minimum size=0cm,draw,fill=black,circle] (43) at (-2.5,8) {};
		\node [minimum size=0cm,draw,fill=black,circle] (44) at (-2,8) {};
		\node [minimum size=0cm,draw,circle] (45) at (-1.5,8) {};
		\node [minimum size=0cm,draw,circle] (46) at (-1,8) {};

		\node [minimum size=0cm,draw,fill=black,circle] (47) at (-12,9) {};
		\node [minimum size=0cm,draw,fill=black,circle] (48) at (-11.5,9) {};
		\node [minimum size=0cm,draw,circle] (49) at (-11,9) {};
		\node [minimum size=0cm,draw,circle] (50) at (-10.5,9) {};
		\node [minimum size=0cm,draw,fill=black,circle] (51) at (-10,9) {};
		\node [minimum size=0cm,draw,fill=black,circle] (52) at (-9.5,9) {};
		\node [minimum size=0cm,draw,circle] (53) at (-9,9) {};
		\node [minimum size=0cm,draw,circle] (54) at (-8.5,9) {};
		\node [minimum size=0cm,draw,fill=black,circle] (55) at (-8,9) {};
		\node [minimum size=0cm,draw,fill=black,circle] (56) at (-7.5,9) {};

		\node [minimum size=0cm,draw,circle] (57) at (-7,9) {};
		\node [minimum size=0cm,draw,circle] (58) at (-6.5,9) {};
		\node [minimum size=0cm,draw,circle] (59) at (-6,9) {};
		\node [minimum size=0cm,draw,fill=black,circle] (60) at (-5.5,9) {};
		\node [minimum size=0cm,draw,fill=black,circle] (61) at (-5,9) {};
		\node [minimum size=0cm,draw,circle] (62) at (-4.5,9) {};
		\node [minimum size=0cm,draw,circle] (63) at (-4,9) {};
		\node [minimum size=0cm,draw,fill=black,circle] (64) at (-3.5,9) {};
		\node [minimum size=0cm,draw,fill=black,circle] (65) at (-3,9) {};
		\node [minimum size=0cm,draw,circle] (66) at (-2.5,9) {};
		\node [minimum size=0cm,draw,circle] (67) at (-2,9) {};
		\node [minimum size=0cm,draw,fill=black,circle] (68) at (-1.5,9) {};
		\node [minimum size=0cm,draw,fill=black,circle] (69) at (-1,9) {};

		\node [minimum size=0cm,draw,circle] (70) at (-12,10) {};
		\node [minimum size=0cm,draw,circle] (71) at (-11.5,10) {};
		\node [minimum size=0cm,draw,fill=black,circle] (72) at (-11,10) {};
		\node [minimum size=0cm,draw,fill=black,circle] (73) at (-10.5,10) {};
		\node [minimum size=0cm,draw,circle] (74) at (-10,10) {};
		\node [minimum size=0cm,draw,circle] (75) at (-9.5,10) {};
		\node [minimum size=0cm,draw,fill=black,circle] (76) at (-9,10) {};
		\node [minimum size=0cm,draw,fill=black,circle] (77) at (-8.5,10) {};
		\node [minimum size=0cm,draw,circle] (78) at (-8,10) {};
		\node [minimum size=0cm,draw,circle] (79) at (-7.5,10) {};

		\node [minimum size=0cm,draw,fill=black,circle] (80) at (-7,10) {};
		\node [minimum size=0cm,draw,fill=black,circle] (81) at (-6.5,10) {};
		\node [minimum size=0cm,draw,circle] (82) at (-6,10) {};
		\node [minimum size=0cm,draw,circle] (83) at (-5.5,10) {};
		\node [minimum size=0cm,draw,circle] (84) at (-5,10) {};
		\node [minimum size=0cm,draw,fill=black,circle] (85) at (-4.5,10) {};
		\node [minimum size=0cm,draw,fill=black,circle] (86) at (-4,10) {};
		\node [minimum size=0cm,draw,circle] (87) at (-3.5,10) {};
		\node [minimum size=0cm,draw,circle] (88) at (-3,10) {};
		\node [minimum size=0cm,draw,fill=black,circle] (89) at (-2.5,10) {};
		\node [minimum size=0cm,draw,fill=black,circle] (90) at (-2,10) {};
		\node [minimum size=0cm,draw,,circle] (91) at (-1.5,10) {};
		\node [minimum size=0cm,draw,circle] (92) at (-1,10) {};

		\node [minimum size=0cm,draw,fill=black,circle] (93) at (-12,11) {};
		\node [minimum size=0cm,draw,fill=black,circle] (94) at (-11.5,11) {};
		\node [minimum size=0cm,draw,circle] (95) at (-11,11) {};
		\node [minimum size=0cm,draw,circle] (96) at (-10.5,11) {};
		\node [minimum size=0cm,draw,fill=black,circle] (97) at (-10,11) {};
		\node [minimum size=0cm,draw,fill=black,circle] (98) at (-9.5,11) {};
		\node [minimum size=0cm,draw,circle] (99) at (-9,11) {};
		\node [minimum size=0cm,draw,circle] (100) at (-8.5,11) {};
		\node [minimum size=0cm,draw,fill=black,circle] (101) at (-8,11) {};
		\node [minimum size=0cm,draw,fill=black,circle] (102) at (-7.5,11) {};
		\node [minimum size=0cm,draw,circle] (103) at (-7,11) {};
		\node [minimum size=0cm,draw,circle] (104) at (-6.5,11) {};
		\node [minimum size=0cm,draw,circle] (105) at (-6,11) {};
		\node [minimum size=0cm,draw,fill=black,circle] (106) at (-5.5,11) {};
		\node [minimum size=0cm,draw,fill=black,circle] (107) at (-5,11) {};
		\node [minimum size=0cm,draw,circle] (108) at (-4.5,11) {};
		\node [minimum size=0cm,draw,circle] (109) at (-4,11) {};
		\node [minimum size=0cm,draw,fill=black,circle] (110) at (-3.5,11) {};
		\node [minimum size=0cm,draw,fill=black,circle] (111) at (-3,11) {};
		\node [minimum size=0cm,draw,circle] (112) at (-2.5,11) {};
		\node [minimum size=0cm,draw,circle] (113) at (-2,11) {};
		\node [minimum size=0cm,draw,fill=black,circle] (114) at (-1.5,11) {};
		\node [minimum size=0cm,draw,fill=black,circle] (115) at (-1,11) {};

			\end{pgfonlayer}
				\begin{pgfonlayer}{edgelayer}
		\draw [very thick=1.00] (1) to (2);
		\draw [thin=1.00] (2) to (3);
		\draw [thin=1.00] (3) to (4);
		\draw [thin=1.00] (4) to (5);
		\draw [very thick=1.00] (5) to (6);
		\draw [thin=1.00] (6) to (7);
		\draw [thin=1.00] (7) to (8);
		\draw [thin=1.00] (8) to (9);
		\draw [very thick=1.00] (9) to (10);
		\draw [thin=1.00] (10) to (11);
		\draw [thin=1.00] (11) to (12);
		\draw [thin=1.00] (12) to (13);
		\draw [thin=1.00] (13) to (14);
		\draw [very thick=1.00] (14) to (15);
		\draw [thin=1.00] (15) to (16);
		\draw [thin=1.00] (16) to (17);
		\draw [thin=1.00] (17) to (18);
		\draw [very thick=1.00] (18) to (19);
		\draw [thin=1.00] (19) to (20);
		\draw [thin=1.00] (20) to (21);
		\draw [thin=1.00] (21) to (22);
		\draw [very thick=1.00] (22) to (23);

		\draw [thin=1.00] (24) to (25);
		\draw [thin=1.00] (25) to (26);
		\draw [very thick=1.00] (26) to (27);
		\draw [thin=1.00] (27) to (28);
		\draw [thin=1.00] (28) to (29);
		\draw [thin=1.00] (29) to (30);
		\draw [very thick=1.00] (30) to (31);
		\draw [thin=1.00] (31) to (32);
		\draw [thin=1.00] (32) to (33);
		\draw [thin=1.00] (33) to (34);
		\draw [thin=1.00] (34) to (35);
		\draw [thin=1.00] (35) to (36);
		\draw [thin=1.00] (36) to (37);
		\draw [thin=1.00] (37) to (38);
		\draw [thin=1.00] (38) to (39);
		\draw [very thick=1.00] (39) to (40);
		\draw [thin=1.00] (40) to (41);
		\draw [thin=1.00] (41) to (42);
		\draw [thin=1.00] (42) to (43);
		\draw [very thick=1.00] (43) to (44);
		\draw [thin=1.00] (44) to (45);
		\draw [thin=1.00] (45) to (46);
		
		\draw [very thick=1.00] (47) to (48);
		\draw [thin=1.00] (48) to (49);
		\draw [thin=1.00] (49) to (50);
		\draw [thin=1.00] (50) to (51);
		\draw [very thick=1.00] (51) to (52);
		\draw [thin=1.00] (52) to (53);
		\draw [thin=1.00] (53) to (54);
		\draw [thin=1.00] (54) to (55);
		\draw [very thick=1.00] (55) to (56);
		\draw [thin=1.00] (56) to (57);
		\draw [thin=1.00] (57) to (58);
		\draw [thin=1.00] (58) to (59);
		\draw [thin=1.00] (59) to (60);
		\draw [very thick=1.00] (60) to (61);
		\draw [thin=1.00] (61) to (62);
		\draw [thin=1.00] (62) to (63);
		\draw [thin=1.00] (63) to (64);
		\draw [very thick=1.00] (64) to (65);
		\draw [thin=1.00] (65) to (66);
		\draw [thin=1.00] (66) to (67);
		\draw [thin=1.00] (67) to (68);
		\draw [very thick=1.00] (68) to (69);

   	\draw [thin=1.00] (70) to (71);
		\draw [thin=1.00] (71) to (72);
		\draw [very thick=1.00] (72) to (73);
		\draw [thin=1.00] (73) to (74);
		\draw [thin=1.00] (74) to (75);
		\draw [thin=1.00] (75) to (76);
		\draw [very thick=1.00] (76) to (77);
		\draw [thin=1.00] (77) to (78);
		\draw [thin=1.00] (78) to (79);
		\draw [thin=1.00] (79) to (80);
		\draw [very thick=1.00] (80) to (81);
		\draw [thin=1.00] (81) to (82);
		\draw [thin=1.00] (82) to (83);
		\draw [thin=1.00] (83) to (84);
		\draw [thin=1.00] (84) to (85);
		\draw [very thick=1.00] (85) to (86);
		\draw [thin=1.00] (86) to (87);
		\draw [thin=1.00] (87) to (88);
		\draw [thin=1.00] (88) to (89);
		\draw [very thick=1.00] (89) to (90);
		\draw [thin=1.00] (90) to (91);
		\draw [thin=1.00] (91) to (92);

		\draw [very thick=1.00] (93) to (94);
		\draw [thin=1.00] (94) to (95);
		\draw [thin=1.00] (95) to (96);
		\draw [thin=1.00] (96) to (97);
		\draw [very thick=1.00] (97) to (98);
		\draw [thin=1.00] (98) to (99);
		\draw [thin=1.00] (99) to (100);
		\draw [thin=1.00] (100) to (101);
		\draw [very thick=1.00] (101) to (102);
		\draw [thin=1.00] (102) to (103);
		\draw [thin=1.00] (103) to (104);
		\draw [thin=1.00] (104) to (105);
		\draw [thin=1.00] (105) to (106);
		\draw [very thick=1.00] (106) to (107);
		\draw [thin=1.00] (107) to (108);
		\draw [thin=1.00] (108) to (109);
		\draw [thin=1.00] (109) to (110);
		\draw [very thick=1.00] (110) to (111);
		\draw [thin=1.00] (111) to (112);
		\draw [thin=1.00] (112) to (113);
		\draw [thin=1.00] (113) to (114);
		\draw [very thick=1.00] (114) to (115);

		\draw [thin=1.00] (1) to (24);
		\draw [thin=1.00] (24) to (47);
		\draw [thin=1.00] (47) to (70);
		\draw [thin=1.00] (70) to (93);
		
		\draw [thin=1.00] (2) to (25);
		\draw [thin=1.00] (25) to (48);
		\draw [thin=1.00] (48) to (71);
		\draw [thin=1.00] (71) to (94);
		
		\draw [thin=1.00] (3) to (26);
		\draw [thin=1.00] (26) to (49);
		\draw [thin=1.00] (49) to (72);
		\draw [thin=1.00] (72) to (95);
		
		\draw [thin=1.00] (4) to (27);
		\draw [thin=1.00] (27) to (50);
		\draw [thin=1.00] (50) to (73);
		\draw [thin=1.00] (73) to (96);
		
		\draw [thin=1.00] (5) to (28);
		\draw [thin=1.00] (28) to (51);
		\draw [thin=1.00] (51) to (74);
		\draw [thin=1.00] (74) to (97);
		
		\draw [thin=1.00] (6) to (29);
		\draw [thin=1.00] (29) to (52);
		\draw [thin=1.00] (52) to (75);
		\draw [thin=1.00] (75) to (98);
		
		\draw [thin=1.00] (7) to (30);
		\draw [thin=1.00] (30) to (53);
		\draw [thin=1.00] (53) to (76);
		\draw [thin=1.00] (76) to (99);
		
		\draw [thin=1.00] (8) to (31);
		\draw [thin=1.00] (31) to (54);
		\draw [thin=1.00] (54) to (77);
		\draw [thin=1.00] (77) to (100);
		
		\draw [thin=1.00] (9) to (32);
		\draw [thin=1.00] (32) to (55);
		\draw [thin=1.00] (55) to (78);
		\draw [thin=1.00] (78) to (101);
		
		\draw [thin=1.00] (10) to (33);
		\draw [thin=1.00] (33) to (56);
		\draw [thin=1.00] (56) to (79);
		\draw [thin=1.00] (79) to (102);
		
		\draw [thin=1.00] (11) to (34);
		\draw [thin=1.00] (34) to (57);
		\draw [thin=1.00] (57) to (80);
		\draw [thin=1.00] (80) to (103);

		\draw [very thick=1.00] (12) to (35);
		\draw [thin=1.00] (35) to (58);
		\draw [thin=1.00] (58) to (81);
		\draw [thin=1.00] (81) to (104);

		\draw [thin=1.00] (13) to (36);
		\draw [thin=1.00] (36) to (59);
		\draw [thin=1.00] (59) to (82);
		\draw [thin=1.00] (82) to (105);
		
		\draw [thin=1.00] (14) to (37);
		\draw [thin=1.00] (37) to (60);
		\draw [thin=1.00] (60) to (83);
		\draw [thin=1.00] (83) to (106);
		
		\draw [thin=1.00] (15) to (38);
		\draw [thin=1.00] (38) to (61);
		\draw [thin=1.00] (61) to (84);
		\draw [thin=1.00] (84) to (107);
		
		\draw [thin=1.00] (16) to (39);
		\draw [thin=1.00] (39) to (62);
		\draw [thin=1.00] (62) to (85);
		\draw [thin=1.00] (85) to (108);
		
	  \draw [thin=1.00] (17) to (40);
		\draw [thin=1.00] (40) to (63);
		\draw [thin=1.00] (63) to (86);
		\draw [thin=1.00] (86) to (109);
		
		\draw [thin=1.00] (18) to (41);
		\draw [thin=1.00] (41) to (64);
		\draw [thin=1.00] (64) to (87);
		\draw [thin=1.00] (87) to (110);
		
		\draw [thin=1.00] (19) to (42);
		\draw [thin=1.00] (42) to (65);
		\draw [thin=1.00] (65) to (88);
		\draw [thin=1.00] (88) to (111);
		
		\draw [thin=1.00] (20) to (43);
		\draw [thin=1.00] (43) to (66);
		\draw [thin=1.00] (66) to (89);
		\draw [thin=1.00] (89) to (112);
		
		\draw [thin=1.00] (21) to (44);
		\draw [thin=1.00] (44) to (67);
		\draw [thin=1.00] (67) to (90);
		\draw [thin=1.00] (90) to (113);
		
		\draw [thin=1.00] (22) to (45);
		\draw [thin=1.00] (45) to (68);
		\draw [thin=1.00] (68) to (91);
		\draw [thin=1.00] (91) to (114);
		
		\draw [thin=1.00] (23) to (46);
		\draw [thin=1.00] (46) to (69);
		\draw [thin=1.00] (69) to (92);
		\draw [thin=1.00] (92) to (115);

	\end{pgfonlayer}
\end{tikzpicture}
\end{center}
}}

The previous results and remarks yield the following conclusion.

\begin{cor}\label{coro1} Suppose that $m \geq 23$ and $M$ is the $MIM$ of $G$, some $G_{5,m}$ grid. Then, if for at most some positive integer $i$, $1<i<m$, $u_{({^1_2})}v_i \in M$, then, $i\equiv 0\mod 4$.
\end{cor}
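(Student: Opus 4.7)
The plan is to argue by contradiction: assume $u_{(^1_2)}v_i\in M$ for some $1<i<m$ with $i\not\equiv 0\mod 4$, and derive that $M$ cannot be a maximum induced matching.

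First, I would use the induced-matching constraint to trim the local possibilities around $V_i$. Since $u_1v_iu_2v_i\in M$, every vertex of $G$ adjacent to $u_1v_i$ or $u_2v_i$ other than those two endpoints is excluded from $V_{st}(G)$; in particular, $u_3v_i,u_1v_{i-1},u_2v_{i-1},u_1v_{i+1},u_2v_{i+1}\notin V_{st}(G)$. Because $u_3v_i$ is unsaturated, the edge $u_{(^3_4)}v_i$ cannot be in $M$, so any edge of $M$ incident to $u_4v_i$ or $u_5v_i$ must be one of $u_{(^4_5)}v_i$, $u_4v_{(^{i-1}_i)}$, $u_4v_{(^i_{i+1})}$, $u_5v_{(^{i-1}_i)}$, or $u_5v_{(^i_{i+1})}$.

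Second, I would rule out each of those five edges using the previous lemmas. Lemma \ref{lema3} forbids $u_{(^4_5)}v_i$ when $i\not\equiv 0\mod 4$; Lemma \ref{lema4} forbids the two $u_4$-vertical edges under the same hypothesis; and Lemma \ref{lema2} (with Remark \ref{rema2} used to transfer $i\equiv 1\mod 4$ to $i\equiv 3\mod 4$ by flipping the grid) forbids the two $u_5$-vertical edges: since $m\equiv 3\mod 4$, the exceptional indices $4$ and $m-3$ appearing in Lemma \ref{lema2} are themselves $\equiv 0\mod 4$, so no $i\not\equiv 0\mod 4$ escapes. Hence neither $u_4v_i$ nor $u_5v_i$ lies in $V_{st}(G)$.

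It remains to contradict the maximality of $M$ in this residual configuration. With $u_3v_i,u_4v_i,u_5v_i$ all unsaturated, the edge $u_{(^4_5)}v_i$ can be adjoined to $M$ unless at least one of $u_4v_{i\pm 1},u_5v_{i\pm 1}$ already lies in $V_{st}(G)$. A short case check on which neighbour is saturated (and by which kind of edge, noting that horizontal edges crossing into $V_i$ through rows $4$ or $5$ are excluded) shows that any such saturation reproduces an equivalent obstruction one column further away from $V_i$; partitioning $G$ at $V_i$ and applying the saturable-vertex bounds of Lemma \ref{lem2.1} and Lemma \ref{lema1} on each half then forces $|V_{st}(G)|<2\cdot MIM_G$, contradicting the maximality of $M$. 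The main obstacle is exactly this third step: the preceding lemmas dispatch only the situations in which $u_4v_i$ or $u_5v_i$ is already saturated, so handling the case in which both are free requires the auxiliary counting argument just sketched.
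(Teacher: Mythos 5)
Your first two steps coincide with what the paper actually does: it offers no separate proof of Corollary \ref{coro1}, presenting it as an immediate consequence of Lemmas \ref{lema2}, \ref{lema3}, \ref{lema4} and the surrounding remarks. That is exactly your reduction — once $u_{(^1_2)}v_i\in M$, the vertices $u_3v_i$, $u_1v_{i\pm1}$, $u_2v_{i\pm1}$ are excluded from $V_{st}(G)$, the only edges that could saturate $u_4v_i$ or $u_5v_i$ are the five you list, and these are precisely the configurations forbidden by Lemma \ref{lema3} (the edge $u_{(^4_5)}v_i$), Lemma \ref{lema4} (the two row-$4$ edges) and Lemma \ref{lema2} with the flipping remark (the two row-$5$ edges). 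Your observation that the exceptional indices $4$ and $m-3$ in Lemma \ref{lema2} are themselves $\equiv 0 \mod 4$ (since $m\equiv 3\mod 4$) is the right way to see that no $i\not\equiv 0\mod 4$ escapes.

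The problem is your third step, which you flag yourself. The case in which $u_3v_i$, $u_4v_i$, $u_5v_i$ are all unsaturated is not covered by Lemmas \ref{lema2}--\ref{lema4}, and what you offer for it is a plan rather than a proof. The opening move is fine (if none of $u_4v_{i\pm1},u_5v_{i\pm1}$ is saturated, adjoin $u_{(^4_5)}v_i$ and contradict maximality), but the claim that any saturation of one of these four neighbours ``reproduces an equivalent obstruction one column further away'' is unsupported and false as a general principle: for example $u_{(^3_4)}v_{i+1}\in M$ saturates $u_4v_{i+1}$ yet is none of the forbidden configurations — it is in fact the very edge the paper argues must occur in Case 1 of Lemma \ref{lema2} — so nothing propagates from it. Consequently the concluding inequality $|V_{st}(G)|<2\cdot MIM_G$ is asserted, not derived; establishing it would require a column-by-column count depending on $i\bmod 4$ and on which neighbour is saturated and by which edge, i.e. an argument of the same weight as each of Lemmas \ref{lema2}--\ref{lema4}. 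To be fair, the paper glosses over this residual case entirely, so your write-up is more candid about where the remaining work lies; but as submitted that step is a genuine gap.
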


\begin{lem}\label{lema6} Let $M$ be a matching of $G_{5,m}$ with $m \equiv 3 \mod 4$ and let $u_{(^1_2)}v_i, u_{(^1_2)}v_j \in M$, $1 < i< j <m$, such that $i \equiv 0 \mod 4$ and $j \equiv 0 \mod 4$, then $M$ is not an $MIM$ of $G$.
\end{lem}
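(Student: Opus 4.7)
The plan is to partition $G$ at columns $V_i$ and $V_j$ and extract a strict deficit from the naive saturation count. Writing $m=4k+3$, $i=4s$, $j=4t$ with $1\le s<t\le k$, I let $G^{|a|},G^{|b|},G^{|c|}$ be the subgrids induced by $\{V_1,\ldots,V_i\}$, $\{V_{i+1},\ldots,V_j\}$, $\{V_{j+1},\ldots,V_m\}$, of widths $4s$, $4(t-s)$, $4(k-t)+3$ respectively. Lemma \ref{lem2.1} yields $|V_{st}(G^{|a|})|\le 10s$ and $|V_{st}(G^{|b|})|\le 10(t-s)$, and Theorem \ref{thm2.2}(b)(ii) yields $|V_{st}(G^{|c|})|\le 10(k-t)+6$. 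The naive sum is $10k+6=2\,MIM_G$, so to force $M$ below the $MIM$ size it suffices to shave one off the middle bound; parity of $|V_{st}(G)|=2|M|$ will then complete the contradiction.

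The key step is to show $|V_{st}(G^{|b|})|\le 10(t-s)-1$. Split $G^{|b|}$ into $G^{|b_1|}$ induced by $\{V_{i+1},V_{i+2}\}$ and $G^{|b_2|}$ induced by $\{V_{i+3},\ldots,V_j\}$, with $|b_2|=4(t-s)-2\equiv 2\mod 4$. Since $u_{(^1_2)}v_j\in M$, the mid-row vertex $u_3v_j$ of the rightmost column of $G^{|b_2|}$ is unsaturable, so Lemma \ref{lema1} applied to $G^{|b_2|}$ (after reflecting right-to-left) yields $|V_{st}(G^{|b_2|})|\le 10(t-s)-6$. On the other side, $u_{(^1_2)}v_i\in M$ makes $u_1v_{i+1}$ and $u_2v_{i+1}$ unsaturable, so by the induced-matching property only the non-adjacent pair $u_3v_{i+1},u_5v_{i+1}$ can be jointly saturated in $V_{i+1}$; a short case analysis on the edges saturating $V_{i+1}$ then rules out more than three saturations in $V_{i+2}$, giving $|V_{st}(G^{|b_1|})|\le 5$. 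Combining: $|V_{st}(G^{|b|})|\le 5+(10(t-s)-6)=10(t-s)-1$.

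Summing the three bounds, $|V_{st}(G)|\le 10s+(10(t-s)-1)+(10(k-t)+6)=10k+5$; since $|V_{st}(G)|=2|M|$ is even, we conclude $|V_{st}(G)|\le 10k+4<10k+6=2\,MIM_G$, so $M$ cannot be an $MIM$ of $G$.

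The main obstacle will be the case analysis behind $|V_{st}(G^{|b_1|})|\le 5$. One must carefully show that the maximum occurs in the configuration where $u_3v_{i+1},u_5v_{i+1}$ are saturated by the horizontal edges $u_3v_{(^{i+1}_{i+2})},u_5v_{(^{i+1}_{i+2})}$, which in turn forces $u_3v_{i+2},u_5v_{i+2}$ into $V_{i+2}$; the remaining candidates $u_1v_{i+2},u_2v_{i+2}$ cannot both be saturated by adjacent horizontal-right edges, so at most one more vertex is saturated in $V_{i+2}$. The alternative configurations—vertical edges in $V_{i+1}$ such as $u_{(^3_4)}v_{i+1}$ or $u_{(^4_5)}v_{i+1}$, or fewer saturations in $V_{i+1}$—each give a strictly smaller total by the same induced-matching propagation used throughout Lemmas \ref{lema2}--\ref{lema4}, closing the case analysis.
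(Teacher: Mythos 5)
Your argument is essentially the proof the paper leaves to the reader: the paper gives no proof of Lemma \ref{lema6}, saying only that it ``can easily be proved using earlier techniques and Lemma \ref{lema1} and Remark \ref{rema1}'', and your three-block partition at $V_i$ and $V_j$, with the middle block split into the two columns $V_{i+1},V_{i+2}$ (at most $5$ saturated vertices, since $u_1v_{i+1},u_2v_{i+1}$ are unsaturable) and a block of width $\equiv 2 \mod 4$ handled by Lemma \ref{lema1} via the unsaturable $u_3v_j$, followed by the parity step, is a correct instantiation of exactly that counting technique. Two small caveats, neither of which is a gap relative to the paper: your bound $|V_{st}(G^{|c|})|\le 10(k-t)+6$ is the $2\cdot MIM$ bound (rather than the saturable count $10(k-t)+8$ from Lemma \ref{lem2.1}), which is precisely how the paper itself treats width $\equiv 3 \mod 4$ blocks in Lemmas \ref{lema2} and \ref{lema4}, so you inherit the same level of rigor about boundary-crossing edges; and your closing claim that the alternative configurations in $V_{i+1}$ give strictly smaller totals is slightly off (the vertical edge $u_{(^3_4)}v_{i+1}$ together with $u_{(^1_2)}v_{i+2}$ and a saturated $u_5v_{i+2}$ also attains $5$), though this does not affect the bound $|V_{st}(G^{|b_1|})|\le 5$ on which your deficit argument rests.
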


 The claim in Lemma \ref{lema6} can easily be proved using earlier techniques and Lemma \ref{lema1} and Remark \ref{rema1}.

\begin{rem} It should be noted from the previous results and from Corollary \ref{coro1} that if $M$ is the $MIM$ of $G_{5,m}$, $m \equiv 3 \mod 4$, then at most, $M$ contains two edges of the form $u_{(^1_2)}v_i$, $u_{(^1_2)}v_j$ and $j$ can only be $4$ when $i=1$ or $i$ can only be $m-3$ when $j=m$.
\end{rem}
\begin{thm}\label{thm1} Let $M$ be the $MIM$ of $G$, a $G_{5,m}$ grid and let $M$ contain $u_{(^1_2)}v_1$ and $u_{(^1_2)}v_4$ (or $u_{(^1_2)}v_{m-3}$ and $u_{(^1_2)}v_m$). Then there are at least $2k+2$ saturated vertices on $U_1 \subset G$.
\end{thm}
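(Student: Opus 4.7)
The plan is to treat only the first case, $u_{(^1_2)}v_1,\,u_{(^1_2)}v_4\in M$; the other case follows by the left--right flip invoked in Remark~\ref{rema2}. Set $m=4k+3$, so that $|M|=MIM_G=5k+3$ by Theorem~\ref{thm2.2}(b)(ii). By Corollary~\ref{coro1} together with the remark following Lemma~\ref{lema6}, the two hypothesised edges exhaust the $u_{(^1_2)}v_j$-edges of $M$. Hence any $u_1v_j\in U_1$ with $j\notin\{1,4\}$ can be saturated only by a horizontal edge $u_1v_{j-1}u_1v_j$ or $u_1v_ju_1v_{j+1}$ of $M$, and such an edge must have both ends in $\{V_6,\ldots,V_m\}$ since $u_1v_5$ is adjacent to the saturated $u_1v_4$. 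Writing $h$ for the number of horizontal edges of $M$ contained in $U_1$, one has $|U_1\cap V_{st}(G)|=2+2h$, so the theorem reduces to showing $h\geq k$.

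Next I would split $G$ into the left block $G^{|4|}$ (induced by $V_1,\ldots,V_4$) and the right block $G^{|m-4|}\cong G_{5,4k-1}$ (induced by $V_5,\ldots,V_m$), writing $M=M_A\cup M_B\cup M_C$ with $M_A,M_B$ the portions of $M$ lying in each block and $M_C$ the (at most one) spanning horizontal edge $u_iv_4u_iv_5$. Using $MIM(G_{5,4})=5$ (from Theorem~\ref{thm2.2}(a) via $G_{5,4}\cong G_{4,5}$) and $MIM(G_{5,4k-1})=5k-2$ (from Theorem~\ref{thm2.2}(b)(ii), since $4k-1\equiv3\mod4$), together with the observation that a spanning edge can only lie in rows $i\in\{4,5\}$ (rows $1,2$ being blocked by $u_{(^1_2)}v_4\in M$ and row $3$ by adjacency with $u_2v_4$), a short case analysis of the kind employed in Lemmas~\ref{lema3}--\ref{lema4} forces $|M_A|=5$, $|M_C|=0$, and $|M_B|=5k-2$. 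In particular $M_B$ is a maximum induced matching of $G^{|m-4|}$.

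I then analyse $M_B$ inside $G^{|m-4|}$. Corollary~\ref{coro1} applied to $M_B$, combined with (i) both admissible $u_{(^1_2)}v$-slots being consumed by $M_A$ and (ii) the boundary forbiddance of $u_1v_5$, shows that $M_B$ contains no $u_{(^1_2)}v_j$-edges. Deleting row $1$ from $G^{|m-4|}$ gives $G'\cong G_{4,4k-1}$ with $MIM(G')=4k-1$, so the restriction of $M_B$ to $G'$ has size $5k-2-h$, yielding the preliminary bound $h\geq k-1$. To upgrade this to $h\geq k$, suppose for contradiction $h=k-1$; then the rows $2$--$5$ part of $M_B$ attains $MIM(G')=4k-1$, while simultaneously avoiding the set $F\subset V(G')$ consisting of the $2(k-1)$ row-$2$ mirror vertices $u_2v_{j_\ell},u_2v_{j_\ell+1}$ of the row-$1$ edges together with the boundary vertex $u_2v_5$, so $|F|=2k-1$. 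Using Lemma~\ref{lem2.1} to compute $|V_{sb}(G')|=8k-2$, and noting $|V_{st}|=|V_{sb}|$ for every MIM of $G'$ (so no FSV in any MIM of $G'$), a saturable-vertex count in the style of Lemma~\ref{lema1} shows that each of the $2k-1$ forbidden vertices in $F$ must lie in $V_{sb}$ of the hypothetical MIM and therefore strips at least one edge from $MIM(G')$, contradicting the required $4k-1$ edges.

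The main obstacle is this last FSV-style bookkeeping: the forbidden set $F$ mixes boundary data from $M_A$ (the vertex $u_2v_5$) with the row-$1$ mirror pairs (whose positions depend on $M$ itself), and one must show, uniformly in the $k-1$ admissible placements of the row-$1$ edges, that the combined constraints cost at least one edge inside $G'$. Once this is in hand, $h\geq k$ follows at once, and therefore $|U_1\cap V_{st}(G)|\geq 2+2k=2k+2$, as required.
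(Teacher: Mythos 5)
Your reduction to ``$h\geq k$ horizontal row-one edges'' is reasonable, and the preliminary count giving $h\geq k-1$ (via $|M_B|\leq MIM(G_{4,4k-1})+h=4k-1+h$) is sound, but the proof is not complete: the decisive step, ruling out $h=k-1$, is exactly the part you leave as an ``obstacle,'' and it does not follow from the facts you have assembled. Knowing that $|V_{sb}(G_{4,4k-1})|=8k-2=2\,MIM(G_{4,4k-1})$ tells you a hypothetical MIM of $G'$ has no free saturable vertices, but you still must show that the forbidden set $F$ (the mirror vertices $u_2v_{j_\ell},u_2v_{j_\ell+1}$ below the $k-1$ row-one edges, plus $u_2v_5$) necessarily destroys at least one edge of every MIM of $G'$, uniformly over all admissible placements of the row-one edges; nothing in Lemmas \ref{lema1}--\ref{lema4} or Remark \ref{rem1} delivers this, since those results are about $5$-row blocks with prescribed $u_{(^1_2)}v_i$-type edges, not about a $G_{4,\cdot}$ grid with scattered forbidden row-two vertices. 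In addition, the intermediate claim $|M_A|=5$, $|M_C|=0$, $|M_B|=5k-2$ is only asserted (``a short case analysis''): since $5+(5k-3)+1$ and $4+(5k-2)+1$ also sum to $5k+3$, the configurations with a spanning edge $u_4v_{(^4_5)}$ or $u_5v_{(^4_5)}$ have to be eliminated explicitly, and your later use of ``$M_B$ is a maximum induced matching of $G^{|m-4|}$'' depends on this.

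It is worth seeing why the paper does not meet this obstacle: it cuts after column $5$, not column $4$. The block on $\{V_6,\dots,V_m\}$ then has $4k-2\equiv 2 \bmod 4$ columns, so by Lemma \ref{lem2.1} its saturable capacity is $\frac{5(4k-2)+2}{2}=10k-4$, which is exactly the number of saturated vertices this block must carry for $M$ to reach $2\,MIM_G=10k+6$; since the rows-$2$--$5$ part $G_{4,4k-2}$ can hold at most $8k-4$ saturated vertices, at least $2k$ saturated vertices sit on row one of that block, and together with $u_1v_1,u_1v_4$ this gives $2k+2$ at once. The counting there is in saturated vertices with a block chosen so the bound is tight, whereas your column-$4$ cut makes the two blocks' MIMs already sum to $5k+3$, leaving a one-unit deficit that your argument never recovers. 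As it stands, the proposal is an outline with a genuine gap at its crux; either supply the $F$-argument in full (and the $|M_C|=0$ case analysis), or switch to the saturated/saturable count with the cut after column $5$.
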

\begin{proof} For $u_{(^1_2)}v_1$ and $u_{(^1_2)}v_4$ to be in $M$, either $u_{(^4_5)}v_4 \in M$ or $u_5v_{(^3_4)} \in M$. Now, let $\left\{V_6,V_7, \cdots,V_m\right\}$ induce $G^{|m(1)|} \subset G$. Clearly, $|m(1)|\equiv 2 \mod 4$ and $|V_{st}(G^{|m(1)|})|=10k-4$. Let $G^{|m(1)|}\backslash \left\{u_1v_6,u_1v_7,\cdots,u_1v_m\right\}$ induce $G^{|m(2)|} \subset G^{|m(1)|}$. Then, $G^{|m(2)|}$ is a $G_{4,m-5}$ subgrid of $G^{|m(1)|}$. Now, $|V_{st}(G^{|m(2)|})| \leq 8k-4$. Thus for $V(U_1) \subset V(G^{|m(1)|})$, $|V(U)| \geq 2k$. Thus, $U_1$ contains at least $2k+2$ (i.e, $\frac{m-1}{2}$) saturated vertices.
\end{proof}

Next we investigate $G_{3,m}$, where $m \equiv 3 \mod 4$.

\begin{lem} Suppose that $G$ is a $G_{3,m}$ grid with $m \equiv 3 \mod 4$ and $M$ is an induced matching of $G_{3,m}$, with $\left\{u_{(^1_2)}v_i, u_{(^1_2)}v_{i+2},u_{(^1_2)}v_j, u_{(^1_2)}v_{j+2}\right\} \in M$ and $i+2 \geq j$. Then $M$ is not a $MIM$ of $G$,
\end{lem}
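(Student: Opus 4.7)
The plan is to argue by contradiction using the partition-and-count technique employed throughout this section. Assume $M$ is an $MIM$ of $G = G_{3,m}$ with $m = 4k+3$ containing the four vertical edges listed, so that $|V_{st}(G)| = 2\cdot MIM_G = 6k+4$ by Theorem~\ref{thm2.2}. I will derive an upper bound on $|V_{st}(G)|$ strictly smaller than $6k+4$, giving the required contradiction.

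Partition $G$ into three subgrids $G^{|L|}, G^{|C|}, G^{|R|}$ induced respectively by $\{V_1,\ldots,V_{i-1}\}$, $\{V_i,\ldots,V_{j+2}\}$, and $\{V_{j+3},\ldots,V_m\}$. Since $i+2 \geq j$, the middle piece $G^{|C|}$ covers a short range of columns, so its saturated vertices can be enumerated directly. Each given edge saturates the two vertices in rows $1$ and $2$ of its column $\ell \in \{i,i+2,j,j+2\}$. Because $G$ has only three rows, the crucial observation is: for every such column $\ell$, the vertex $u_3v_\ell$ is blocked (it is adjacent to the saturated $u_2v_\ell$); and in every column $\ell \pm 1$ immediately adjacent, $u_1v_{\ell\pm1}$ and $u_2v_{\ell\pm1}$ are blocked for the same reason. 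The remaining saturability candidates inside $G^{|C|}$ are row-$3$ vertices of intermediate columns, but each such candidate's only possible matches lie in row-$3$ of the columns $\ell$, which are themselves already blocked. This pins $|V_{st}(G^{|C|})|$ exactly, and comparison with an alternative arrangement using horizontal edges in rows $1$ and $3$ over the same span shows $|V_{st}(G^{|C|})|$ is strictly smaller than what an optimal matching would contribute there.

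For the two outer pieces I apply Lemma~\ref{lem2.1}(b) with $n=3$ according to the residues of $i-1$ and $m-j-2$ modulo $4$, further tightening each bound by noting that the boundary columns $V_{i-1}$ and $V_{j+3}$ have their top two vertices already blocked by the extremal vertical edges in $V_i$ and $V_{j+2}$. Summing the three contributions and simplifying using $m = 4k+3$ yields $|V_{st}(G)| < 6k+4$, contradicting the assumption that $M$ is an $MIM$ of $G$.

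The main obstacle will be the case analysis on $i \bmod 4$, $j \bmod 4$, and the exact value of $j-(i+2)$ inside the admissible range, since each combination selects a different variant of Lemma~\ref{lem2.1} for the outer pieces and a slightly different count for the middle piece. The computations follow the template of the proofs of Lemmas~\ref{lema2} and \ref{lema4}, but are notably simpler because $n=3$ makes the outer pieces narrow enough that $|V_{st}|$ is essentially pinned by parity; no finer structural argument (of the kind needed for $G_{5,m}$) should be required.
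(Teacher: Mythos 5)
Your plan has a genuine gap at its decisive step: the claim that summing the per-piece bounds ``yields $|V_{st}(G)| < 6k+4$'' is asserted but never computed, and in fact the naive sum does not give a strict inequality. For example, with $j=i+2$ and $i\equiv 1 \bmod 4$, taking the middle piece as $\{V_i,\dots,V_{i+4}\}$ (so it contributes exactly the $6$ vertices saturated by the three vertical edges) and bounding the outer pieces by Lemma \ref{lem2.1} gives $\tfrac{3(i-1)}{2}+6+\tfrac{3(m-i-4)+2}{2}=\tfrac{3m-1}{2}=6k+4$, i.e.\ exactly the value an $MIM$ must attain, not less; even after enlarging the middle block to absorb the blocked boundary columns, the residue class $i\equiv 0 \bmod 4$ again sums to $6k+4$. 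So the contradiction hinges entirely on the ``further tightening'' you mention (that blocking the top two vertices of $V_{i-1}$ and $V_{j+3}$ strictly lowers the outer pieces' counts), and that is exactly the nontrivial structural content you would have to prove; it does not follow from Lemma \ref{lem2.1} alone. A second flaw is the sentence comparing the middle piece with ``an alternative arrangement using horizontal edges in rows $1$ and $3$'': showing that $M$ is locally suboptimal on a subgrid proves nothing, since a maximum induced matching can perfectly well be locally suboptimal on a piece; only a global count can give the contradiction.

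For comparison, the paper's proof is a two-line global count that avoids partitioning altogether: by Lemma \ref{lem2.1}(b), $|V_{sb}(G)|=\tfrac{3m+1}{2}=6k+5$, while an $MIM$ saturates $2\,MIM_G=6k+4$ vertices by Theorem \ref{thm2.2}, so $G$ can contain at most one free saturable vertex; but the four prescribed edges force both $u_3v_{i+1}$ and $u_3v_{j+1}$ (respectively $u_3v_{i+1}$ and $u_3v_{i+3}$ when $j=i+2$) to be saturable yet unmatchable, since every neighbour of each is adjacent to a saturated vertex in rows $1$--$2$. Two such $FSV$s exceed the allowance of one, giving the contradiction immediately and uniformly in $i$, $j$ and their residues. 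If you want to salvage your route, you would need to prove the boundary-column tightening as a separate lemma (in the spirit of Remark \ref{rem1}); otherwise the global $FSV$ count is both shorter and complete.
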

 \begin{proof} Suppose $i+2 \geq j$. Since $m=4k+3$, $|V_{sb}(G)|=6k+5$ and $|V_{st}(G)|=6k+4$. Thus, $G$ contain at most one $FSV$. Now from the conditions in the hypothesis, it is clear that $u_3v_{i+1}$ and $u_3v_{j+1}$ are $FSV$s in $G$, which is a contradiction. Same argument hold if $i+2=j$ since both $u_3v_{i+1}$ and $u_3v_{i+3}$ are $FSV$s in $G$.
 \end{proof}

\begin{rem}\label{rem1} Suppose that $G_n$ is $G_{3,n}$, a subgrid of $G_{3,m}$ and induced by $\left\{V_{i+1}, V_{i+2}, \cdots, V_{i+n}\right\}$ and $G'$ is a subgraph of $G$, with $G'=G_n+\left\{u_3v_i,u_3v_{i+n+1}\right\}$, then the following are easy to verify. For
\begin{enumerate}
\item $n \equiv 0 \mod 4$, $|V_{st}(G')| \leq |V_{sb}(G_n)|+2$
\item $n \equiv 1 \mod 4$, $|V_{st}(G')| \leq |V_{sb}(G_n)|+2$
\item $n \equiv 2 \mod 4$, $|V_{st}(G')|=|V_{sb}(G_n)|$
\item $n \equiv 3 \mod 4$, $|V_{st}(G')| \leq |V_{sb}(G_n)|+1$
\end{enumerate}
\end{rem}

\begin{lem}\label{lema9} Let $u_{(^1_2)}v_j, u_{(^1_2)}v_{j+3}, u_{(^1_2)}v_k, u_{(^1_2)}v_{k+3}, u_{(^1_2)}v_l, u_{(^1_2)}v_{l+3}$ be in $M$an induced matching of $G$ a $G_{3,m}$ grid and $m \equiv 3 \mod 4$. Then $M$ is not $MIM$ of $G$.
\end{lem}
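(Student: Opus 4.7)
The plan is to derive a contradiction from the assumption that $M$ is a $MIM$ of $G$. By Theorem~\ref{thm2.2}(b)(ii), when $m=4k+3$ one must have $|V_{st}(G)|=6k+4$, so I will bound $|V_{st}(G)|$ over a partition of $G$ into the three ``blocks'' forced by the six given edges and the complementary regions, and show the bound is strictly less than $6k+4$.

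First, since $u_{(^1_2)}v_{j+3}$ and $u_{(^1_2)}v_{k}$ are both in the induced matching $M$, the vertices $u_1v_{j+3}$ and $u_1v_{k}$ are non-adjacent, forcing $k\ge j+5$, and similarly $l\ge k+5$. Thus the three blocks $B_1=\{V_j,\dots,V_{j+3}\}$, $B_2=\{V_k,\dots,V_{k+3}\}$, $B_3=\{V_l,\dots,V_{l+3}\}$ are pairwise disjoint with at least one column between consecutive blocks. On each block, every vertex that is not already saturated by the two forced $u_{(^1_2)}$-edges is adjacent to one of the saturated $u_1$ or $u_2$ vertices, with the sole exceptions $u_3v_{j+1}$ and $u_3v_{j+2}$ (and their analogues), which can only be saturated together through the single edge $u_3v_{(^{j+1}_{j+2})}$. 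Hence each block contributes at most $6$ saturated vertices, and the twelve block columns collectively contribute at most $18$.

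Next, set $R_0=\{V_1,\dots,V_{j-1}\}$, $R_1=\{V_{j+4},\dots,V_{k-1}\}$, $R_2=\{V_{k+4},\dots,V_{l-1}\}$, $R_3=\{V_{l+4},\dots,V_m\}$, with lengths $n_0,n_1,n_2,n_3$ summing to $4k-9$. In each $R_i$ the columns adjacent to a block have $u_1$ and $u_2$ non-saturable, since these vertices neighbour a saturated $u_1$ or $u_2$ of a forced edge. For the interior gaps $R_1$ and $R_2$ the remaining saturable set is therefore exactly the structure $G'=G_{3,n_i-2}+\{u_3v_a,u_3v_b\}$ of Remark~\ref{rem1}, yielding
\[
|V_{st}(R_i)|\;\le\; |V_{sb}(G_{3,n_i-2})|+c_i,\qquad c_i\in\{0,1,2\},
\]
with $c_i$ determined by $(n_i-2)\bmod 4$. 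A one-sided analogue of Remark~\ref{rem1} gives a similar bound on $R_0$ and $R_3$, since those regions have only one block-adjacent column restricting $u_1,u_2$.

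Summing the four per-region bounds via the explicit values of $|V_{sb}(G_{3,n})|$ from Lemma~\ref{lem2.1} and using the length identity $n_0+n_1+n_2+n_3=4k-9$, one obtains $\sum_{i=0}^{3}|V_{st}(R_i)|\le 6k-14-\delta$ for some nonnegative integer $\delta$; combined with the block contribution this yields $|V_{st}(G)|\le 6k+4-\delta$, and the contradiction follows once $\delta\ge 1$. The main obstacle is proving strict positivity of $\delta$ uniformly: the residue corrections from Remark~\ref{rem1} could a priori add as much as two per region, so one must use the residue constraint $n_0+n_1+n_2+n_3\equiv 3\pmod 4$ together with the fact that each of the six block-adjacent columns forfeits two would-be saturable vertices, in order to check that no admissible residue tuple $(n_0,n_1,n_2,n_3)\pmod 4$ and no degenerate small-gap configuration (where $n_1$ or $n_2\in\{1,2\}$) allows $\delta=0$.
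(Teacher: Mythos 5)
There is a genuine gap, on two levels. First, your opening reduction is not valid for the statement as the paper uses it: from the hypothesis you deduce that $u_1v_{j+3}$ and $u_1v_k$ being non-adjacent ``forces $k\ge j+5$'' (and likewise $l\ge k+5$), so your three blocks are pairwise separated by at least one free column. But the hypothesis also admits the chained configurations $k=j+3$ and/or $l=k+3$, in which some of the six listed edges coincide and the $u_{(^1_2)}$-edges sit at columns such as $j,\,j+3,\,j+6,\,j+9$. The paper's proof treats exactly these as Case 1 ($j+3=k$, $k+3=l$) and Case 3 (one equality), and they are the cases that matter for the later application: a chain of three such edges (columns $17,20,23$ in Figure~4) does occur in an $MIM$, and Lemma~\ref{lema9} is what rules out a fourth. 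Your argument, which silently assumes full separation, covers only the paper's Case 2 and therefore does not prove the lemma as stated or as subsequently used.

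Second, even within the separated case your proof is a plan rather than a proof: the decisive inequality $\sum_i |V_{st}(R_i)|\le 6k-14-\delta$ with $\delta\ge 1$ is precisely what you flag as ``the main obstacle'' and never establish. Since the block columns can contribute the full $18$ and Remark~\ref{rem1} allows a $+2$ correction per gap when the gap length is $\equiv 0$ or $1 \bmod 4$, a naive worst-case sum does not fall below the target $6k+4$; the contradiction only emerges after a residue analysis over the admissible tuples $(n_0,n_1,n_2,n_3)$ (together with correct one-sided analogues of Remark~\ref{rem1} for $R_0,R_3$, which you assert but do not verify). This is exactly the bookkeeping the paper carries out explicitly in its Case 2 (choosing the parities of the end segments, splitting the middle segment as $|m(8)|+|m(9)|$ by residue, and summing to get $|V_{st}(G)|\le 6p+2<6p+4$). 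So while your decomposition into forced blocks and gap regions is essentially the paper's own strategy, the argument as written is incomplete both in case coverage and in the counting that produces the contradiction.
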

\begin{proof} Case 1: Let $m=4p+3$, $j+3=k$ and $l=k+3$. Suppose $G^{|m(1)|}$ is a subgraph of $G$, induced by $\left\{V_{j-1}, V_j, \cdots, V_{i+4}\right\}$. Then $|m(1)|=12$, with $u_3v_{j-1}$ and $u_3v_{i+4}$ as $FSV$s. For one of $u_3v_{j-1}$ and $u_3v_{i+4}$ to be relevant for $M$ to be $MIM$ of $G$, say $u_3v_{j-1}$, then for $G^{|m(2)|}$, induced by $\left\{V_1, V_2, \cdots, V_{j-2}\right\}$, $|V_{sb}(G^{|m(2)|})|$ must be odd, which can only be if $j-2 \equiv 3 \mod 4$. So, suppose $j-2 \equiv 3 \mod 4$, then $|V_{st}(G^{|m(2)|})+u_3v_{j-1}| \leq |V_{sb}(G^{|m(2)|})|+1=6q+6$, where $|m(2)|=4q+3$, for $q \geq 1$, since $|m(1)|=12$ and $|n(2)| \equiv 3 \mod 4$. Now let $G^{|m(3)|}=G^{|m(1)|} \cup G^{|m(2)|}$, where $|m(3)|=|m(1)|+|m(2)| \equiv 3 \mod 4$ and $G^{|m(4)|} \subset G$ be defined as a subgrid of $G$ induced by $\left\{V_{i+5}, V_{i+6}, \cdots, V_m\right\}$. Clearly, $|m(4)|\equiv 0 \mod 4$. Since $|V_{sb}(G^{|m(4)|})|= |V_{st}(G^{|m(4)|})|$, which is even, then $|V_{st}(G^{|m(4)|}+u_3v_{i+4})|=|V_{st}(G^{|m(4)|})|=6p-6q-18$. Now, it can be seen that $|V_{st}(G^{|m(1)|}) \backslash \left\{u_3v_{j-1}, u_3v_{l+4}\right\}|=14$. Therefore, $|V_{st}(G)| \leq 6p+2$ instead of $6p+4$, and hence a contradiction.\\
Case 2: Suppose that $j+3 < k$ and $k+3 < l$. As in Case 1 and without loss of generality, let $j-2 \equiv 3 \mod 4$ and let $G^{|m(2)|}$ still be induced by $\left\{V_1, V_2, \cdots, V_{j-2}\right\}$. Also, let $G^{|m(4)|}$ be induced by $\left\{V_{l+5}, V_{l+6}, \cdots, V_m\right\}$, and set $|m(4)| \equiv 3 \mod 4$. Thus, $u_3v_{j-1}$ and $u_3v_{i+4}$ are both relevant for $M$ to be a $MIM$ of $G$ and $|V_{st}(G^{|m(2)|}+V_{j-1})|=|V_{sb}(G^{|m(2)|})|+1$ and $|V_{st}(G^{|m(4)|}+V_{l+4})|=|V_{sb}(G^{|m(4)|})|+1$. Set $G^{|m(2)|}+V_{j-1}=G^{|m(2^+)|}$ and set $G^{|m(4)|}+V_{i+4}=G^{|m(4^+)|}$ and let $\left\{V_{j},V_{j+1}, V_{j+2},V_{j+3}\right\}$ induce $G^{|m(5)|}$ while $\left\{V_i,V_{i+1},V_{i+2}, V_{i+3}\right\}$ induces $G^{|m(6)|}$. Furthermore, let $G^{|m(5^+)|}=G^{|m(5)|}+V_{j+4}$ and $G^{|m(6^+)|}$ contain, say, $h$ columns of $V_i$ in all, where $h \equiv 2 \mod 4$. Therefore, for $G^{|(m(7))|}= G \backslash \left\{G^{|m(2^+)|} \cup G^{|m(4^+)|} \cup G^{|m(5^+)|} \cup G^{|m(6^+)|} \right\}$, $|m(7)|=m-h = b \equiv 1 \mod 4$. Let $b=4a+1$, for some positive integer $a$ and let $G^{|m(4)|} \subset G^{|m(7)|}$, where $G^{|m(7)|}$ is induced by $\left\{V_k, V_{k+1}, V_{k_2}, V_{k+3}\right\}$. Certainly, $u_3v_{k-1}, u_3v_{k+4},u_3v_{j+4}, u_3v_{l-1} \in V_{sb}(G)$. Now, let $G^{|(4)|}$ be induced by $\left\{V_k, V_{k+1}, V_{k+2}, V_{k+3}\right\}$ and $G^{|4^{++}|}$ be induced by $G^{|(4)|}+ \left\{V_{k-1},V_{k+4}\right\}$, with $|4++|=6$. So, $b-6 \equiv 3 \mod 4$, which is odd and thus can only be the sum of an even and an odd positive integer. Therefore, let $G^{|m(8)|}$ and $G^{|m(9)|}$ be induced by $\left\{V_{j+5}, V_{j+6}, \cdots, V_{k-2}\right\}$ and  $\left\{V_{j+5}, V_{j+6}, \cdots, V_{l-2}\right\}$, with $|m(8)|+|m(9)|=b$. Suppose thus, that $|m(8)| \equiv 0 \mod 4$, then, $|m(9)| \equiv 3 \mod 4$ and suppose $|m(8)| \equiv 1 \mod 4$, then $|m(9)| \equiv 2 \mod 4$. For $|m(8)| \equiv 0 \mod 4$, let $G^{|m(10)|}=G^{|m(2^+)|+|m(5^+)|}$ be $G^{|m(2^+)|} \cup G^{|m(5^+)|}$ and $G^{|m(11)|}=G^{|m(6^+)|+|m(4^+)|}$ be $G^{|m(6^+)|} \cup G^{|m(4^+)|}$, where $|m(2^+)|+|m(5^+)|=4q+9$ and $|m(4^+)|+|m(6^+)|=4r+9$, where $|m(4)|=4r+3$. Therefore, as defined, $b=|m(7)|=4p-4q-4r-15$ and thus $b-6=4(p-q-r-6)+3$.  Set $p-q-r-6=f$. Now, for $|m(8)|$ and $|m(9)|$, if $|m(8)|=4g$, for some positive integer $g$, then $|m(9)|=4(f-g)+3$. Next we sum the maximal values of the subgrid of $G$ as follows:
$|V_{st}(G)| \leq |V_{st}(G^{|m(2^+)|} \cup G^{|m(5)|})|+ |V_{st}(G^{|m(8)|}+\left\{u_3v_{j+4},u_3v_{k-1}\right\})|$ + $|V_{st}(G^{|m(4)|})+$$|V_{st}(G^{|m(9)|}+\left\{u_3v_{k+4},u_3v_{l-1}\right\})|+$ $|V_{st}(G^{|m(6)|}| \cup G^{|m(4^+)|})$$\leq 6p+2$, which is less than $6p+4$ and hence a contradiction. For $|m(8)| \equiv 1 \mod 4$, and $|m(9)| \equiv 2 \mod 4$, we have $|m(8)|=4g+1$ and hence $|m(9)|=4(f-g)+2$ and $|V_{st}(G^{|m(9)|}+ \left\{u_3v_{k+4},u_3v_{l-1}\right\})|=6(f-g)+4$ and thus, $|V_{st}(G)| \leq 6p+2$.\\
Case 3: Suppose $j+3=k$ or $k+3=i$. Without loss of generality, let $j+3=k.$ Suppose as in Case 2, $j-2 \equiv 3 \mod 4$ and $m-(i+4) \equiv 3 \mod 4$. Let $G^{|n(1)|} \subset G$, a $G_{3,9}$ subgrid of $G$ be induced by $\left\{V_{j-1},v_j, \cdots, V_{j+7}\right\}$. Then for $G^{|n(2)|}=G^{|m(2)|} \cup G^{|n(1)|}$, $|n(2)|=|m(2)|+|n(1)|$,  $|n(2)| \equiv 0 \mod 4$. Likewise, suppose $\left\{V_{i-1}, V_i, \cdots, V_m\right\}$
induces $G^{|n(3)|}$, for which $|n(3)| \equiv 1 \mod 4$. If $|n(2)|$ and $|n(3)|$ are $4q$ and $4r+1$ respectively, then $|n(4)|\equiv 2 \mod 4$. So far, $G^{|n(4)|}$, is induced by $\left\{V_{i+8},V_{i+9}, \cdots, V_{l-2}\right\}$ and by Remark \ref{rem1}, $|V_{st}(G^{|n(4)|})+\left\{u_3v_{j+7},u_3v_{l-1}\right\}|=|V_{sb}(G^{|n(4)|})|$. By a summation similar to the one at the end of case 2, $|V_{st}(G)| \leq |V_{st}G^{|n(2)|}|+ |V_{st}(G^{|n(4)|})|+ |V_{st}(G^{|n(3)|})|\leq 6p+2$.
\end{proof}

\begin{rem} By following the technique employed in Lemma \ref{lema9}, it can be established that given $u_{(^1_2)}v_i,u_{(^1_2)}v_{i+2} \in M$ and $u_{(^1_2)}v_j,u_{(^1_2)}v_{j+2} \in M$ of $G$, a $G_{3,m}$ grid, $m \equiv 3 \mod 4$, $i+2 \leq j$, then $M$ is not a $MIM$ of $G$.
\end{rem}
\begin{rem} Let $M$ be an induced matching of $G$, a $G_{3,m}$ grid, and $i$ be some fixed positive integer. Suppose $u_{(^1_2)}v_{i+8(n)} \in M$, for all non-negative integer $n$ for which $1 \leq i+8(n) \leq m$. Let $M$ be the maximum induced matching of $G$. Then,
\begin{enumerate}
	\item if $i > 1$, then $i-1$ is either $2,3,4$ or $6$
	\item if $i+8(n) < m$, for the maximum value of $n$, then $m-(i+8(n))$ is either $2,3,4$ or $6$.
\end{enumerate}
\end{rem}
Based on the results so far, we note that if $M$ is the $MIM$ of $G$, a $G_{3,m}$ grid, $m \equiv 3 \mod 4, m \geq 11$, the maximum number of edges of the type $u_{(^1_2)}v_k$ that is contained in $M$, $k$, a positive integer, is $k+2$ when $m =8k+3$ and $k+3$ when $m=8k+7$.

It can be easily established that for $H$ that is a $G_{k,m}$ grid, with $k \equiv 0 \mod 4$ and $m \equiv 3 \mod 4$, which is induced by $U_1,U_2,\cdots,U_k$, if $M_1$ is the $MIM$ of $H$, then, the least saturated vertices in $U_k$ is $\frac{m-1}{2}$. The next result describes the positions of the members of $M_1$ in $E(H)$ if $U_k$ contains $\frac{m-1}{2}$ saturated vertices.

\begin{lem}\label{lema10} Let $H$ be a $G_{k,m}$ grid with $k \equiv 0 \mod 4$ and $m \equiv 3 \mod 4$ and let $U_k$ contain the least possible, $\frac{m-1}{2}$, saturated vertices for which $N$ remains $MIM$ of $H$. Then, for any adjacent vertices $v',v''\in U_k$, edge $v'v'' \notin M$.
\end{lem}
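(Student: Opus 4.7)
The plan is a proof by contradiction. Assume $M$ is an $MIM$ of $H$ with exactly $\frac{m-1}{2}$ saturated vertices in $U_k$ and that some edge $e=u_kv_i\,u_kv_{i+1}$ of $M$ lies entirely inside $U_k$. From the induced matching condition it is immediate that $u_kv_{i-1}$, $u_kv_{i+2}$, $u_{k-1}v_i$ and $u_{k-1}v_{i+1}$ are unsaturated in $M$, and that no $M$-edge is incident to $u_{k-1}v_i$ or $u_{k-1}v_{i+1}$.

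First I would record a parity step. Writing $|V_{st}(M)\cap U_k|=2a+b$, where $a$ counts horizontal $M$-edges inside $U_k$ and $b$ counts vertical $M$-edges joining $U_k$ to $U_{k-1}$, and using that $\tfrac{m-1}{2}$ is odd (since $m\equiv 3\bmod 4$), one obtains that $b$ is odd, so in particular $b\ge 1$. Thus $e$ is one of a small number of horizontal $U_k$-edges, and the remainder of $M$ lives essentially in the $G_{k-1,m}$ subgrid $H\setminus U_k$ (with $k-1\equiv 3\bmod 4$) together with an odd number of vertical bridges.

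The central step is a local swap: replace $e$ by a single edge $f$ that uses at most one vertex of $U_k$ and set $M':=(M\setminus\{e\})\cup\{f\}$. Natural candidates for $f$ are the horizontal edge $u_{k-1}v_{(^i_{i+1})}$ in row $k-1$ and the vertical edges $u_{k-1}v_iu_{k-2}v_i$, $u_{k-1}v_{i+1}u_{k-2}v_{i+1}$; if none of those work, one shifts by one column to $u_{k-1}v_{i-1}u_{k-2}v_{i-1}$ or $u_{k-1}v_{i+2}u_{k-2}v_{i+2}$. Because $u_{k-1}v_i$ and $u_{k-1}v_{i+1}$ are $M$-isolated, checking that some such $f$ is compatible with $M\setminus\{e\}$ as an induced matching is routine in most local configurations. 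Whenever a valid $f$ exists, $|M'|=|M|$ and $|V_{st}(M')\cap U_k|\le\tfrac{m-1}{2}-1$, contradicting the minimality hypothesis on $|V_{st}(M)\cap U_k|$.

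The main obstacle is the ``fully blocked'' configuration where every candidate $f$ fails, namely when $u_{k-2}v_i$ and $u_{k-2}v_{i+1}$ are both saturated in $M$, and in addition $u_{k-1}v_{i-1}$ and $u_{k-1}v_{i+2}$ are saturated. In that case I would zoom in on the $G_{5,\cdot}$-type or $G_{k-1,\cdot}$-type subgrid around columns $i-1,\dots,i+2$, and use $k\equiv 0\bmod 4$ to control the row parities as one steps down through $U_{k-2},U_{k-3},\dots$, so that some induced window has dimension $\equiv 2\bmod 4$. Applying Lemma \ref{lema1} together with Remark \ref{rema1} and the exact vertex counts from \cite{RMG1} then forces $|V_{st}(M)|<2\cdot\tfrac{km}{4}$, contradicting the fact that $M$ is an $MIM$ of $H$. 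This eliminates the degenerate case and completes the argument.
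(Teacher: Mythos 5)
Your overall strategy (a local exchange: delete the horizontal edge $e=u_kv_iu_kv_{i+1}$ and reinsert a replacement edge $f$ meeting $U_k$ in at most one vertex, contradicting the minimality of $\frac{m-1}{2}$) is genuinely different from the paper's argument, and its logical skeleton is sound, but as written it has a real gap at its central step. First, the claim that a valid $f$ exists outside one ``fully blocked'' configuration is not established: the candidates $u_{k-1}v_{(^i_{i+1})}$, $u_{k-1}v_iu_{k-2}v_i$, $u_{k-1}v_{i+1}u_{k-2}v_{i+1}$, $u_{k-1}v_{i-1}u_{k-2}v_{i-1}$, $u_{k-1}v_{i+2}u_{k-2}v_{i+2}$ can each be killed by saturated vertices you never mention, e.g.\ $u_{k-3}v_i$ or $u_{k-2}v_{i-1}$ blocks $u_{k-1}v_iu_{k-2}v_i$ even when $u_{k-2}v_i$ itself is free, and horizontal $M$-edges in row $k-1$ or $k-2$ near columns $i-1$ and $i+2$ block the shifted candidates; so your characterization of the blocked case (``$u_{k-2}v_i,u_{k-2}v_{i+1},u_{k-1}v_{i-1},u_{k-1}v_{i+2}$ saturated'') is not exhaustive and ``routine in most local configurations'' is doing all the work. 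Second, the resolution of the blocked case is only a hope: Lemma \ref{lema1} and Remark \ref{rema1} are statements about $G_{5,p}$ subgrids with $p\equiv 2 \bmod 4$ and a specific unsaturated mid-column vertex $u_3v_i$, and you do not exhibit such a subgrid or such a vertex in your ``window'' around columns $i-1,\dots,i+2$ of a $k$-row grid with $k\equiv 0\bmod 4$; without that, the inequality $|V_{st}(M)|<\frac{km}{2}$ is asserted, not derived. (The parity step showing the number of vertical bridges is odd is correct but is never used.)

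For comparison, the paper avoids all local case analysis by a global count: partition $H$ into $G_1$ induced by $U_1,\dots,U_{k-2}$ (so $k-2\equiv 2\bmod 4$, and by \cite{RMG1} $|V_{sb}(G_1)|=|V_{st}(G_1)|=\frac{(k-2)m+2}{2}$, i.e.\ $G_1$ has no slack) and $G_2$ induced by $U_{k-1},U_k$, so that $|V_{st}(G_2)|\le m-1$. A horizontal $M$-edge inside $U_k$ then leaves a saturable vertex of $U_{k-1}$ that cannot be saturated by any neighbour in $U_k$, $U_{k-1}$ or $U_{k-2}$ (the last because $G_1$ has no free saturable vertices), which forces $|V_{st}(H)|\le\frac{km-4}{2}<\frac{km}{2}$, a contradiction. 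If you want to salvage your exchange proof, you would need either a complete enumeration of the blocking patterns with an explicit replacement edge in each, or to import exactly this kind of no-slack counting for the top $k-2$ rows to rule out the blocked configurations — at which point the counting argument alone already finishes the proof.
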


\begin{proof}
 Induced by $\left\{U_1,U_2, \cdots, U_{k-2}\right\}$ and $\left\{U_{k-1}U_k\right\}$ respectively, let $G_1^{|m|}$ and $G_2^{|m|}$ be partitions of $H$ with $k-2 \equiv 2 \mod 4$. It can be seen that $|V_{st}(G_1^{|m|})|=|V_{sb}(G_1^{|m|})|=\frac{km-2m+2}{2}$. Since $|V_{st}{H}|=\frac{km}{2}$, then $|V_{st}(G_2^{|m|})| \leq m-1$. Now, let $G_3^{|m|}$ be a $G_{1,m}$ subgrid (a $P_m$ path) of $H$, induced by $U_{k}$. By the hypothesis, $U_k$ contains maximum of $\frac{m-1}{2}$ saturated vertices. Now, let $u_kv_i,u_kv_{i+1}$ be adjacent and saturated vertices of $G_3^{|m|}$. Then there are $\frac{m-5}{2}$ other saturated vertices on $G_3^{|m|}$. Without loss of generality, suppose that each of the remaining $\frac{m-5}{2}$ saturated vertices in $G_3^{|m|}$ is adjacent to some saturated vertex in $U_{k-1}$. Now, suppose $u_{k-1}v_j$ is a saturable vertex in $U_{k-1}$ and that $v \in V(H)$, such that $u_{k-1}v_jv \in M_1$. Now, $v \notin U_k$, since all the saturable vertices in  $U_k$ is saturated. Likewise, suppose $v \in U_{k-1}$ and then $u_{k-1}v_jv \in E(G_4^{|m|})$, where $G_4^{|m|}$ is a $G_{1,m}$ subgrid of $H$ induced by $U_{k-1}$. Then, clearly, at least one of $u_{k-1}v_j$ and $v$ is adjacent to a saturated vertex in $V_{st}(G_1^{|m|})$. Also, suppose that $v \in U_{k-2}$, since $|V_{sb}(G_1^{|m|})|=|V_{st}(G_1^{|m|})|$, then $|V_{st}(G_1^{|m|})|=|V_{st}(G_1^{|m|}+u_{k-1}u_j)|$. Hence, $v$ is a $FSV$ in $G_1^{m}$. Therefore, $|V_{st}{H}|\leq |V_{st}G_1^{|m|}|+|V_{st}G_2^{|m|}| \leq \frac{km-4}{2}$, which is a contradiction since $|V_{st}(H)|= \frac{km}{2}$, by \cite{RMG1}.

\end{proof}
\begin{rem}
The implication of Lemma \ref{lema10} is that for a grid $H' \subset H$, which is induced by $\left\{U_1,U_2, \cdots, U_{k-2}\right\} \subset V(H)$, $k-2 \equiv 2 \mod 4$, suppose $U_k$ contains the least possible saturated vertices, $\frac{m-1}{2}$, then $u_kv_2, u_kv_4,\cdots u_kv_{m-1}$ are saturated as shown in the example in Figure 3, for which $k=4$ and $m=7$.
\end{rem}
{\tiny{
\begin{center}
\pgfdeclarelayer{nodelayer}
\pgfdeclarelayer{edgelayer}
\pgfsetlayers{nodelayer,edgelayer}
\begin{tikzpicture}
	\begin{pgfonlayer}{nodelayer}
	
	\node [minimum size=0cm,]  at (-10,6.5) {Figure 3. A $G_{4,7}$ Grid with $MIM_G=11$};

		\node [minimum size=0cm,fill=black!,draw,circle] (1) at (-13,7) {};
		\node [minimum size=0cm,draw,circle] (2) at (-12,7) {};
		\node [minimum size=0cm,fill=black!,draw,circle] (3) at (-11,7) {};
		\node [minimum size=0cm,draw,circle] (4) at (-10,7) {};
		\node [minimum size=0cm,fill=black!,draw,circle] (5) at (-9,7) {};
		\node [minimum size=0cm,draw,circle] (6) at (-8,7) {};
		\node [minimum size=0cm,fill=black!,draw,circle] (7) at (-7,7) {};
		\node [minimum size=0cm,fill=black!,draw,circle] (11) at (-13,8) {};
		\node [minimum size=0cm,draw,circle] (12) at (-12,8) {};
		\node [minimum size=0cm,fill=black!,draw,circle] (13) at (-11,8) {};
		\node [minimum size=0cm,draw,circle] (14) at (-10,8) {};
		\node [minimum size=0cm,fill=black!,draw,circle] (15) at (-9,8) {};
		\node [minimum size=0cm,draw,circle] (16) at (-8,8) {};
		\node [minimum size=0cm,fill=black!,draw,circle] (17) at (-7,8) {};
	  \node [minimum size=0cm,draw,circle] (21) at (-13,9) {};
		\node [minimum size=0cm,fill=black!,draw,circle] (22) at (-12,9) {};
		\node [minimum size=0cm,draw,circle] (23) at (-11,9) {};
		\node [minimum size=0cm,fill=black!,draw,circle] (24) at (-10,9) {};
		\node [minimum size=0cm,draw,circle] (25) at (-9,9) {};
		\node [minimum size=0cm,fill=black!,draw,circle] (26) at (-8,9) {};
		\node [minimum size=0cm,draw,circle] (27) at (-7,9) {};
		\node [minimum size=0cm,draw,circle] (31) at (-13,10) {};
		\node [minimum size=0cm,fill=black!,draw,circle] (32) at (-12,10) {};
		\node [minimum size=0cm,draw,circle] (33) at (-11,10) {};
		\node [minimum size=0cm,fill=black!,draw,circle] (34) at (-10,10) {};
		\node [minimum size=0cm,draw,circle] (35) at (-9,10) {};
		\node [minimum size=0cm,fill=black!,draw,circle] (36) at (-8,10) {};
		\node [minimum size=0cm,draw,circle] (37) at (-7,10) {};

		\end{pgfonlayer}
	\begin{pgfonlayer}{edgelayer}
		\draw [thin=1.00] (1) to (2);
		\draw [thin=1.00] (2) to (3);
		\draw [thin=1.00] (3) to (4);
		\draw [thin=1.00] (4) to (5);
		\draw [thin=1.00] (5) to (6);
		\draw [thin=1.00] (6) to (7);
		\draw [thin=1.00] (11) to (12);
		\draw [thin=1.00] (12) to (13);
		\draw [thin=1.00] (13) to (14);
		\draw [thin=1.00] (14) to (15);
		\draw [thin=1.00] (15) to (16);
		\draw [thin=1.00] (16) to (17);
		\draw [thin=1.00] (21) to (22);
		\draw [thin=1.00] (22) to (23);
		\draw [thin=1.00] (23) to (24);
		\draw [thin=1.00] (24) to (25);
		\draw [thin=1.00] (25) to (26);
		\draw [thin=1.00] (26) to (27);
		
		\draw [thin=1.00] (31) to (32);
		\draw [thin=1.00] (32) to (33);
		\draw [thin=1.00] (33) to (34);
		\draw [thin=1.00] (34) to (35);
		\draw [thin=1.00] (35) to (36);
		\draw [thin=1.00] (36) to (37);

		\draw [very thick=1.00] (1) to (11);
		\draw [thin=1.00] (11) to (21);
		\draw [thin=1.00] (21) to (31);

		\draw [thin=1.00] (2) to (12);
		\draw [thin=1.00] (12) to(22);
		\draw [very thick=1.00] (22) to (32);

		\draw [very thick=1.00] (3) to (13);
		\draw [thin=1.00] (13) to (23);
		\draw [thin=1.00] (23) to (33);
		
		\draw [thin=1.00] (4) to (14);
		\draw [thin=1.00] (14) to (24);
		\draw [very thick=1.00] (24) to (34);
		
		\draw [very thick=1.00] (5) to (15);
		\draw [thin=1.00] (15) to (25);
		\draw [thin=1.00] (25) to (35);
		
		\draw [thin=1.00] (6) to  (16);
		\draw [thin=1.00] (16) to (26);
		\draw [very thick=1.00] (26) to (36);

		\draw [very thick=1.00] (7) to  (17);
		\draw [thin=1.00] (17) to (27);
		\draw [thin=1.00] (27) to (37);
		
		

	\end{pgfonlayer}
\end{tikzpicture}
\end{center}
}}

\begin{lem}\label{lema311} Let $G$ be a $G_{3,m}$ with an induced matching $M$ and $G^{|(9)|}$, induced by $\left\{V_i, V_{i+2}, \cdots, V_{i+8}\right\}$ be a $G_{3,9}$ subgrid of $G$. Suppose that $M'\subset M$ is an induced matching of $G^{|(9)|}$ such that $u_{(^1_2)}v_i,u_{(^1_2)}v_{i+8} \in M'$. No other edge $u_{^1_2}v_{i+t}, 1<t<i+7$ is contained in $M'$. Then for $G^{'|(9)|} \subset G^{|(9)|}$, defined as $G^{|(9)|}\backslash U_1$, $|V_{sb}(G^{'|(9)|})| \leq 8$.
\end{lem}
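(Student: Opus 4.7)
The plan is to identify what $M'$ forces on $G^{'|(9)|}$ and then to bound the additional saturation in the two-row strip by a column-packing argument. First I would note that the edges $u_{(^1_2)}v_i$ and $u_{(^1_2)}v_{i+8}$ of $M'$ saturate $u_2v_i$ and $u_2v_{i+8}$ in $G^{'|(9)|}$, while their four neighbors $u_3v_i, u_2v_{i+1}, u_2v_{i+7}, u_3v_{i+8}$ inside $G^{'|(9)|}$ all sit at distance one from a saturated vertex and are hence excluded from $V_{sb}(G^{'|(9)|})$. Let $H$ denote the induced subgraph of $G^{'|(9)|}$ on the remaining twelve vertices: a $P_5$ in row $2$ on $u_2v_{i+2}, \ldots, u_2v_{i+6}$, a $P_7$ in row $3$ on $u_3v_{i+1}, \ldots, u_3v_{i+7}$, joined by the five vertical edges $u_{(^2_3)}v_{i+j}$ for $2 \leq j \leq 6$.

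The heart of the proof is the packing claim that any induced matching of $H$ contains at most three edges. I would establish this by assigning to each candidate edge a column-block (width $1$ for a vertical, width $2$ for a horizontal) and invoking the induced-matching constraint: two same-row blocks must be separated by at least one empty column, while a short local analysis handles mixed placements of row-$2$ and row-$3$ horizontals with verticals. A routine count then bounds the number of edges by four, and equality forces four single-column blocks at columns $i+1, i+3, i+5, i+7$. Because $u_2v_{i+1}$ and $u_2v_{i+7}$ are absent from $H$, no vertical edge exists at column $i+1$ or $i+7$, and a short case check shows that every attempted fourth edge --- horizontal at the extremes, or a mixed column arrangement --- either reuses a removed vertex or forces adjacent endpoints with an existing edge. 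Consequently $H$ has at most three induced-matching edges, contributing at most six saturated vertices.

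Combining the six possible saturations in $H$ with the two corner saturations gives at most $2 + 6 = 8$ saturated vertices in $G^{'|(9)|}$. In the extremal realization, for instance by the triple $u_{(^2_3)}v_{i+2}, u_{(^2_3)}v_{i+4}, u_{(^2_3)}v_{i+6}$, every other vertex of $G^{'|(9)|}$ is adjacent to a saturated vertex, so no free saturable vertex remains and $|V_{sb}(G^{'|(9)|})| \leq 8$. The main obstacle is the exclusion of a fourth edge in $H$: the bare column-count already allows four blocks, so one must verify case by case that every candidate fourth edge implicitly requires one of the removed endpoint vertices $u_2v_{i+1}$ or $u_2v_{i+7}$ or else collides with an existing edge --- that is, the absence of the two row-$2$ extreme columns is precisely what kills the extremal packing.
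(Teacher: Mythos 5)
Your argument follows essentially the same route as the paper: after discarding the two saturated endpoints $u_2v_i,u_2v_{i+8}$ and their unsaturable neighbours, both proofs reduce to the same twelve-vertex two-row subgraph (the paper's $G_y$, your $H$) and show it contributes at most $6$, giving $6+2=8$. The only difference is that you actually supply a packing argument for the step the paper dismisses with ``$|V_{sb}(G_y)|$ can be seen to be at most $6$,'' so your proposal is, if anything, more detailed than the original.
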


\begin{proof} Let $G^{|(7)|}=G^{|(9)|} \backslash \left\{\left\{u_1v_{i+1}, u_iv_{i+2}, \cdots, u_1v_{i+7}\right\},V_i,V_{i+8}\right\}$. It can be seen that $G^{|(7)|}$ is a $G_{2,7}$ subgrid of $G^{|(9)|}$. Clearly also, $G^{|(7)|} \subset G^{'|(9)|}$. Since $u_{^1_2}v_i,u_{^1_2v_{i+8}} \in M'$, then, $u_2v_{i+1}$ and $u_2v_{i+7}$ can not be saturated. Let $G_y \subset G^{|(7)|}$ be subgraph of $G^{|(7)|}$, defined as $G^{|(7)|} \backslash \left\{u_2v_{i+1},u_2v_{i+7}\right\}$. Now, $|V(G_y)|=12$ and $|V_{sb}(G_y)|$ can be seen to be at most $6$. Thus $|V_{sb}(G^{'|(9)|})|=|V_{sb}(G_y)|+2=8$, since $u_2v_i$ and $u_2v_{i+8}$ are saturated in $M'$.
\end{proof}

\begin{rem} For $U_1 \subset G^{|(9)|}$ as defined in Lemma \ref{lema311}, $U_1$ contains at least $6$ saturated vertices, implying that $M'$ contains two edges whose four vertices are from $U_1$.
\end{rem}

\begin{cor} Let $G$ be a $G_{3,m}$ grid with $m \geq 11$ and $m \equiv 3 \mod 4$. If $M$ is a $MIM$ of G. Then $M'$ contains at least $2k'$ edges from $U_1$, where $m=8k'+3$ or $m=8k'+7$.
\end{cor}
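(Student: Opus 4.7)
The plan is to combine the remark immediately preceding this corollary, which pins down the maximum number of vertical ``top'' edges $u_{(^1_2)}v_k$ inside any $MIM$, with Lemma~\ref{lema311}, which forces many saturated vertices to accumulate on $U_1$ between two such edges that are exactly $8$ columns apart.

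First, I would invoke the structural results already proved. Let $S=\{k:u_{(^1_2)}v_k\in M\}$. The remark just before the corollary gives $|S|\le k'+2$ when $m=8k'+3$ and $|S|\le k'+3$ when $m=8k'+7$, and by Lemma~\ref{lema9} together with the remarks that follow, consecutive elements of $S$ must differ by exactly $8$ whenever $M$ is an $MIM$. In particular, once we fix the smallest element $i_{\min}\in S$, the set $S$ lies on the arithmetic progression $i_{\min}, i_{\min}+8, i_{\min}+16,\ldots$, and the earlier boundary constraints (namely that $i_{\min}-1$ and $m-\max S$ belong to $\{2,3,4,6\}$) force $|S|\ge k'+1$ in each of the two congruence cases; so $|S|-1\ge k'$.

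Next, I would partition the columns touched by $S$ into $|S|-1$ overlapping $9$-column blocks: for each pair of consecutive $s_t,s_{t+1}=s_t+8$ in $S$, let $G^{|(9)|}_t$ be the $G_{3,9}$-subgrid induced by $V_{s_t},V_{s_t+1},\ldots,V_{s_t+8}$. Lemma~\ref{lema311} applies to each $G^{|(9)|}_t$ with $M':=M\cap E(G^{|(9)|}_t)$, and the remark after that lemma gives $|V_{st}(G^{|(9)|}_t)\cap U_1|\ge 6$. Since the two ``boundary'' vertices $u_1v_{s_t}$ and $u_1v_{s_t+8}$ are already saturated by the top-vertical edges at $s_t,s_{t+1}$, the remaining $\ge 4$ saturated vertices of $U_1$ inside this block can only be saturated by edges of $M$ with both endpoints in $U_1$ (since the vertical edges $u_{(^1_2)}v_j$ with $s_t<j<s_t+8$ are forbidden by $s_{t+1}=s_t+8$). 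Hence at least two horizontal edges $u_1v_{(^j_{j+1})}\in M$ lie strictly inside each block $G^{|(9)|}_t$.

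Finally, I would observe that distinct blocks $G^{|(9)|}_t,G^{|(9)|}_{t+1}$ share only the single column $V_{s_{t+1}}$, so the two horizontal edges contributed by $G^{|(9)|}_t$ and the two contributed by $G^{|(9)|}_{t+1}$ are automatically disjoint (a horizontal edge in $U_1$ spans two consecutive columns and cannot be pinned to the shared boundary column). Summing over the $\ge k'$ blocks gives at least $2k'$ distinct edges of $M$ with both endpoints in $U_1$, which is the required count.

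The main obstacle will be the bookkeeping in the last step: one must rule out the possibility that the boundary pieces (the columns $V_1,\ldots,V_{i_{\min}-1}$ and $V_{\max S+1},\ldots,V_m$) somehow reduce the number of full $9$-column blocks below $k'$ in either of the two congruence cases $m=8k'+3$ and $m=8k'+7$. This is handled by combining the bounds $i_{\min}-1,\,m-\max S\in\{2,3,4,6\}$ from the earlier remark with the arithmetic $m=i_{\min}+8(|S|-1)+(m-\max S)$, which in both cases forces $|S|-1\ge k'$ and therefore produces the desired $2k'$ horizontal edges in $U_1$.
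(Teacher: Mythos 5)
Your per-block counting is the intended use of Lemma \ref{lema311} and the remark after it (the paper itself gives no explicit proof of this corollary), but the global structure you feed into it is not available. Everything proved up to this point about edges of the form $u_{(^1_2)}v_k$ goes in the \emph{opposite} direction: Lemma \ref{lema9} and the surrounding remarks exclude certain close configurations and yield an \emph{upper} bound ($k'+2$, resp.\ $k'+3$) on how many such edges an $MIM$ can contain, and the remark with the condition ``$i-1\in\{2,3,4,6\}$'' explicitly \emph{presupposes} that $u_{(^1_2)}v_{i+8n}\in M$ for every admissible $n$ rather than deriving that pattern. Nothing forces a $MIM$ to contain $k'+1$ such edges, nor consecutive ones to be exactly $8$ columns apart: the set $S$ could have gaps larger than $8$, or be empty altogether (a $MIM$ in which every saturated vertex of $U_1$ lies on a horizontal $U_1$-edge), and in those cases your decomposition into $G_{3,9}$ blocks produces fewer than $k'$ blocks, or none, and the argument collapses. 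To cover those cases you need a trade-off count that does not assume a lower bound on $|S|$ -- e.g.\ bound the saturated vertices outside $U_1$ by $|V_{sb}(G_{2,m})|$, conclude that $U_1$ carries many saturated vertices, and observe that each one not used by one of the at most $k'+2$ (resp.\ $k'+3$) vertical edges must lie on a horizontal $U_1$-edge.

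There is also a flaw in the final bookkeeping even if one grants your structural assumptions. For $m=8k'+3$, take $i_{\min}=5$ and $m-\max S=6$: both offsets lie in $\{2,3,4,6\}$, the spacing is exactly $8$, and yet $8(|S|-1)=m-i_{\min}-(m-\max S)=8k'-8$, so $|S|=k'$ and you get only $k'-1$ blocks, i.e.\ $2(k'-1)$ edges -- short of the claimed $2k'$ unless you additionally extract edges from the two boundary segments, which your proposal does not do. So the step ``the boundary constraints force $|S|-1\ge k'$ in each congruence case'' is false as stated for $m=8k'+3$.
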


{\tiny{
\begin{center}
\pgfdeclarelayer{nodelayer}
\pgfdeclarelayer{edgelayer}
\pgfsetlayers{nodelayer,edgelayer}
\begin{tikzpicture}
	\begin{pgfonlayer}{nodelayer}
	
	\node [minimum size=0cm,]  at (-6.5,6.5) {Figure 4. A $G \equiv G_{3,23}$ Grid with $MIM_G=17$};
	
	  \node [minimum size=0cm,draw,fill=black,circle] (1) at (-12,7) {};
		\node [minimum size=0cm,draw,circle] (2) at (-11.5,7) {};
		\node [minimum size=0cm,draw,fill=black,circle] (3) at (-11,7) {};
		\node [minimum size=0cm,draw,fill=black,circle] (4) at (-10.5,7) {};
		\node [minimum size=0cm,draw,circle] (5) at (-10,7) {};
		\node [minimum size=0cm,draw,fill=black,circle] (6) at (-9.5,7) {};
		\node [minimum size=0cm,draw,fill=black,circle] (7) at (-9,7) {};
		\node [minimum size=0cm,draw,circle] (8) at (-8.5,7) {};
		\node [minimum size=0cm,draw,fill=black,circle] (9) at (-8,7) {};
		\node [minimum size=0cm,draw,circle] (10) at (-7.5,7) {};
    \node [minimum size=0cm,draw,fill=black,circle] (11) at (-7,7) {};
		\node [minimum size=0cm,draw,fill=black,circle] (12) at (-6.5,7) {};
		\node [minimum size=0cm,draw,circle] (13) at (-6,7) {};
		\node [minimum size=0cm,draw,fill=black,circle] (14) at (-5.5,7) {};
		\node [minimum size=0cm,draw,fill=black,circle] (15) at (-5,7) {};
		\node [minimum size=0cm,draw,circle] (16) at (-4.5,7) {};
		\node [minimum size=0cm,draw,fill=black,circle] (17) at (-4,7) {};
		\node [minimum size=0cm,draw,circle] (18) at (-3.5,7) {};
		\node [minimum size=0cm,draw,circle] (19) at (-3,7) {};
		\node [minimum size=0cm,draw,fill=black,circle] (20) at (-2.5,7) {};
		\node [minimum size=0cm,draw,circle] (21) at (-2,7) {};
		\node [minimum size=0cm,draw,circle] (22) at (-1.5,7) {};
		\node [minimum size=0cm,draw,fill=black,circle] (23) at (-1,7) {};

		\node [minimum size=0cm,draw,fill=black,circle] (24) at (-12,8) {};
		\node [minimum size=0cm,draw,circle] (25) at (-11.5,8) {};
		\node [minimum size=0cm,draw,circle] (26) at (-11,8) {};
		\node [minimum size=0cm,draw,circle] (27) at (-10.5,8) {};
		\node [minimum size=0cm,draw,fill=black,circle] (28) at (-10,8) {};
		\node [minimum size=0cm,draw,circle] (29) at (-9.5,8) {};
		\node [minimum size=0cm,draw,circle] (30) at (-9,8) {};
		\node [minimum size=0cm,draw,circle] (31) at (-8.5,8) {};
		\node [minimum size=0cm,draw,fill=black,circle] (32) at (-8,8) {};
		\node [minimum size=0cm,draw,circle] (33) at (-7.5,8) {};
		\node [minimum size=0cm,draw,circle] (34) at (-7,8) {};
		\node [minimum size=0cm,draw,circle] (35) at (-6.5,8) {};
		\node [minimum size=0cm,draw,fill=black,circle] (36) at (-6,8) {};
		\node [minimum size=0cm,draw,circle] (37) at (-5.5,8) {};
		\node [minimum size=0cm,draw,circle] (38) at (-5,8) {};
		\node [minimum size=0cm,draw,circle] (39) at (-4.5,8) {};
		\node [minimum size=0cm,draw,fill=black,circle] (40) at (-4,8) {};
		\node [minimum size=0cm,draw,circle] (41) at (-3.5,8) {};
		\node [minimum size=0cm,draw,circle] (42) at (-3,8) {};
		\node [minimum size=0cm,draw,fill=black,circle] (43) at (-2.5,8) {};
		\node [minimum size=0cm,draw,circle] (44) at (-2,8) {};
		\node [minimum size=0cm,draw,circle] (45) at (-1.5,8) {};
		\node [minimum size=0cm,draw,fill=black,circle] (46) at (-1,8) {};

		\node [minimum size=0cm,draw,circle] (47) at (-12,9) {};
		\node [minimum size=0cm,draw,fill=black,circle] (48) at (-11.5,9) {};
		\node [minimum size=0cm,draw,fill=black,circle] (49) at (-11,9) {};
		\node [minimum size=0cm,draw,circle] (50) at (-10.5,9) {};
		\node [minimum size=0cm,draw,fill=black,circle] (51) at (-10,9) {};
		\node [minimum size=0cm,draw,circle] (52) at (-9.5,9) {};
		\node [minimum size=0cm,draw,fill=black,circle] (53) at (-9,9) {};
		\node [minimum size=0cm,draw,fill=black,circle] (54) at (-8.5,9) {};
		\node [minimum size=0cm,draw,circle] (55) at (-8,9) {};
		\node [minimum size=0cm,draw,fill=black,circle] (56) at (-7.5,9) {};

		\node [minimum size=0cm,draw,fill=black,circle] (57) at (-7,9) {};
		\node [minimum size=0cm,draw,circle] (58) at (-6.5,9) {};
		\node [minimum size=0cm,draw,fill=black,circle] (59) at (-6,9) {};
		\node [minimum size=0cm,draw,circle] (60) at (-5.5,9) {};
		\node [minimum size=0cm,draw,fill=black,circle] (61) at (-5,9) {};
		\node [minimum size=0cm,draw,fill=black,circle] (62) at (-4.5,9) {};
		\node [minimum size=0cm,draw,circle] (63) at (-4,9) {};
		\node [minimum size=0cm,draw,fill=black,circle] (64) at (-3.5,9) {};
		\node [minimum size=0cm,draw,fill=black,circle] (65) at (-3,9) {};
		\node [minimum size=0cm,draw,circle] (66) at (-2.5,9) {};
		\node [minimum size=0cm,draw,fill=black,circle] (67) at (-2,9) {};
		\node [minimum size=0cm,draw,fill=black,circle] (68) at (-1.5,9) {};
		\node [minimum size=0cm,draw,circle] (69) at (-1,9) {};

		\end{pgfonlayer}
		\begin{pgfonlayer}{edgelayer}
		\draw [thin=1.00] (1) to (2);
		\draw [thin=1.00] (2) to (3);
		\draw [very thick=1.00] (3) to (4);
		\draw [thin=1.00] (4) to (5);
		\draw [thin=1.00] (5) to (6);
		\draw [very thick=1.00] (6) to (7);
		\draw [thin=1.00] (7) to (8);
		\draw [thin=1.00] (8) to (9);
		\draw [thin=1.00] (9) to (10);
		\draw [thin=1.00] (10) to (11);
		\draw [very thick=1.00] (11) to (12);
		\draw [thin=1.00] (12) to (13);
		\draw [thin=1.00] (13) to (14);
		\draw [very thick=1.00] (14) to (15);
		\draw [thin=1.00] (15) to (16);
		\draw [thin=1.00] (16) to (17);
		\draw [thin=1.00] (17) to (18);
		\draw [thin=1.00] (18) to (19);
		\draw [thin=1.00] (19) to (20);
		\draw [thin=1.00] (20) to (21);
		\draw [thin=1.00] (21) to (22);
		\draw [thin=1.00] (22) to (23);

			\draw [thin=1.00] (24) to (25);
		\draw [thin=1.00] (25) to (26);
		\draw [thin=1.00] (26) to (27);
		\draw [thin=1.00] (27) to (28);
		\draw [thin=1.00] (28) to (29);
		\draw [thin=1.00] (29) to (30);
		\draw [thin=1.00] (30) to (31);
		\draw [thin=1.00] (31) to (32);
		\draw [thin=1.00] (32) to (33);
			\draw [thin=1.00] (33) to (34);
		\draw [thin=1.00] (34) to (35);
		\draw [thin=1.00] (35) to (36);
		\draw [thin=1.00] (36) to (37);
		\draw [thin=1.00] (37) to (38);
		\draw [thin=1.00] (38) to (39);
		\draw [thin=1.00] (39) to (40);
		\draw [thin=1.00] (40) to (41);
		\draw [thin=1.00] (41) to (42);
		\draw [thin=1.00] (42) to (43);
		\draw [thin=1.00] (43) to (44);
		\draw [thin=1.00] (44) to (45);
		\draw [thin=1.00] (45) to (46);
		
			\draw [thin=1.00] (47) to (48);
		\draw [very thick=1.00] (48) to (49);
		\draw [thin=1.00] (49) to (50);
		\draw [thin=1.00] (50) to (51);
		\draw [thin=1.00] (51) to (52);
		\draw [thin=1.00] (52) to (53);
		\draw [very thick=1.00] (53) to (54);
		\draw [thin=1.00] (54) to (55);
		\draw [thin=1.00] (55) to (56);
		\draw [very thick=1.00] (56) to (57);
		\draw [thin=1.00] (57) to (58);
		\draw [thin=1.00] (58) to (59);
		\draw [thin=1.00] (59) to (60);
		\draw [thin=1.00] (60) to (61);
		\draw [very thick=1.00] (61) to (62);
		\draw [thin=1.00] (62) to (63);
		\draw [thin=1.00] (63) to (64);
		\draw [very thick=1.00] (64) to (65);
		\draw [thin=1.00] (65) to (66);
		\draw [thin=1.00] (66) to (67);
		\draw [very thick=1.00] (67) to (68);
		\draw [thin=1.00] (68) to (69);

		\draw [very thick=1.00] (1) to (24);
		\draw [thin=1.00] (24) to (47);

		\draw [thin=1.00] (2) to (25);
		\draw [thin=1.00] (25) to (48);

		\draw [thin=1.00] (3) to (26);
		\draw [thin=1.00] (26) to (49);

		\draw [thin=1.00] (4) to (27);
		\draw [thin=1.00] (27) to (50);

		\draw [thin=1.00] (5) to (28);
		\draw [very thick=1.00] (28) to (51);

		\draw [thin=1.00] (6) to (29);
		\draw [thin=1.00] (29) to (52);

		\draw [thin=1.00] (7) to (30);
		\draw [thin=1.00] (30) to (53);

		\draw [thin=1.00] (8) to (31);
		\draw [thin=1.00] (31) to (54);

			\draw [very thick=1.00] (9) to (32);
		\draw [thin=1.00] (32) to (55);

		\draw [thin=1.00] (10) to (33);
		\draw [thin=1.00] (33) to (56);

		\draw [thin=1.00] (11) to (34);
		\draw [thin=1.00] (34) to (57);

		\draw [thin=1.00] (12) to (35);
		\draw [thin=1.00] (35) to (58);

		\draw [thin=1.00] (13) to (36);
		\draw [very thick=1.00] (36) to (59);

		\draw [thin=1.00] (14) to (37);
		\draw [thin=1.00] (37) to (60);

		\draw [thin=1.00] (15) to (38);
		\draw [thin=1.00] (38) to (61);

		\draw [thin=1.00] (16) to (39);
		\draw [thin=1.00] (39) to (62);

	  \draw [very thick=1.00] (17) to (40);
		\draw [thin=1.00] (40) to (63);

		\draw [thin=1.00] (18) to (41);
		\draw [thin=1.00] (41) to (64);

		\draw [thin=1.00] (19) to (42);
		\draw [thin=1.00] (42) to (65);

		\draw [very thick=1.00] (20) to (43);
		\draw [thin=1.00] (43) to (66);

		\draw [thin=1.00] (21) to (44);
		\draw [thin=1.00] (44) to (67);

		\draw [thin=1.00] (22) to (45);
		\draw [thin=1.00] (45) to (68);

		\draw [very thick=1.00] (23) to (46);
		\draw [thin=1.00] (46) to (69);

\end{pgfonlayer}
\end{tikzpicture}
\end{center}
}}

{\tiny{
\begin{center}
\pgfdeclarelayer{nodelayer}
\pgfdeclarelayer{edgelayer}
\pgfsetlayers{nodelayer,edgelayer}
\begin{tikzpicture}
	\begin{pgfonlayer}{nodelayer}
	
	\node [minimum size=0cm,]  at (-7.5,6.5) {Figure 5. A $G \equiv G_{3,14}$ Grid with $MIM_G=14$};
	
	  \node [minimum size=0cm,draw,fill=black,circle] (1) at (-12,7) {};
		\node [minimum size=0cm,draw,circle] (2) at (-11.5,7) {};
		\node [minimum size=0cm,draw,fill=black,circle] (3) at (-11,7) {};
		\node [minimum size=0cm,draw,fill=black,circle] (4) at (-10.5,7) {};
		\node [minimum size=0cm,draw,circle] (5) at (-10,7) {};
		\node [minimum size=0cm,draw,fill=black,circle] (6) at (-9.5,7) {};
		\node [minimum size=0cm,draw,fill=black,circle] (7) at (-9,7) {};
		\node [minimum size=0cm,draw,circle] (8) at (-8.5,7) {};
		\node [minimum size=0cm,draw,fill=black,circle] (9) at (-8,7) {};
		\node [minimum size=0cm,draw,circle] (10) at (-7.5,7) {};
    \node [minimum size=0cm,draw,fill=black,circle] (11) at (-7,7) {};
		\node [minimum size=0cm,draw,fill=black,circle] (12) at (-6.5,7) {};
		\node [minimum size=0cm,draw,circle] (13) at (-6,7) {};
		\node [minimum size=0cm,draw,fill=black,circle] (14) at (-5.5,7) {};
		\node [minimum size=0cm,draw,fill=black,circle] (15) at (-5,7) {};
		\node [minimum size=0cm,draw,circle] (16) at (-4.5,7) {};
		\node [minimum size=0cm,draw,fill=black,circle] (17) at (-4,7) {};
		\node [minimum size=0cm,draw,circle] (18) at (-3.5,7) {};
		\node [minimum size=0cm,draw,fill=black,circle] (19) at (-3,7) {};

		\node [minimum size=0cm,draw,fill=black,circle] (24) at (-12,8) {};
		\node [minimum size=0cm,draw,circle] (25) at (-11.5,8) {};
		\node [minimum size=0cm,draw,circle] (26) at (-11,8) {};
		\node [minimum size=0cm,draw,circle] (27) at (-10.5,8) {};
		\node [minimum size=0cm,draw,fill=black,circle] (28) at (-10,8) {};
		\node [minimum size=0cm,draw,circle] (29) at (-9.5,8) {};
		\node [minimum size=0cm,draw,circle] (30) at (-9,8) {};
		\node [minimum size=0cm,draw,circle] (31) at (-8.5,8) {};
		\node [minimum size=0cm,draw,fill=black,circle] (32) at (-8,8) {};
		\node [minimum size=0cm,draw,circle] (33) at (-7.5,8) {};
		\node [minimum size=0cm,draw,circle] (34) at (-7,8) {};
		\node [minimum size=0cm,draw,circle] (35) at (-6.5,8) {};
		\node [minimum size=0cm,draw,fill=black,circle] (36) at (-6,8) {};
		\node [minimum size=0cm,draw,circle] (37) at (-5.5,8) {};
		\node [minimum size=0cm,draw,circle] (38) at (-5,8) {};
		\node [minimum size=0cm,draw,circle] (39) at (-4.5,8) {};
		\node [minimum size=0cm,draw,fill=black,circle] (40) at (-4,8) {};
		\node [minimum size=0cm,draw,circle] (41) at (-3.5,8) {};
		\node [minimum size=0cm,draw,fill=black,circle] (42) at (-3,8) {};

		\node [minimum size=0cm,draw,circle] (47) at (-12,9) {};
		\node [minimum size=0cm,draw,fill=black,circle] (48) at (-11.5,9) {};
		\node [minimum size=0cm,draw,fill=black,circle] (49) at (-11,9) {};
		\node [minimum size=0cm,draw,circle] (50) at (-10.5,9) {};
		\node [minimum size=0cm,draw,fill=black,circle] (51) at (-10,9) {};
		\node [minimum size=0cm,draw,circle] (52) at (-9.5,9) {};
		\node [minimum size=0cm,draw,fill=black,circle] (53) at (-9,9) {};
		\node [minimum size=0cm,draw,fill=black,circle] (54) at (-8.5,9) {};
		\node [minimum size=0cm,draw,circle] (55) at (-8,9) {};
		\node [minimum size=0cm,draw,fill=black,circle] (56) at (-7.5,9) {};

		\node [minimum size=0cm,draw,fill=black,circle] (57) at (-7,9) {};
		\node [minimum size=0cm,draw,circle] (58) at (-6.5,9) {};
		\node [minimum size=0cm,draw,fill=black,circle] (59) at (-6,9) {};
		\node [minimum size=0cm,draw,circle] (60) at (-5.5,9) {};
		\node [minimum size=0cm,draw,fill=black,circle] (61) at (-5,9) {};
		\node [minimum size=0cm,draw,fill=black,circle] (62) at (-4.5,9) {};
		\node [minimum size=0cm,draw,circle] (63) at (-4,9) {};
		\node [minimum size=0cm,draw,circle] (64) at (-3.5,9) {};
		\node [minimum size=0cm,draw,circle] (65) at (-3,9) {};

		\end{pgfonlayer}
		\begin{pgfonlayer}{edgelayer}
		\draw [thin=1.00] (1) to (2);
		\draw [thin=1.00] (2) to (3);
		\draw [very thick=1.00] (3) to (4);
		\draw [thin=1.00] (4) to (5);
		\draw [thin=1.00] (5) to (6);
		\draw [very thick=1.00] (6) to (7);
		\draw [thin=1.00] (7) to (8);
		\draw [thin=1.00] (8) to (9);
		\draw [thin=1.00] (9) to (10);
		\draw [thin=1.00] (10) to (11);
		\draw [very thick=1.00] (11) to (12);
		\draw [thin=1.00] (12) to (13);
		\draw [thin=1.00] (13) to (14);
		\draw [very thick=1.00] (14) to (15);
		\draw [thin=1.00] (15) to (16);
		\draw [thin=1.00] (16) to (17);
		\draw [thin=1.00] (17) to (18);
		\draw [thin=1.00] (18) to (19);

			\draw [thin=1.00] (24) to (25);
		\draw [thin=1.00] (25) to (26);
		\draw [thin=1.00] (26) to (27);
		\draw [thin=1.00] (27) to (28);
		\draw [thin=1.00] (28) to (29);
		\draw [thin=1.00] (29) to (30);
		\draw [thin=1.00] (30) to (31);
		\draw [thin=1.00] (31) to (32);
		\draw [thin=1.00] (32) to (33);
			\draw [thin=1.00] (33) to (34);
		\draw [thin=1.00] (34) to (35);
		\draw [thin=1.00] (35) to (36);
		\draw [thin=1.00] (36) to (37);
		\draw [thin=1.00] (37) to (38);
		\draw [thin=1.00] (38) to (39);
		\draw [thin=1.00] (39) to (40);
		\draw [thin=1.00] (40) to (41);
		\draw [thin=1.00] (41) to (42);

			\draw [thin=1.00] (47) to (48);
		\draw [very thick=1.00] (48) to (49);
		\draw [thin=1.00] (49) to (50);
		\draw [thin=1.00] (50) to (51);
		\draw [thin=1.00] (51) to (52);
		\draw [thin=1.00] (52) to (53);
		\draw [very thick=1.00] (53) to (54);
		\draw [thin=1.00] (54) to (55);
		\draw [thin=1.00] (55) to (56);
		\draw [very thick=1.00] (56) to (57);
		\draw [thin=1.00] (57) to (58);
		\draw [thin=1.00] (58) to (59);
		\draw [thin=1.00] (59) to (60);
		\draw [thin=1.00] (60) to (61);
		\draw [very thick=1.00] (61) to (62);
		\draw [thin=1.00] (62) to (63);
		\draw [thin=1.00] (63) to (64);
		\draw [very thick=1.00] (64) to (65);

		\draw [very thick=1.00] (1) to (24);
		\draw [thin=1.00] (24) to (47);

		\draw [thin=1.00] (2) to (25);
		\draw [thin=1.00] (25) to (48);

		\draw [thin=1.00] (3) to (26);
		\draw [thin=1.00] (26) to (49);

		\draw [thin=1.00] (4) to (27);
		\draw [thin=1.00] (27) to (50);

		\draw [thin=1.00] (5) to (28);
		\draw [very thick=1.00] (28) to (51);

		\draw [thin=1.00] (6) to (29);
		\draw [thin=1.00] (29) to (52);

		\draw [thin=1.00] (7) to (30);
		\draw [thin=1.00] (30) to (53);

		\draw [thin=1.00] (8) to (31);
		\draw [thin=1.00] (31) to (54);

			\draw [very thick=1.00] (9) to (32);
		\draw [thin=1.00] (32) to (55);

		\draw [thin=1.00] (10) to (33);
		\draw [thin=1.00] (33) to (56);

		\draw [thin=1.00] (11) to (34);
		\draw [thin=1.00] (34) to (57);

		\draw [thin=1.00] (12) to (35);
		\draw [thin=1.00] (35) to (58);

		\draw [thin=1.00] (13) to (36);
		\draw [very thick=1.00] (36) to (59);

		\draw [thin=1.00] (14) to (37);
		\draw [thin=1.00] (37) to (60);

		\draw [thin=1.00] (15) to (38);
		\draw [thin=1.00] (38) to (61);

		\draw [thin=1.00] (16) to (39);
		\draw [thin=1.00] (39) to (62);

	  \draw [very thick=1.00] (17) to (40);
		\draw [thin=1.00] (40) to (63);

		\draw [thin=1.00] (18) to (41);
		\draw [thin=1.00] (41) to (64);

		\draw [very thick=1.00] (19) to (42);
		\draw [thin=1.00] (42) to (65);

\end{pgfonlayer}
\end{tikzpicture}
\end{center}
}}

\begin{thm}\label{thm5} Let $G$ be a $G_{n,m}$ grid, with $m \geq 23$. Then for $n \equiv 1 \;mod\; 4$, $MIM_G \leq \left\lfloor \frac{2mn-m-3}{8}\right\rfloor$.
\end{thm}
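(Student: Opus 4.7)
The plan is to recast the claimed inequality as a statement about extending a $G_{4s,m}$ grid by a single additional row. Writing $n = 4s + 1$, the bound $\lfloor (2mn - m - 3)/8 \rfloor$ simplifies to $sm + \lfloor (m-3)/8 \rfloor$, and by Theorem \ref{thm2.2}(a) we have $MIM_{G_{4s,m}} = sm$ exactly. So the statement is equivalent to
\[
MIM_G \;\le\; MIM_{G_{n-1,m}} + \left\lfloor \frac{m-3}{8} \right\rfloor,
\]
which says that adjoining one extra row to the grid $G_{n-1,m}$ (where $n-1 = 4s \equiv 0\mod 4$) contributes at most $\lfloor (m-3)/8 \rfloor$ new edges to any MIM.

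First, I would fix an MIM $M$ of $G = G_{n,m}$ and split $M = M_0 \sqcup M_1$, where $M_0$ is the set of edges of $M$ entirely inside the subgrid $H = G_{n-1,m}$ induced by rows $U_2, \ldots, U_n$, and $M_1$ is the set of edges of $M$ with at least one endpoint in the top row $U_1$. Since $M_0$ is an induced matching of $H$, we have $|M_0| \le MIM_H = sm$, so it remains to show $|M_1| \le \lfloor (m-3)/8 \rfloor$.

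Second, I would classify each edge of $M_1$ as either a horizontal edge inside $U_1$ or a vertical edge of the form $u_{(^1_2)}v_j$. For the vertical edges, Corollary \ref{coro1} forces $j \equiv 0 \mod 4$, and the remark following Lemma \ref{lema6} further restricts these edges to the extremal configurations $\{u_{(^1_2)}v_1, u_{(^1_2)}v_4\}$ or $\{u_{(^1_2)}v_{m-3}, u_{(^1_2)}v_m\}$ together with at most one interior edge. For horizontal edges in $U_1$, I would work in the $G_{3,m}$ subgrid induced by $\{U_1, U_2, U_3\}$ and adapt Lemma \ref{lema9} together with the $G_{3,m}$ corollary preceding the theorem to force such edges to be separated by gaps of at least eight columns. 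Writing $m = 4k+3$, combining these position constraints by a direct counting argument gives $|M_1| \le \lfloor k/2 \rfloor = \lfloor (m-3)/8 \rfloor$.

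The main obstacle is the second step. Corollary \ref{coro1} and Lemmas \ref{lema2}--\ref{lema6} were originally established for MIMs of $G_{5,m}$, using saturation counts of $G_{5,p}$ subgrids to force a contradiction. When applied to the top five rows of $G_{n,m}$ for $n \ge 9$, the restriction of $M$ to the $G_{5,m}$ subgrid is only an induced matching, not necessarily an MIM, so the saturation-count contradictions from those lemmas do not transfer verbatim. I expect this step will require re-running the $G_{5,m}$-style arguments inside the ambient $G_{n,m}$ by tracking the deficit $sm - |M_0|$ against the gain $|M_1|$: whenever a configuration on $U_1 \cup U_2$ would have blocked an MIM in the isolated $G_{5,m}$ subgrid, the same configuration must here produce a corresponding loss in $|M_0|$ via the exactness of $MIM_{G_{4s,m}}$ in Theorem \ref{thm2.2}(a), so the total $|M_0| + |M_1|$ cannot exceed $sm + \lfloor (m-3)/8 \rfloor$.
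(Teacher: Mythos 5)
Your opening reduction is fine as arithmetic: with $n=4s+1$ the claimed bound equals $sm+\left\lfloor\frac{m-3}{8}\right\rfloor$, and $|M_0|\leq MIM_{G_{n-1,m}}=sm$ follows from Theorem \ref{thm2.2}. But the decomposition you then rely on does not work, and the step you defer is precisely the content of the theorem. The standalone claim ``$|M_1|\leq\left\lfloor\frac{m-3}{8}\right\rfloor$ for every $MIM$ $M$'' is false: a maximum induced matching of a grid typically meets a boundary row in on the order of $m/4$ edges (the boundary rows in Figures 1, 2 and 4 each carry about $m/4$ matching edges), and this is compensated by $|M_0|$ falling short of $sm$; only the sum $|M_0|+|M_1|$ obeys the bound. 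So the real task is the coupled inequality $|M_1|\leq\left\lfloor\frac{m-3}{8}\right\rfloor+\left(sm-|M_0|\right)$, which you acknowledge (``tracking the deficit against the gain'') but do not prove. Moreover, the tools you propose for the position constraints --- Corollary \ref{coro1}, the remark after Lemma \ref{lema6}, Lemma \ref{lema9} --- are statements about maximum induced matchings of $G_{5,m}$ and $G_{3,m}$; as you yourself point out, the restriction of $M$ to the top few rows of $G_{n,m}$ need not be an $MIM$ of that subgrid, so none of these apply directly, and no substitute argument is supplied. As written, the proposal establishes only $|M_0|\leq sm$ and leaves the theorem unproven.

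For comparison, the paper does not peel off a single row. It cuts $G$ along rows into $G_1$, induced by $U_1,\dots,U_{n-5}$ (an $(n-5)\times m$ grid with $n-5\equiv 0 \bmod 4$), and $G_2$, the $G_{5,m}$ formed by the last five rows, and it counts saturated vertices rather than edges. Lemma \ref{lema10} (with the observation preceding it) forces at least $\frac{m-1}{2}$ saturated vertices on the interface row $U_{n-5}$ and forbids matching edges lying inside that row, while Theorem \ref{thm1} forces at least $2k+2=\frac{m+1}{2}$ saturated vertices on the adjacent row $U_{n-4}$ of the $G_{5,m}$ part; the adjacencies between these two saturated sets across the interface then force a loss relative to $|V_{st}(G_1)|+|V_{st}(G_2)|$, which is what produces the subtracted $m$-term in the bound. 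If you want to keep your single-row peeling, you would need to prove the trade-off inequality above, and that is not easier than the interface-counting argument the paper carries out between rows $U_{n-5}$ and $U_{n-4}$; in either formulation the crucial interaction estimate is missing from your proposal.
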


\begin{proof} For $n \equiv 1 \mod 4$, $n-5 \equiv 0 \mod 4$. Let $G_1$ and $G_2$ be partitions of $G$ with $G_1$ induced by $\left\{U_i,U_2, \cdots, V_{n-5}\right\}$ and $\left\{V_{n-4}, V_{n-3}, V_{n-2}, V_{n-1} V_n\right\}$ respectively. Also, let $M',M''$ be $MIM$ of $G_1$ and $G_2$ respectively.
Suppose, $U_{n-5}$ contains at least $\frac{m-1}{2}$ saturated vertices, the least $U_{n-5}$ can contain for $M'$ to remain $MIM$ of $G_1$.
By Theorem \ref{thm1}, $U_1 \subset G_2$ (the $U_{n-4}$ of $G$) contains at least $2k+2$ saturated vertices with $k=\frac{m-3}{4}$. Following the proof of Theorem \ref{thm1}, it is shown that $M''$ contains $\frac{m-3}{4}$ edges of $U_1 \subset G_2$ and either of $u_{(^1_2)}v_4$ and $u_{(^1_2)}v_{m-3}$. Now, with $G=G' \cup G''$ and hence, $|M| \leq |M'|+|M''|$, it is obvious therefore, that for each edge $u_{\alpha}u_{\beta} \in U_{n-4}$ contained in $M''$, either $u_{\alpha}$ or $u_{\beta}$ is adjacent to a saturated vertex on $U_{n-5}$ and also, $u_{n-4}v_4$ (or $u_{n-4}v_{m-3}$) is is adjacent to saturated $u_{n-5}v_4$ (or to saturated $u_{n-4}v_{m-3}$). Hence, $|V_{st}(G)| \leq \frac{2mn-m-7}{4}$ and thus, $MIM_G \leq \left\lfloor \frac{2mn-m-7}{8}\right\rfloor$.
\end{proof}

\begin{thm} Let $G$ be a $G_{n,m}$ grid with $n\equiv 3 \mod 4$ and $m \equiv 3 \mod 4$, $m \geq 11$. Then $MIM_G  \leq \left\lfloor \frac{2mn-m+1}{8}\right\rfloor$ and  $MIM_G  \leq \left\lfloor \frac{2mn-m+5}{8}\right\rfloor$ for $m=8k'+3$ and $m=8k'+7$ respectively.
\end{thm}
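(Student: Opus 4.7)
The plan is to mirror the strategy of Theorem \ref{thm5}, partitioning $G$ at a different row index. Let $G_1$ be the subgrid of $G$ induced by $\{U_1, U_2, \ldots, U_{n-3}\}$, and let $G_2$ be the subgrid induced by $\{U_{n-2}, U_{n-1}, U_n\}$. Since $n \equiv 3 \pmod{4}$, we have $n-3 \equiv 0 \pmod{4}$, so $G_1 \cong G_{n-3, m}$ satisfies the hypothesis of Theorem \ref{thm2.2}(a), yielding $MIM_{G_1} = (n-3)m/4$; while $G_2 \cong G_{3,m}$ with $m \equiv 3 \pmod{4}$ is covered by Theorem \ref{thm2.2}(b)(ii), giving $MIM_{G_2} = (3m-1)/4$, which equals $6k'+2$ when $m = 8k'+3$ and $6k'+5$ when $m = 8k'+7$.

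Given any MIM $M$ of $G$, I decompose $M = M_1 \cup M_2 \cup M_c$, where $M_1 \subseteq E(G_1)$, $M_2 \subseteq E(G_2)$, and $M_c$ consists of cross edges between $U_{n-3}$ and $U_{n-2}$. The naive bound $|M_1| + |M_2| \leq MIM_{G_1} + MIM_{G_2}$ simplifies to $(nm-1)/4$, which is exactly $k'$ larger than the target: direct algebra confirms $(nm-1)/4 - k' = (2nm - m + 1)/8$ when $m = 8k'+3$, and $(nm-1)/4 - k' = (2nm - m + 5)/8$ when $m = 8k'+7$. Hence the central task is to save $k'$ edges by exploiting the boundary interaction between $G_1$ and $G_2$. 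The key lever is the preceding corollary: whenever $M_2$ attains $MIM_{G_2}$, it contains at least $2k'$ horizontal edges lying in $U_1$ of $G_2$, namely in row $U_{n-2}$ of $G$. Each such edge $u_{n-2}v_j u_{n-2}v_{j+1} \in M_2$ forces both $u_{n-3}v_j$ and $u_{n-3}v_{j+1}$ to remain unsaturated in $M$ and forbids the cross edges $u_{n-3}v_j u_{n-2}v_j$ and $u_{n-3}v_{j+1} u_{n-2}v_{j+1}$ from appearing in $M_c$. Extending the counting approach of Theorems \ref{thm1} and \ref{thm5}, these $2k'$ blocked adjacent pairs in $U_{n-3}$ together force a reduction of at least $k'$ edges from the unconstrained $MIM_{G_1}$, yielding $|M| \leq (nm-1)/4 - k'$.

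The principal obstacle I anticipate is making this final reduction precise. By Lemma \ref{lem2.1}(a) together with Theorem \ref{thm2.2}(a), the MIM of $G_1 = G_{n-3, m}$ saturates every vertex of $V_{sb}(G_1)$, whose size is $(n-3)m/2$, so each blocked vertex in $U_{n-3}$ costs a saturation in principle. The claim that only $k'$ edges are lost (rather than $2k'$) should follow from the flexibility of the MIM structure of $G_{n-3, m}$: each blocked pair of adjacent vertices in $U_{n-3}$ collectively costs only one edge once the vertical incidences between $U_{n-4}$ and $U_{n-3}$ are re-routed. The additional boundary edge $u_{(^1_2)}v_4$ or $u_{(^1_2)}v_{m-3}$ of $M_2$, identified in Theorem \ref{thm1}, then supplies the residual constant that distinguishes the $m = 8k'+3$ and $m = 8k'+7$ cases, producing $+1$ in one and $+5$ in the other after rewriting in the form $\lfloor \cdot / 8 \rfloor$.
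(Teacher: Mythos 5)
Your proposal follows essentially the same route the paper intends: the paper's proof is literally the one-line remark that it "follows similar technique as in Theorem \ref{thm5}," i.e.\ partition $G$ into a $G_{n-3,m}$ grid ($n-3\equiv 0 \bmod 4$) and a bottom $G_{3,m}$ strip and invoke the corollary that an $MIM$ of $G_{3,m}$ uses at least $2k'$ edges lying in its boundary row, which is exactly your decomposition, and your arithmetic relating the naive bound $(nm-1)/4$ minus $k'$ to the stated floors is correct. Your final step (that the $2k'$ blocked adjacent pairs in $U_{n-3}$ cost at least $k'$ edges of $MIM_{G_1}$) is asserted rather than fully argued, but this is at the same level of detail as the paper's own treatment.
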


\begin{proof} The proof follows similar technique as in Theorem \ref{thm5}.
\end{proof}

\end{document}